\documentclass[12pt, reqno, twoside]{amsart}
\usepackage{amsfonts,amsthm,amsmath,amssymb,amscd}
\usepackage{mathabx,mathrsfs} 
\usepackage{mathtools}
\usepackage{graphicx}
\usepackage{indentfirst}
\usepackage[T1]{fontenc}
\usepackage{microtype}
\usepackage{libertine} %
\usepackage{enumitem}
\usepackage{cases}

\usepackage[colorlinks=true, citecolor=blue, linkcolor=blue, urlcolor=blue]{hyperref}
\date{}
\allowdisplaybreaks[2]
\numberwithin{equation}{section}

\usepackage[a4paper, margin=1.25in, bottom=1.1in]{geometry}
\usepackage{fancyhdr}

\fancypagestyle{firstpage}{
  \fancyhf{}
}

\makeatletter
\renewenvironment{abstract}{%
  \global\setbox\abstractbox=\vbox\bgroup
    \normalfont\small
    \list{}{\labelwidth\z@
      \leftmargin\z@ \rightmargin\z@
      \listparindent\normalparindent \itemindent\z@
      \parsep\z@ \@plus\p@
    }%
    \item[\hskip\labelsep\bfseries Abstract.] 
}{%
  \endlist\egroup
}

\renewcommand{\section}{\@startsection{section}{1}{\z@}%
  {1.5\linespacing\@plus\linespacing}{0.8\linespacing}%
  {\normalfont\large\bfseries}}

\renewcommand{\subsection}{\@startsection{subsection}{2}{\z@}%
  {.7\linespacing\@plus\linespacing}{.4\linespacing}%
  {\normalfont\normalsize\bfseries}}

\newcommand{\norm}[2]{\Arrowvert #1 \Arrowvert_{#2}}
\newcommand{\Lnorm}[1]{L^{#1}}
\newcommand{\abs}[1]{\left\lvert #1 \right\rvert}

\newcommand{\propref}[1]{ \hyperref[#1]{Proposition \ref*{#1}}}
\newcommand{\lemref}[1]{ \hyperref[#1]{Lemma \ref*{#1}}}
\newcommand{\defref}[1]{ \hyperref[#1]{Definition \ref*{#1}}}
\newcommand{\cororef}[1]{ \hyperref[#1]{Corollary \ref*{#1}}}
\newcommand{\rmkref}[1]{ \hyperref[#1]{Remark \ref*{#1}}}
\newcommand{\secref}[1]{ \hyperref[#1]{Section \ref*{#1}}}

\newcommand{\rhobar}{\bar{\rho}}

\newcommand{\mubar}{\bar{\mu}}

\newcommand{\Qfrak}{\mathfrak{Q}}

\newtheorem{theorem}{Theorem}[section]

\newtheorem{lemma}[theorem]{Lemma}
\newtheorem{proposition}[theorem]{Proposition}
\newtheorem{remark}{Remark}[section] 
\newtheorem{definition}{Definition}[section]

\begin{document}

\title[NS expanding solutions]{Expanding solutions to the compressible Navier--Stokes equations with degenerate viscosities in spherical symmetry: global existence and inviscid limit}
	
	\author{Shuying~Hu}
	\address[S. Hu]{The Institute of Mathematical Sciences, The Chinese University of Hong Kong, Hong Kong, China.}
	\email{syhu@link.cuhk.edu.hk}
	
	\author{Zhouping Xin}
	\address[Z. Xin]{The Institute of Mathematical Sciences, The Chinese University of Hong Kong, Hong Kong, China.}
	\email{zpxin@ims.cuhk.edu.hk}
	
	\author{Yuan~Yuan
    }
	\address[Y. Yuan]{School of Mathematical Sciences, South China Normal University, Guangzhou 510631, China. }
	\email{{yyuan2102@m.scnu.edu.cn}}

\begin{abstract}
	This paper is devoted to studying the strong solutions to the vacuum free boundary problem for the isentropic compressible Navier--Stokes equations with density-dependent viscosities under spherical symmetry, which models the motions of isentropic compressible viscous flows surrounded by vacuum. We construct a class of expanding global solutions when the initial data is a small perturbation of the expanding affine solutions for the adiabatic exponent $\gamma>1$ and the viscosity coefficients proportional to $\rho^{\gamma}$, with $\rho$ being the fluid density. In addition, when the viscosity coefficients tend to zero, the perturbed solutions are proved to converge to the solutions to the compressible Euler equations with an explicit converging rate of viscosity coefficients.
    
    This extends the results of Had\v{z}i\'c--Jang [Invent. Math. 214 (2018), 1205--1266] and Shkoller--Sideris [Arch. Ration. Mech. Anal. 234 (2019), 115--180] from the three-dimensional inviscid flows to the three-dimensional spherically symmetric viscous flows with a class of viscosities degenerate at vacuum states. 
    Due to the strong degeneracy of the viscosities, the ellipticity due to the viscosity is weak. Therefore, we apply normal derivative estimates to the equations, as in the works of Had\v{z}i\'c--Jang and Shkoller--Sideris. Nevertheless, for spherically symmetric flows, singularities arise at the origin, which lead to additional difficulties. Taking standard normal derivatives will produce some uncontrollable terms, thereby preventing the energy from closing. To handle these obstacles, we introduce a non-trivial differential operator and a proper test function and then establish uniform a priori estimates independent of viscosities, from which the global-in-time existence and inviscid limit are proved. This yields not only the global existence of the solutions to the isentropic Navier--Stokes equations with initial data which are generic small perturbations of the expanding affine solution, but also an alternative proof of the corresponding result for the inviscid Euler equations.
\end{abstract}

\keywords{Compressible Navier--Stokes equations; affine solutions; global existence; inviscid limit}

\subjclass[2010]{35Q30, 76N10}

\maketitle

\tableofcontents
\section{Introduction}
	In this work, we investigate the global existence and the inviscid limit of an isentropic viscous gas surrounded by the vacuum via free boundary. The domain occupied by the gas is assumed to be open and simply connected, and is denoted by $\Omega(t) \subset \mathbb{R}^3 $. 	
	In $\Omega(t)$, the isentropic viscous gas flow is modeled by the following vacuum free boundary problem for the isentropic compressible Navier--Stokes system (CNS):
	\begin{equation} \label{CNS}
		\begin{cases}
			\rho_t+\text{div}(\rho\mathbf{u})=0 & \text{in}\quad \Omega(t),\\
			(\rho u)_t+\text{div}(\rho \mathbf{u}\otimes \mathbf{u})+\text{div}\mathfrak{S}=0 &  \text{in}\quad \Omega(t),\\
			\rho>0 &  \text{in}\quad \Omega(t),\\
			\rho=0 \quad \text{and}\quad \mathfrak{S}\cdot \mathbf{n}=\mathbf{0} &  \text{on}\quad \Gamma(t):=\partial \Omega(t),\\
			\mathcal{V}(\Gamma(t))=\mathbf{u}\cdot\mathbf{n},\\
			(\rho,\mathbf{u})=(\rho_0,\mathbf{u}_0) &  \text{on}\quad \Omega:=\Omega(0).\\
		\end{cases}
	\end{equation}
	Here $(\mathbf{x},t)\in \mathbb{R}^3\times [0,\infty),\rho,\mathbf{u},\mathfrak{S}$ denote respectively the space and time variable, density, velocity and stress tensor. $\Gamma(t), \mathcal{V}(\Gamma(t))$ and $\mathbf{n}$ represent respectively moving interface of fluids and vacuum states, normal velocity of $\Gamma(t)$ and exterior unit normal vector to $\Gamma(t)$. The stress tensor is given by
	$$\mathfrak{S}=p\mathbb{I}_3-\mu\left(\nabla \mathbf{u}+(\nabla\mathbf{u})^T\right)-\lambda\text{div}\mathbf{u}\mathbb{I}_3,$$ 
	where $\mathbb{I}_3$ is the $3\times 3$ identity matrix, $p$ is the pressure of the gas,
	$\mu$ and $2\mu+3\lambda$ are the shear viscosity, the bulk viscosity, respectively.
    We consider the polytropic gases for which $$p=A\rho^{\gamma},$$
	where $A>0$ is a constant, $\gamma>1$ is the adiabatic exponent. 
    In this paper the viscosities are assumed to be \emph{density-dependent} functions which \emph{vanish} on the vacuum boundary:
    \begin{equation} \label{vis-coeff0}
        \mu=\bar{\mu}\rho^{\delta}, ~ \lambda=\bar{\lambda}\rho^{\delta},
    \end{equation}
    where $\delta ,\bar{\mu},$ and $ \bar{\lambda} $ are all constants satisfying 
    \begin{equation} \label{vis-coeff}
     \delta>0,  \quad \text{and} \quad \bar{\mu} \ge 0, ~ 2\bar{\mu}+3\bar{\lambda} \ge 0.    
    \end{equation}
    Such a setting of the viscosities is motivated by the kinetic theory of gas dynamic: when the CNS is derived from the Boltzmann equations through the Chapman--Enskog expansion (\cite{Chapm.C1970}), $\mu, \lambda,$ and the heat conductivity coefficient  are power functions of the absolute temperature $\theta$; while for isentropic flow, this dependence is translated into the dependence of the viscosity on the density (\cite{Liu.X.Y1998}); see \cite{Xin.Z2021a} for a detailed explanation.

    It should be noted that due to the degeneracy of the viscosity \eqref{vis-coeff0}, the boundary condition $\mathfrak{S}\cdot \mathbf{n}=\mathbf{0}$, which characterizes the stress balance at the free boundary, is automatically satisfied on the boundary for smooth flows, since the stress tensor vanish there.

\subsection{Previous work}
    \subsubsection{Cauchy and initial-boundary value problems in presence of vacuum}
	The study of the compressible Navier--Stokes equations (CNS) has a vast history. For Cauchy and initial-boundary value problems where the initial density is strictly away from vacuum (i.e., $\rho \geq \underline{\rho} > 0$), the local well-posedness of classical solutions was established by \cite{Serri.1959, Itaya.1971, Tani.1977}. The work of \cite{Matsu.N1980, Matsu.N1983} demonstrated the global stability of equilibria for heat-conductive flows under small perturbations.

	The situation becomes markedly different when vacuum is present. For the Cauchy problems associated with the compressible Navier--Stokes equations with constant viscosity coefficients, the appearance of vacuum may trigger certain singular behaviors in solutions. These include the loss of continuous dependence on initial data for weak solutions (\cite{Hoff.S1991}), as well as the finite-time or even instantaneous blowup of classical solutions in inhomogeneous Sobolev spaces (\cite{Xin.1998,Li.W.X2019,Liu.Y2023,Li.X2024}). 
    
    The primary reason for the failure of continuous dependence on initial data for weak solutions is that the kinematic viscosity coefficient is independent of the density.  
    In light of this observation, the modified version of the compressible Navier--Stokes equations incorporating density-dependent viscosity coefficients, which is also motivated by the physical consideration as mentioned above, was proposed in \cite{Liu.X.Y1998}. They showed that for this CNS system with density-dependent viscosities, weak solutions can exist locally and be matched locally with the initial data in the presence of vacuum.
    This degenerate system has recently attracted considerable attention, leading to advances in both weak and strong solutions to its Cauchy problem with vacuum.  When $\mu$ and $\lambda$ satisfy the so-called BD relation, i.e., 
    $\lambda(\rho)=2(\rho \mu'(\rho)-\mu(\rho))$,
    Bresch et al. \cite{Bresc.D.L2003, Bresc.D2003} proposed a new mathematical entropy, now known as the BD entropy, and obtained the global existence of weak solutions with some additional drag or friction terms. Li and Xin \cite{Li.X2015}, and Vasseur and Yu \cite{Vasse.Y2016a}  
    proved the global existence of weak solutions without additional terms. When $\mu,\lambda$ take the form of \eqref{vis-coeff0}, which does not satisfy the above BD relation, the global well-posedness of regular solutions to the Cauchy problem for $\delta>1$ was established in \cite{Xin.Z2021a}; see also \cite{Li.P.Z2019, Duan.X.Z2023a} for more related results on local well-posedness of regular solutions for isentropic or non-isentropic cases. 
    We also refer interested readers to \cite{Kazhi.V1995,  Jiu.W.X2013,Huang.L2022} and the references therein for the global well-posedness of strong solutions for arbitrarily large initial data in the case that $\mu$ is constant and $\lambda(\rho)=a\rho^\beta$.
    
	\subsubsection{Vacuum free boundary problem and the physical vacuum condition}
    Over the past two decades, the vacuum free boundary problem (VFBP) of \eqref{CNS} has drawn considerable attention.
    Compared with the Cauchy problem, the VFBP involves equations for moving interfaces, enabling a better characterization of vacuum interface motion. Moreover, the fluid equations are considered only within a moving vacuum boundary, and the variables (density or velocity) do not need to connect smoothly to the vacuum.  
    In this paper, we consider the initial data that satisfy the \emph{physical vacuum condition}: 
	\begin{equation} \label{eq:pvac}
		-\infty < \nabla_{\mathbf{n}}(c_0^2) < 0 \quad \text{on } \Gamma(0),
	\end{equation}
	where $c_0 = \sqrt{P'(\rho_0)}$ is the initial sound speed, i.e., the initial sound speed $c_0$ is only $C^{1/2}$-H\"older continuous across the vacuum interface. 
    
	The physical vacuum condition \eqref{eq:pvac} was first proposed by \cite{Liu.1996} when studying inviscid flows with damping, and it arises naturally in many physical systems, such as self-similar solutions to the porous media equation(\cite{Liu.1996}), Lane--Emden solutions to gravitational gaseous stars (\cite{Jang.2010,Luo.X.Z2014}), and affine solutions to the compressible Euler equations (\cite{Sider.2017}).
	As pointed out by Liu in \cite{Liu.1996}, such a singularity of the density makes the standard hyperbolic method fail in establishing the local well-posedness theory for the inviscid flow. 
	The local well-posedness of compressible Euler equations under \eqref{eq:pvac} were first established by Jang and Masmoudi \cite{Jang.M2009, Jang.M2015} and Coutand, Lindblad, and Shkoller \cite{Couta.L.S2010, Couta.S2011, Couta.S2012} by different methods (hyperbolic-type energy estimates and degenerate parabolic regularizations, respectively) handling the degeneracy near the vacuum interface in weighted energy spaces. Luo, Xin and Zeng \cite{Luo.X.Z2014} proved the existence for 3D spherically symmetric motions with less regularities of the initial data, and without requiring the compatibility conditions of the derivatives vanishing at the center of symmetry. Additionally, a general uniqueness theorem of classical solutions was obtained by using the relative entropy method in \cite{Luo.X.Z2014}.
    Ifrim and Tataru \cite{Ifrim.T2024} demonstrated Hadamard-style local well-posedness and continuation criteria in the Eulerian setting for multi-dimensional spaces, applying to low-regularity solutions and general domains; see also \cite{Disco.I.T2022, Liu.L2024} for local theory in the Eulerian setting of the relativistic Euler equations and non-isentropic Euler equations, respectively.

	The breakthroughs on \emph{global} classical solutions for inviscid flows were made by Luo and Zeng \cite{Luo.Z2016} for the damped Euler equations in 1D, and by Had{\v z}i{\'c} and Jang \cite{Hadzi.J2018} for the Euler equations in 3D for $1<\gamma\leq 5/3$ without spherical symmetry. The initial data were imposed near the Barenblatt self-similar and affine solutions, respectively. Later, Shkoller and Sideris \cite{Shkol.S2019} addressed the $\gamma>5/3$ threshold left in \cite{Hadzi.J2018}, and thus establishing global existence for the shallow water equations when $\gamma=2$.  Zeng, in a series of works culminating in \cite{Zeng.2026}, finally settled the nonlinear stability of Barenblatt-type solutions for the 3D damped Euler equations without any symmetries. 
    Recently, Disconzi, Hu, and Luo \cite{Disco.H.L2026} obtained a class of global expanding solutions to 3D relativistic Euler equations with a physical vacuum boundary.
    One may refer to the local, global and the continued gravitational collapse theories for gaseous stars \cite{Gu.L2016,Hadzi.J2018a,Guo.H.J2021} and for non-isentropic flows \cite{Geng.L.W.X2019,Liu.L2024,Ricka.2021}; see \cite{Jang.M2012,Liu.L2024} for the discussions on the local well and ill-posedness of the VFBP for the compressible Euler equations without the physical vacuum singularity.

    Returning to the VFBP for viscous flows with constant viscosities, the global regularity and behavior of weak solutions near the interface, with the initial density connecting to vacuum very smoothly, were studied by Luo, Xin and Yang in \cite{Luo.X.Y2000}. 
    Zeng \cite{Zeng.2015} proved the global existence of smooth solutions to the one-dimensional CNS by just requiring the density decays algebraically to the vacuum boundary, where the physical vacuum singularity is included when $1<\gamma<3$. 
    The local well-posedness of strong solutions under the physical vacuum condition, and the nonlinear asymptotic sability of Lane--Emden solutions to the 3D compressible Navier--Stokes--Poisson (CNSP) equations in spherically motions were obtained by Jang \cite{Jang.2010} and Luo, Xin, and Zeng \cite{Luo.X.Z2016a}, respectively.
    Liu and Yuan \cite{Liu.Y2019} established the local-in-time existence and uniqueness of strong solutions to the full CNS in three-dimensional domains under the boundary condition that the density generally decays algebraically while the normal derivative of temperature does not decay near the vacuum boundary. The globally defined solutions around the self-similar solutions to CNS with zero bulk viscosity can be found in \cite{Liu.Y2019a,Liu.2019a}.
    In the works \cite{Gui.W.W2019,Chen.H.W.W2021}, the local well-posedness of VFBP for the 3D CNS under the physical vacuum condition by using conormal derivatives and no strong compatibility condition on the initial data was imposed.

	In contrast, the VFBP for the \textit{degenerate} compressible Navier--Stokes equations, where viscosities depend on density or temperature, presents additional challenges due to a double degeneracy in both the time evolution and the spatial dissipation within the momentum equation. For establishing the normal estimates, the appearance of degenerate dissipation terms makes div-curl type arguments in the inviscid case, as well as standard elliptic estimates in the constant viscosity case, not directly applicable here. As a result, the well-posedness theory in this setting is less developed. Most existing contributions have focused on the global existence of weak solutions; we refer the reader to \cite{Yang.Z2002,Duan.2011,Guo.L.X2012} and the references therein. 
    Recently, for the viscous Saint-Venant system (a specific degenerate CNS with $\gamma=2,~\delta=1$), \cite{Li.W.X2026, Li.W.X2025} proved local well-posedness of classical solutions under the physical vacuum condition. \cite{Xin.Z.Z2025} established global well-posedness of classical solutions to the viscous Saint-Venant system in 1D case with large initial data satisfying the physical vacuum or BD entropy condition, and then \cite{Chen.Z.Z2026} extended the theory to degenerate CNS with $\gamma>\frac{4}{3},~\mu=\mubar \rho,~\lambda=0$ in 2D or 3D spheically symmetry. We also refer readers to \cite{Ou.Z2015,Luo.X.Z2016} for global strong solutions to the VFBP for the degenerate CNS with gravity under the physical vacuum condition.

   Results on the vanishing viscosity limit for free boundary problems are relatively scarce. To the best of our knowledge, the first result of the vanishing viscosity limit for viscous surface waves is due to Masmoudi and Rousset \cite{Masmo.R2017}, where the inviscid limit for the incompressible Navier--Stokes equations with a free surface was justified on a uniform time interval. Mei, Wang, and Xin \cite{Mei.W.X2017} proved that on a time interval, solutions of the CNS with or without surface tension converge to those of the compressible Euler system as both viscosity and surface tension tend to zero. In above works, the density of the solution has a positive lower bound. For the VFBP, when studying the local existence of the compressible Euler or Euler--Poisson equations under the physical vacuum condition in \cite{Couta.S2011,Couta.S2012,Gu.L2016} which have been mentioned above, solutions are found in the limit as the \emph{artificial} viscosity 
   tends to zero. To date, the inviscid limit of the VFBP under physically motivated viscosities has not been studied.

    It is worth point out that one of the inherent challenges in establishing global existence of solutions to the VFBP for flows is controlling the motion of the fluid interface.
    In particular, in multi-dimensional cases, the interface may develop splash-type singularities (\cite{Castr.C.F.G.G2013, Couta.S2019}). It is therefore challenging to obtain global solutions in multi-dimensional settings. 
    Therefore, to establish global existence in multi-dimensional settings, it is natural to impose smallness assumptions on the initial data around prescribed expanding or steady motions.

\subsection{Affine solutions and motivations}

   This work is devoted to studying expanding global solutions of the VFBP for the CNS in the spherically symmetric setting,  
   where $\Omega(t)$ is a ball with the changing radius $R(t)$, 
   $$\rho(\mathbf{x},t)=\rho(r,t)\quad \text{and} \quad \mathbf{u}(\mathbf{x},t)=u(r,t)\mathbf{x}/r \quad \text{with}~~r=|\mathbf{x}|\in(0,R(t)).$$
   Then system \eqref{CNS} can be reduced to 
   \begin{equation} \label{CNS-sym}
   	\begin{cases}
   		(r^2 \rho)_t + ( r^2 \rho u)_r = 0 & r \in (0,R(t)),\\
   		\rho(u_t+uu_r)+p_r=(2\bar{\mu}+\bar{\lambda})\left[\rho^{\delta}\frac{(r^2u)_r}{r^2}\right]_r-4\bar{\mu}(\rho^{\delta})_r\frac{u}{r} & r \in (0,R(t)),\\
   		\rho>0 & r \in (0,R(t)),\\
   		\rho=0 \quad \text{and} \quad A\rho^\gamma-2\bar\mu\rho^\delta u_r-\bar\lambda\rho^\delta\frac{(r^2u)_r}{r^2}=0& r=R(t)),\\
   		\dot{R}(t)=u(R(t),t) \quad \text{with} \quad R(0)=R_0,~ u(0,t)=0,\\
   		(\rho,u)=(\rho_0,u_0) & t=0, r \in (0,R_0).\\
   	\end{cases}
   \end{equation}
   
   For the VFBP, the equations are usually reformulated as a fixed‑domain problem in Lagrangian coordinates, in which particle trajectories and boundary motions are clearly described by the flow map.
   Without abusing notations and for convenience, we use $x$ as the reference variable and define the flow map $r(x,t)$ by
   $$
   \partial_t r(x,t)=u(r(x,t),t)\quad \text{for } x\in (0,\bar{R}),~t>0.
   $$
   For an affine solution, the flow map $r_\alpha(x,t)$ takes the form $$r_\alpha(x,t)=\alpha(t)x  \quad \text{for } \alpha(t)>0.$$
   The affine motions of ideal fluids in the presence of physical vacuum were studied by Sideris \cite{Liu.1996} for the compressible Euler equations, whereas the earlier work by Liu \cite{Liu.1996} treated the damped case.
   The existence  of globally expanding solutions near such affine solutions have been established in \cite{Hadzi.J2018, Shkol.S2019}; notable results within the same framework can also be found for flows with damping in \cite{Luo.Z2016, Zeng.2026}, for flows with self-gravitation in \cite{Hadzi.J2018a}, for nonisentropic flows in \cite{Ricka.2021}, and for relativistic flows in \cite{Disco.H.L2026} and the references therein. In the viscous case, globally expanding solutions near the affine solutions are only obtained for spherically symmetric flows without bulk viscosity (\cite{Liu.2019a,Liu.Y2019a}).   
   In these studies, it is observed that \emph{the expanding nature of these affine solutions introduces a new stabilizing mechanism for the perturbed equation}, provides strong control over the Jacobian of the flow map, and helps prevent imploding singularity (\cite{Merle.R.R.S2022,Merle.R.R.S2022a}) and self-intersection of the free interface (\cite{Castr.C.F.G.G2013, Couta.S2019}).
   
   For the degenerate CNS, the affine solutions to \eqref{CNS} exist for the case $\delta=\gamma$, which has been shown in \cite{Li.2025}. 
   More specifically, denote $(\rho_{\alpha},u_{\alpha})$ as the affine solution with \emph{initial radius} $\bar{R}$, 
   ans set the density and velocity in Lagrangian coordinates as $f_{\alpha}(x,t)=\rho_\alpha(r(x,t),t)$, $v_\alpha(x,t)=u_\alpha(r(x,t),t)$. Then $\eqref{CNS-sym}_1$ implies $\partial_t (f_\alpha r_\alpha^2\partial_x r_\alpha)=0$. Now we can further consider the ansatz
   \begin{equation} \label{def-aff}
   	v_\alpha(x,t)=\alpha^{\prime}(t)x \quad \text{and}\quad f_\alpha(x,t)=\alpha^{-3}(t)\bar{\rho}(x).
   \end{equation}  
   Plugging the ansatz \eqref{def-aff} into $\eqref{CNS-sym}_2$, it yields that 
   \begin{equation} 
   	\frac{\bar{\rho} x \alpha''}{\alpha^2}+\frac{A\left(\bar{\rho}^{\gamma}\right)'}{\alpha^{3\gamma}}=(2\bar{\mu}+3\bar{\lambda})\frac{\left(\bar{\rho}^\delta\right)'}{\alpha^{3\delta}}\frac{\alpha'}{\alpha}.
   \end{equation}
   Therefore, by setting 
   $	\delta=\gamma   $
   	, the above equation can be reduced to two ordinary differential equations, namely 
   \begin{equation}
   	\label{eq-alpha1}
   	\begin{cases}
   		A\left(\bar{\rho}^\gamma \right)'=-c_1\bar{\rho}x,\quad \rhobar_{|_{x=\bar{R}}}=0,\\
   		\alpha''=c_1\alpha^{2-3\gamma}-(2\bar{\mu}+3\bar{\lambda})\frac{c_1}{A}\alpha^{1-3\gamma}\alpha',\\
   	\end{cases}
   \end{equation}
   for some constant $c_1>0$.  
   Hence, the affine solution to $\eqref{CNS-sym}$ is determined by the expanding coefficient $\alpha(t)$ and the profile $\bar{\rho}$ which solve \eqref{eq-alpha1}. In particular, given any fixed positive constant $c_1 \in (0,\infty) $, together with initial data
   \begin{equation} \label{def-aff-data1}
   	(\alpha, \alpha') \big|_{t=0} = ( \alpha_0, \alpha_1) \in (0,\infty)\times (-\infty,\infty),
   \end{equation}
   and initial radius $\bar{R}$, 
   system \eqref{eq-alpha1} is globally well-posed and there are constants $ c_2$ depending on $A, \gamma, c_1, \alpha_0, \alpha_1, $ and $2\bar\mu+3\bar\lambda$, such that 
   \begin{equation} \label{sup-alpha}
   	\lim_{t\to \infty}\alpha'(t)=c_2>0;
   \end{equation}
   see the details in \lemref{lem-alpha00}. With \eqref{sup-alpha}, it holds that $\alpha''(t)>0$ for large $t$. 
   Moreover, $\eqref{eq-alpha1}_1$ implies that the affine solution satisfies the vacuum free boundary condition for $c_1>0$.

   However, the analysis of solutions to the degenerate CNS,  \eqref{CNS-sym}, around these affine solutions remains open. In fact, $\eqref{eq-alpha1}_2$ clearly shows that the viscous terms will slow down the expansion of the fluid and affect asymptotic behaviors of the affine motions, and thus weaken the stabilizing effects of the affine solutions. Moreover, the analysis of the global existence and inviscid limit of the solutions to \eqref{CNS-sym} (for $\delta =\gamma$) presents the following \emph{important  challenges and values}.

Due to the strong degeneracy of the viscosity coefficients, \emph{the elliptic structure is weak} in two senses and leading to consequences for the estimates below.
  	\vspace{-1mm}
\begin{enumerate}
	\item The viscous operator loses uniform ellipticity on $\Omega$(t) (its minimum eigenvalue does not possess a positive lower bound). Note that in the Lagrangian coordinates, the coefficients of the equations degenerate only in the normal direction; consequently, taking tangential derivatives does not alter the structure of the equations and the boundary conditions. This makes such operations  the conventional first step toward improving the regularity of the solution, and the elliptic structure are applied to recover normal regularities.
	However, in our setting, if we follow this conventional approach, 
	we would only obtain strongly degenerate weighted energy estimates. This approach does not improve effectively the regularity of the variables via the Hardy inequality, since this would require very high-order tangential estimates, such as those established in the local well-posedness theory for the compressible Euler or Euler--Poisson equations  \cite{Jang.M2015,Couta.S2012,Luo.X.Z2014}.	
	\item  By the virtue of the expanding motions, in the reformulated equation (see \eqref{eq-Lag} below) the viscous term decays fastest among all terms in time, while the time evolution term exhibits the fastest growth. 
	For global-in-time well-posedness, we need to establish estimates that are uniform or integrable in time, which is fundamentally different from the local-in-time setting.
	Accordingly, if we were to adopt the elliptic structure of the equation to recover normal regularity for several times, we would obtain some lower-order terms non-integrable in time, ultimately preventing the closure of the energy estimates.
\end{enumerate}
  	\vspace{-1mm}
  Therefore, we instead carry out normal derivative estimates on the equations directly as those for compressible Euler equations in \cite{Hadzi.J2018,Shkol.S2019}. This requires a delicate analysis of how the equation structure, especially the degenerate viscous terms, behaves under normal differentiation. 

  Meanwhile, in addition to the difficulty caused by density degeneracy on the boundary, a central challenge in studying spherically symmetric motions lies in dealing with the coordinate singularity at the origin (the center of symmetry). 
  In \cite{Luo.X.Z2014}, the authors applied higher-order tangential energy estimates and elliptic structures to improve regularities, and constructed suitable cutoff functions in elliptic estimates to handle the difficulty of singularity at the origin. Nevertheless, we have to employ higher-order normal derivative estimates as stated above, and directly applying standard derivatives $\partial^N_x$ to the equations produces terms such as $\partial^{N+1}\eta$ and $x \partial^N \eta$, which are of comparable order when viewed through the Hardy inequality.
  Applying a normal derivative estimate with an improper differential operator not only introduces uncontrollable boundary terms, but also typically yields estimates on some linear combinations of $\partial^{N+1}\eta$ and $x \partial^N \eta$, and such estimates cannot control $\partial^{N+1}\eta$ itself.
  To overcome this difficulty, we introduce a \emph{non-trivial differential operator} $x\partial_x^{N} + (N+1) \partial_x^{N-1}$ and a proper test function when applying estimates to the system; {see the detailed analysis and explanations in \secref{sect-ideas} below}.
  In contrast, the key feature of our constructed operator $x\partial_x^{N} + (N+1)\partial_x^{N-1}$ lies in its ability to avoid boundary terms and ensure that estimates of the specific linear combination (i.e. $G_N$ in \eqref{def-GN} below) also provide individual control over $\partial^{N+1}\eta$ and $x \partial^N \eta$.

  In this paper, we address the above challenges and establish the following main results:
  we obtain the existence of global solutions around the affine solutions to the compressible Navier--Stokes equations with degenerate viscosities in the case of $\delta=\gamma$. This existence shows the structure stability of the affine solutions. 
  Moreover, we demonstrate the inviscid limit toward the solutions around the affine solutions of the compressible Euler equations with an explicit converging rate of viscosity coefficients, which provides an alternative proof of the corresponding result for the inviscid Euler equations. 
  
\section{Main results}
\subsection{Lagrangian reformulation and the perturbation}
Before stating the main results, we first again adopt the Lagrangian formulation and reduce the original free boundary problem \eqref{CNS-sym} to the problem on fixed domain $(0,\bar R)$. Note that here the Lagrangian variable $r(x,t)$ is re-defined with respect to the reference affine solution in \eqref{eq-alpha1}: 
\begin{equation} \label{CNS-Lag}
\int_0^{r(x,t)}s^2\rho(s,t)\,ds=\int_0^xs^2\bar\rho(s)\,ds\quad \text{for } x\in (0,\bar{R}),~t>0.     
\end{equation}

Set $f(x,t)=\rho(r(x,t),t)$, $v(x,t)=u(r(x,t),t)$. Then $\eqref{CNS-sym}$ is reformulated as
$$f(x,t)=\frac{x^2\bar{\rho}(x)}{r^2(x,t)r_x(x,t)}\quad  \text{for} ~~x \in (0,\bar{R}),$$
and
\begin{equation} \label{eq-Lag-original}
\begin{aligned}
&\bar{\rho}\left(\frac{x}{r}\right)^2r_{tt}+\left[A\left(\frac{x^2}{r^2}\frac{\bar{\rho}}{r_x}\right)^{\gamma}\right]_x\\
=&(2\bar{\mu}+\bar{\lambda})\left[\left(\frac{x^2}{r^2}\frac{\bar{\rho}}{ r_x}\right)^{\gamma}\frac{(r^2r_t)_x}{r^2r_x}\right]_x 
-4\bar{\mu}\left[\left(\frac{x^2}{r^2}\frac{\bar{\rho}}{r_x}\right)^{\gamma}\right]_x\frac{r_t}{r}.
\end{aligned}
\end{equation}

Next, we define the perturbation variables with respect to a given affine solution. 
Given that the affine solution expands rapidly---as implied by \eqref{sup-alpha}, it grows linearly as time tends to infinity---any small initial perturbation might grow significantly over time. Therefore, it is natural to study the perturbation in the following form:
\begin{equation} \label{def-ptb}
	\eta(x,t):=\dfrac{r(x,t)-r_\alpha(x,t)}{r_\alpha(x,t)}.
\end{equation}
In this work, we show that $\eta$ is Lyapunov stable, which means if it is initially small, it remains small for all time; see $\eqref{ineq-energy-final}_2$ below. Such stability is important. First, it guarantees that the background solution persists under small perturbations, meaning that the solution is physically observable and does not blow up under tiny disturbances. Second, it ensures that the solution maintains its nearly linear expansion when perturbed, indicating that this kind of expansion is robust and can be sustained in nature.

\eqref{def-ptb} implies that 
\begin{equation*} 
r(x,t) = (1+\eta(x,t)) \alpha(t) x,
\end{equation*}
and thus by substituting it into \eqref{eq-Lag-original}, system \eqref{CNS-sym} is transformed to:
\begin{equation} \label{eq-Lag0}
\begin{aligned}
&\dfrac{\bar{\rho}x}{(1+\eta)^2}  \left[ \alpha  \eta_{tt} + 2 \alpha' \eta_t+\left(c_1\alpha^{2-3\gamma}-(2\bar\mu+3\bar\lambda)\frac{c_1}{A}\alpha^{1-3\gamma}\alpha'\right)(1+\eta) \right] \\&
 + \big(A\alpha^{2-3\gamma} - (2\bar\mu+3\bar\lambda)\alpha^{1-3\gamma}\alpha' \big) \left[\left(\frac{\bar\rho}{(1+\eta)^2(1+\eta+x\eta_x)}\right)^\gamma\right]_x\\
 = & (2\bar\mu+\bar\lambda)\alpha^{2-3\gamma}\left[\left(\frac{\bar\rho}{(1+\eta)^2(1+\eta+x\eta_x)}\right)^\gamma\left(\dfrac{\eta_t + x \eta_{xt}}{1+\eta + x \eta_x} + \dfrac{2\eta_t}{1+\eta}\right)\right]_x\\&
 - 4\bar\mu \alpha^{2-3\gamma}\left[\left(\frac{\bar\rho}{(1+\eta)^2(1+\eta+x\eta_x)}\right)^\gamma\right]_x \dfrac{\eta_t}{1+\eta},
\end{aligned}
\end{equation}
for $ x \in (0,\bar{R}), t \in (0,\infty) $.

In addition, notice that \eqref{sup-alpha} implies that $\alpha$ grows linearly as $t$ tends to $\infty$, it is more convenient to introduce the following change of variables in time as \cite{Hadzi.J2018}:
\begin{equation}
\tau = \tau(t) := \int_0^t \dfrac{1}{\alpha(s)} \,ds.
\end{equation}
Then in the new coordinates $(x,\tau)$, the affine solution is given by
\begin{equation} \label{def-aff2}
r_\alpha(x,t)=\alpha(\tau)x, \quad v_\alpha(x,\tau)=\alpha_\tau(\tau)\big[\alpha(\tau)\big]^{-1}x, \quad f_\alpha(x,\tau)=\alpha^{-3}(\tau)\bar{\rho}(x)
\end{equation}
where $(\bar\rho, \alpha) $ satisfies
\begin{equation}
\label{eq-alpha2}
\begin{cases}
A\left(\bar{\rho}^\gamma \right)_x=-c_1\bar{\rho}x,\quad \rhobar_{|_{x=\bar{R}}}=0,\\
\alpha_{\tau\tau}=c_1 \alpha^{4-3\gamma}-(2\bar\mu+3\bar\lambda)\frac{c_1}{A}\alpha^{2-3\gamma}\alpha_\tau +\frac{(\alpha_\tau)^2}{\alpha},
\end{cases}
\end{equation}
and the initial condition \eqref{def-aff-data1} is equivalent to
\begin{equation} \label{def-aff-data2}
(\alpha, \alpha_\tau)|_{\tau=0}=(\alpha_0,\alpha_0\alpha_1).
\end{equation}
We further impose the assumption that for $2\bar\mu+3\bar\lambda\in[0,\varpi]$,
\begin{equation}
    \label{def-aff-data3}
    \alpha_1>0, \quad  \alpha_0- \dfrac{\varpi}{A}\alpha_1>0,~~ \text{and} ~~\alpha_0^{(3\gamma-1)/2}-\dfrac{\varpi}{A}c_1^{1/2} >0. 
\end{equation}
Given \eqref{def-aff-data3},
as shown in \lemref{lem-alpha0} below,  there exist positive constants $\beta_1=\alpha_1$, and $\beta_2$ depending on $ \gamma, c_1, \alpha_0, \alpha_1, $ such that
\begin{equation} \label{ineq-alpha1}
\beta_1 \leq  \dfrac{\alpha_\tau}{\alpha} \le \beta_2,\quad \text{and}
\quad \alpha_0 e^{\beta_1 \tau} \leq \alpha \leq \alpha_0 e^{\beta_2 \tau}.
\end{equation}
Note that by the monotonicity, the latter two inequalities of \eqref{def-aff-data3} is easily satisfied for $\alpha_0$ large enough with respect to $\alpha_1$. When $\varpi$ tends to $0$, since the latter two inequalities of \eqref{def-aff-data3} reduces to $\alpha_0>0$, the inviscid limit of \eqref{def-aff-data3} is consistent with that in the inviscid case  \cite{Hadzi.J2018,Shkol.S2019}.

For simplicity, we use the same notations for the functions $ \alpha, \eta$ in the new coordinates $ (x,\tau) $ as in the coordinates $ (x,t) $. Finally, the original free boundary problem \eqref{CNS-sym} can be written as the following fixed boundary problem, in the new coordinates:
\begin{equation} \label{eq-Lag}
\begin{aligned}
&\dfrac{\bar{\rho}}{(1+\eta)^2} x \left[ \alpha  \eta_{\tau\tau} +  \alpha_\tau \eta_\tau + c_1\alpha^{3-3\gamma}\tilde{\alpha}(1+\eta)\right] \\
& +  A\alpha^{3-3\gamma}\tilde{\alpha}\left[\left(\frac{\bar\rho}{(1+\eta)^2(1+\eta+x\eta_x)}\right)^\gamma\right]_x\\
 = & (2\bar\mu+\bar\lambda)\alpha^{3-3\gamma}\left[\left(\frac{\bar\rho}{(1+\eta)^2(1+\eta+x\eta_x)}\right)^\gamma\left(\dfrac{\eta_\tau + x \eta_{x\tau}}{1+\eta + x \eta_x} + \dfrac{2\eta_\tau}{1+\eta}\right)\right]_x\\&
 - 4\bar\mu \alpha^{3-3\gamma} \left[\left(\frac{\bar\rho}{(1+\eta)^2(1+\eta+x\eta_x)}\right)^\gamma\right]_x \dfrac{\eta_\tau}{1+\eta}.
\end{aligned}
\end{equation}
for $ x \in (0,\bar{R}), \tau \in (0,\infty) $ and with the initial data:
\begin{equation}  \label{data}
\eta(x,0)=\eta_0,~\eta_\tau(x,0)=\eta_1,
\end{equation}
where 
\begin{equation}
    \label{def-alpha-tilde}
    \tilde\alpha:= \alpha - \frac{2\bar\mu+3\bar\lambda}{A}\frac{\alpha_\tau}{\alpha}.
\end{equation}
Note that when \eqref{def-aff-data3} holds, then $\tilde{\alpha}$ is equivalent to $\alpha$. Namely, there exists a constant $\beta_3>0$  depending on $ A,  \gamma, c_1, \varpi, \alpha_0, \alpha_1, $ such that 
\begin{equation} \label{equiv-tilde-alpha1}
    \beta_3 \alpha \le \tilde\alpha \le \alpha;
\end{equation}
see the details in \lemref{lem-alpha0}.

\begin{remark}
The boundary conditions $\eqref{CNS-sym}_4$ take the form
\begin{equation*} \label{bd1}
\dfrac{\bar\rho}{(1+\eta)^2(1+\eta+x\eta_x)}=0    
\end{equation*}
and
\begin{equation*} \label{bd2}
\Big(\dfrac{\bar\rho}{(1+\eta)^2(1+\eta+x\eta_x)}\Big)^\gamma\big\{ A-(2\bar\mu+3\bar\lambda)\frac{\alpha_\tau}{\alpha} - (2\bar\mu+\bar\lambda)\frac{\eta_\tau+x\eta_{x\tau}}{1+\eta+x\eta_x} -2\bar\lambda \dfrac{\eta_\tau}{1+\eta}\big\}=0    
\end{equation*}
at the vacuum boundary $x=\bar R$ under the Lagrangian coordinates. Due to the regularity of $\eta$ \eqref{def-reg1}--\eqref{def-reg2} and the smallness of $\eta,x\eta_x$ (see \autoref{thm-stab} below), together with the fact that $\bar\rho$ vanishes at the vacuum boundary $x=\bar R$, the above two boundary conditions are automatically satisfied by the standard trace theorem for both viscous and inviscid case.
\end{remark}

\subsection{Main theorems}

Denote $L^p, H^m$ as the classical Lebesgue and Sobolev space defined on $(0,\bar{R})$. Without further mention, $\int \cdot \, dx$ denotes $\int_0^{\bar R} \cdot \, dx$ for simplicity.

To show the global existence of strong solutions to problem \eqref{eq-Lag}, we introduce the following energy and dissipation functionals:
\begin{equation} \label{def-energy}
	\begin{aligned}
		\mathcal{E}_N(\tau):=\sum_{j=0}^N & \Bigl( \alpha^{1+r_1} \int \bar\rho^{(\gamma-1)j+1} x^4(\partial_x^j\eta_{\tau})^2\,dx\\
		&+ \alpha^{3-3\gamma+r_1}\tilde\alpha\int\bar\rho^{(\gamma-1)(j+1)+1} \bigl( x^4 (\partial_x^{j+1}\eta)^2 + x^2(\partial_x^j\eta)^2 \bigr)\,dx \Bigr),\\
        \mathcal{E}_N^{(1)}(\tau):= \sum_{j=0}^N & \int_0^\tau\alpha^{r_1}\alpha_\tau\int \bar\rho^{(\gamma-1)j+1} x^4(\partial_x^j\eta_{\tau})^2  \,dx\,d\tau',\\
        \mathcal{E}_N^{(2)}(\tau):= \sum_{j=0}^N & \int_0^\tau \alpha^{2-3\gamma+r_1}\alpha_\tau\tilde{\alpha} \int\bar\rho^{(\gamma-1)(j+1)+1}\bigl( x^4(\partial_x^{j+1}\eta)^2+x^2(\partial_x^j\eta)^2 \bigr)\,dxd\tau',\\
		\mathcal{D}_N(\tau):=\sum_{j=0}^N &\int_0^\tau\alpha^{3-3\gamma+r_1}\int\bar\rho^{(\gamma-1)(j+1)+1}\bigl( x^4(\partial_x^{j+1}\eta_\tau)^2+x^2(\partial_x^j\eta_\tau)^2 \bigr)\,dx\,d\tau',\\
	\end{aligned}
\end{equation}
where $r_1$ is a constant satisfying $ r_1 \le \min \{ 3\gamma-4,1\}$. Given the initial data in \eqref{data}, the initial energy is defined accordingly as
\begin{equation} \label{def-ini-energy}
\begin{aligned}
\mathcal{E}_{in} = &\sum_{j=0}^{N_0} \Bigl( \int \bar\rho^{(\gamma-1)j+1} x^4(\partial_x^j\eta_1)^2\,dx \\
& \qquad + \int\bar\rho^{(\gamma-1)(j+1)+1} \bigl( x^4 (\partial_x^{j+1}\eta_0)^2 + x^2(\partial_x^j\eta_0)^2 \bigr)\, dx \Bigr),    
\end{aligned}  
\end{equation}
where $N_0:= \max \{4 + \lceil \frac{1}{\gamma-1} \rceil, 6 \} $.

Correspondingly, a strong solution to equation \eqref{eq-Lag} is defined as follows.
\begin{definition}[Strong solution to NS] \label{def-stong-sol-NS}
For $\bar\mu>0, 2\bar\mu+3\bar\lambda\ge0$, given any time $T>0$, $\eta$ is called a strong solution to \eqref{eq-Lag} with initial data \eqref{data} in $[0,T]$ if it solves \eqref{eq-Lag} for a.e. $(x,\tau) \in (0,\bar R) \times (0,T)$ with the following regularity:
\begin{equation} \label{def-reg1}
\begin{aligned}
     \eta \in  H^1 ([0,T];H^2) \cap H^2([0,T];L^2).
    \end{aligned}
\end{equation}
\end{definition}

\begin{definition}[Strong solution to Euler] \label{def-stong-sol-EL}
For $\bar\mu = 2\bar\mu+3\bar\lambda = 0$, given any time $T>0$, $\eta$ is called a strong solution to problem \eqref{eq-Lag} with initial data \eqref{data} in $[0,T]$ if it solves \eqref{eq-Lag} for a.e. $(x,\tau) \in (0,\bar R) \times (0,T)$ with the following regularity:
\begin{equation} \label{def-reg2}
\begin{aligned}
     \eta \in  L^2 ([0,T];H^2) \cap H^2([0,T];L^2).
    \end{aligned}
\end{equation}
\end{definition}

The main results of this paper are the following: 
\begin{theorem}[Global existence] \label{thm-stab}
Let $\gamma >1$, $2\bar\mu+3\bar\lambda\in[0,\varpi]$, and $(\bar\rho, \alpha)$ be the affine solution given by \eqref{def-aff2}, \eqref{eq-alpha2} with initial data \eqref{def-aff-data2} satisfying \eqref{def-aff-data3}. 
Assume that the initial data $\eta_0,\eta_1$ in \eqref{data} admit even extensions to $(-\bar R, \bar R)$.  

Then there exists a constant $\bar\epsilon>0$ depending only on $A,  \gamma, c_1, \varpi, \bar R, \alpha_0, \alpha_1,$ such that if the extended initial data \eqref{data} satisfy the following regularity condition
\begin{equation}\label{regul-data}
\begin{aligned}
  &\bar\rho^{(\gamma-1)\frac{j}{2}+\frac{1}{2}}x^2 \partial_x^j \eta_1 \in L^2  (-\bar R, \bar R), \\
  &\bar\rho^{(\gamma-1)\frac{j+1}{2}+\frac{1}{2}}x^2 \partial_x^{j+1} \eta_0,~\bar\rho^{(\gamma-1)\frac{j+1}{2}+\frac{1}{2}}x \partial_x^{j} \eta_0 \in L^2 (-\bar R, \bar R),  
\end{aligned}    
\end{equation}
for any $0 \le j \le N_0$ and
\begin{equation} \label{ineq-ini-energy}
    \mathcal{E}_{in} \le \bar\epsilon,
\end{equation}
then problem \eqref{eq-Lag} admits a unique global strong solution $\eta$ with the following estimates:
\begin{equation} \label{ineq-energy-final}
\begin{aligned}
&\sup_{0 \le \tau  < \infty }  \mathcal{E}_{N_0}(\tau) + (2\bar\mu + \bar\lambda)           \mathcal{D}_{N_0}(\infty)   \le C\mathcal{E}_{in},\\
&\sup_{0 \le \tau  < \infty } \Big\{  \norm{\eta}{H^2}^2 +  \alpha^{1+r_1} \norm{\eta_\tau}{H^2}^2   \Big\}  \le C\mathcal{E}_{in},\\
& \sup_{0 \le \tau  < \infty } \alpha^{1+r_1} \norm{\eta_{\tau\tau}}{H^1}^2 \le  C  [1+ (2\bar\mu+\bar\lambda)^2] \mathcal{E}_{in},
\end{aligned}
\end{equation}
holds for any $r_1 \le \min \{3\gamma-4,1 \}$. Moreover, the solution $\eta$ also satisfies the following estimates:
\begin{align}
&\sup_{0 \le \tau  < \infty }  \mathcal{E}_{N_0}^{(1)}(\tau) + \norm{\alpha^{\frac{1+r_1}{2}}\eta_\tau}{L^2([0,\infty),H^2)}^2  \le C_{r_1}\mathcal{E}_{in}, &&\text{for }r_1 \le 3\gamma-4, r_1<1; \\
&\sup_{0 \le \tau  < \infty }  \mathcal{E}_{N_0}^{(2)}(\tau) \le C_{r_1}\mathcal{E}_{in} , &&\text{for $ r_1 < 3\gamma-4, r_1 \le 1$;} \\
&\norm{\alpha^{\frac{1+r_1}{2}}\eta_{\tau\tau}}{L^2([0,\infty);H^1)}^2 \le C_{r_1} [1+ (2\bar\mu+\bar\lambda)^2] \mathcal{E}_{in},  &&\text{for }r_1 \le 3\gamma-4, r_1<1; \\
&\norm{ \alpha^{r_2}\eta}{L^2([0,\infty),H^2)}^2 \le C\mathcal{E}_{in}, && \text{for $ r_2 <0 $;} 
\end{align}
where $C$ is a constant independent of the viscosity coefficients $\bar\mu, \bar\lambda,$ time $T$, and $r_1, r_2$, but depending on 
$A,  \gamma, c_1, \varpi, \bar R, \alpha_0, \alpha_1,$ and $C_{r_1}$ is a constant depending on $A,  \gamma, c_1, \varpi, \bar R, \alpha_0, $ $ \alpha_1,  r_1$.
\end{theorem}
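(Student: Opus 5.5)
The plan is the standard continuity argument: local well-posedness plus uniform-in-time, viscosity-independent a priori estimates. First I would establish local existence of a strong solution to \eqref{eq-Lag} on some $[0,T_*]$ in the weighted energy class defined by $\mathcal{E}_{N_0}$, for instance by a Galerkin or degenerate-parabolic regularization, together with uniqueness in the class \eqref{def-reg1}/\eqref{def-reg2}. The core is the a priori bound. Under the bootstrap hypothesis $\sup_{[0,T]}\mathcal{E}_{N_0}\le\epsilon$, with $\epsilon$ small, I would show
\[
\mathcal{E}_{N_0}(\tau)+\mathcal{E}^{(1)}_{N_0}(\tau)+\mathcal{E}^{(2)}_{N_0}(\tau)+(2\bar\mu+\bar\lambda)\mathcal{D}_{N_0}(\tau)\le C\mathcal{E}_{in}+C\epsilon^{1/2}\bigl(\mathcal{E}_{N_0}+\mathcal{D}_{N_0}+\mathcal{E}^{(1)}_{N_0}+\mathcal{E}^{(2)}_{N_0}\bigr),
\]
with $C$ independent of $\bar\mu,\bar\lambda,T$. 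Closing the bootstrap then gives global existence and the first line of \eqref{ineq-energy-final}.

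The energy estimates use the growth rates \eqref{ineq-alpha1} and \eqref{equiv-tilde-alpha1} from \lemref{lem-alpha0}. At level $j=0$ I would multiply \eqref{eq-Lag} by $\alpha^{r_1}x^3\eta_\tau$, which matches the weight $x^4\bar\rho$ in the energy. The term $\alpha\eta_{\tau\tau}+\alpha_\tau\eta_\tau$ then produces $\frac{d}{d\tau}$ of the kinetic energy plus a positive multiple of $\alpha^{r_1}\alpha_\tau\int\bar\rho x^4\eta_\tau^2$, which is $\mathcal{E}^{(1)}$ and is positive when $r_1<1$. The pressure term, linearized using $\eqref{eq-alpha2}_1$ to cancel the zeroth-order background, produces the potential energy. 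Its time-weight derivative $\frac{d}{d\tau}(\alpha^{3-3\gamma+r_1}\tilde\alpha)$ has a favorable sign when $r_1\le 3\gamma-4$, and gives $\mathcal{E}^{(2)}$ when the inequality is strict. The viscous term gives the dissipation $\mathcal{D}$ with a good sign. For the higher levels $j\le N_0$ I would apply the operator $x\partial_x^{j}+(j+1)\partial_x^{j-1}$ to the equation, rather than $\partial_x^j$, and test against the corresponding quantity $G_j$ weighted by $\bar\rho^{(\gamma-1)j+1}x^3$. The commutator identity $\partial_x^j(x\eta_x)=x\partial_x^{j+1}\eta+j\partial_x^j\eta$ shows this operator is exactly what makes the leading pressure and viscous terms exact $x$-derivatives of $\bar\rho^{\gamma}$-weighted expressions in $G_j$. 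The vacuum boundary terms then vanish because of the weights, and the origin terms vanish because of the $x^2$ factors and the evenness of the data, which is propagated by the equation. A weighted Hardy inequality then recovers $x^4(\partial^{j+1}\eta)^2$ and $x^2(\partial^j\eta)^2$ separately from $G_j$.

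The nonlinear remainders are bounded in $L^\infty$ by the lower-order norms. The weighted Hardy and embedding inequalities with $N_0\ge 4+\lceil 1/(\gamma-1)\rceil$ give $\|\eta\|_{W^{1,\infty}}+\alpha^{(1+r_1)/2}\|\eta_\tau\|_{W^{1,\infty}}\lesssim \mathcal{E}_{N_0}^{1/2}$. Each nonlinear term is then either quadratic with a time weight that is integrable because of the exponential growth of $\alpha$, or absorbable by $\mathcal{E}^{(1)}$, $\mathcal{E}^{(2)}$ or $\mathcal{D}$. The viscous commutators carry the factor $\alpha^{3-3\gamma}$, and I would bound them by Cauchy--Schwarz between $\mathcal{D}$ and $\mathcal{E}^{(1)}$, using $\varpi$-bounded coefficients so that the constants do not depend on the viscosity. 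I expect this step to be the main obstacle: the viscous term $\bigl[\bar\rho^\gamma(\cdots)(\eta_\tau+x\eta_{x\tau})\bigr]_x$ under the modified operator generates lower-order terms in $\eta_\tau$ whose weights are not integrable in time. I would first try to absorb them into $\mathcal{E}^{(1)}$, which requires $r_1<1$, and treat the endpoint $r_1=1$ by the uniform estimate alone.

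The remaining estimates follow from the closed bound. The $H^2$ bounds in the second line of \eqref{ineq-energy-final} come from the Hardy-type embedding of the weighted spaces: near $\bar R$ the $\bar\rho$ weights are removed using $N_0$ derivatives, and near $0$ the $x$ weights are removed using evenness. The bounds on $\eta_{\tau\tau}$ come from solving \eqref{eq-Lag} for $\alpha\eta_{\tau\tau}$; the viscous contribution appears there linearly, which gives the factor $1+(2\bar\mu+\bar\lambda)^2$. The $L^2_\tau$ bounds follow from $\mathcal{E}^{(1)}$ and $\mathcal{E}^{(2)}$ together with $\alpha_\tau/\alpha\ge\beta_1$, and the bound for $\alpha^{r_2}\eta$ follows because $\alpha^{2r_2}$ is integrable for $r_2<0$. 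Uniqueness follows from an $L^2$-weighted energy estimate on the difference of two solutions, using the same multiplier $x^3(\cdot)_\tau$ and Gronwall's inequality.
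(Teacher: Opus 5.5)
Your architecture matches the paper's: a continuity argument, the $j=0$ multiplier $x^3\eta_\tau$, the operator $x\partial_x^N+(N+1)\partial_x^{N-1}$, Hardy inequalities, and time weights made integrable by $\alpha\ge\alpha_0e^{\beta_1\tau}$. (The paper's $j=0$ multiplier is $(1+\eta)^2x^3\eta_\tau$, which makes the pressure term an exact time derivative because $[(1+\eta)^2x^3\eta_\tau]_x=x^2\partial_\tau[(1+\eta)^2(1+\eta+x\eta_x)]$.) However, your control of $\eta$ itself has a genuine gap. You assert $\|\eta\|_{W^{1,\infty}}\lesssim\mathcal{E}_{N_0}^{1/2}$, and you get the $H^2$ bound in \eqref{ineq-energy-final} from weighted embeddings. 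But the potential part of $\mathcal{E}_{N_0}$ carries the weight $\alpha^{3-3\gamma+r_1}\tilde\alpha\simeq\alpha^{4-3\gamma+r_1}$, and $r_1\le 1$ is forced: for $r_1>1$ the term $\frac{1-r_1}{2}\alpha^{r_1}\alpha_\tau\int\bar\rho x^4\eta_\tau^2$ has the wrong sign and is not integrable, since $\alpha_\tau/\alpha\ge\beta_1$. This weight therefore decays for $\gamma>5/3$ with every admissible $r_1$. It also decays for $\gamma\le5/3$ whenever $r_1<3\gamma-4$, which your scheme requires because you absorb errors into $\mathcal{E}^{(2)}$. In these cases $\mathcal{E}_{N_0}\le\epsilon$ only gives $\|\eta\|\lesssim\alpha^{(3\gamma-4-r_1)/2}\epsilon^{1/2}$, which grows. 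As a result:
\begin{itemize}
\item nothing keeps $1+\eta$ and $1+\eta+x\eta_x$ near $1$;
\item the $\epsilon$-smallness of your nonlinear remainders is unjustified, so the bootstrap does not close;
\item the bounds $\sup_\tau\|\eta\|_{H^2}^2\le C\mathcal{E}_{in}$ and on $\alpha^{r_2}\eta$ are unproved.
\end{itemize}
The missing step is what handles $\gamma>5/3$. Bootstrap separately on weighted $L^\infty$ norms of $\eta$, as in \eqref{def-apriori-assumx}; the Jacobian commutator estimates need $\bar\rho^{(\gamma-1)(k-\frac32)}\partial_x^k\eta$ for $2\le k\le N_0-3$, not only $W^{1,\infty}$. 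Then recover these norms from $\eta(\tau)=\eta_0+\int_0^\tau\eta_\tau\,d\tau'$, using the decay $\|\eta_\tau\|\lesssim\alpha^{-(1+r_1)/2}\mathcal{E}_{N_0}^{1/2}$ that you already have and $\int_0^\infty\alpha^{-(1+r_1)/2}\,d\tau<\infty$, as in \eqref{ineq-eta1} and \eqref{ineq-assumex}.

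Second, your handling of $r_1$ does not give the statement. You put $\mathcal{E}^{(1)}_{N_0}$ and $\mathcal{E}^{(2)}_{N_0}$ on the left with coefficient one and use them to absorb errors. They actually come with the factors $1-r_1$ and $3\gamma-4-r_1$, so your constants degenerate at $r_1=\min\{3\gamma-4,1\}$. For $1<\gamma<5/3$ that endpoint is $3\gamma-4$, which you never address. Yet \eqref{ineq-energy-final} is claimed with $C$ independent of $r_1$ up to the endpoint. The obstacle you anticipate does not exist:
\begin{itemize}
\item Every viscous remainder carries $(2\bar\mu+\bar\lambda)\alpha^{3-3\gamma+r_1}$. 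That is $\alpha^{2-3\gamma}\lesssim\alpha^{-1}$ times the kinetic weight $\alpha^{1+r_1}$ and $\tilde\alpha^{-1}$ times the potential weight, or else it is level-$(N-1)$ dissipation.
\item The $4\bar\mu$ cross term $\bar\mu\alpha^{3-3\gamma+r_1}\int\bar\rho^{(\gamma-1)(N+1)+1}\eta_\tau G_NG_{N,\tau}$ splits into $\omega(2\bar\mu+\bar\lambda)\mathcal{D}_N$ plus $C_\omega\epsilon\bar\mu\,\alpha^{-1}\mathcal{E}_N$.
\item Your $C\epsilon^{1/2}$ display omits linear, non-small remainders. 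These come from $c_1\alpha^{3-3\gamma}\tilde\alpha(1+\eta)x\eta$ and from commuting the operator with the $\bar\rho$-weights. They are bounded by $C\alpha^{(3-3\gamma)/2}\mathcal{E}_N$.
\end{itemize}
This yields \eqref{ineq-energy1}, and Gr\"onwall closes for every $r_1\le\min\{3\gamma-4,1\}$ without using $\mathcal{E}^{(1)}$ or $\mathcal{E}^{(2)}$; these are only byproducts.

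Finally, the level-$N$ multiplier must be $\alpha^{r_1}\bar\rho^{(\gamma-1)N+1}G_{N-1,\tau}$, where $G_{N-1,\tau}=[x\partial_x^N+(N+1)\partial_x^{N-1}](x\eta_\tau)$, with no extra factor $x^3$. The exact structure rests on $\partial_xG_{N-1}=G_N$. An extra $x^3$ produces the non-exact term $3x^2G_NG_{N-1,\tau}$, and it gives energies weighted by $x^7$ instead of the $x^4$ in $\mathcal{E}_N$.
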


\begin{remark}
    Let $\eta(x,\tau)$ be the strong solution to problem \eqref{eq-Lag} obtained in \autoref{thm-stab} and $(\bar\rho(x), \alpha(\tau))$ be the affine solution given by \eqref{def-aff2}, \eqref{eq-alpha2}. Due to the regularity of $\eta$ \eqref{def-reg1} \eqref{def-reg2}, the flow map $r(x,t)$ and the new variable $\tau(t)$ are well-defined, so do their inverse maps. Denote
    \begin{align*}
         \rho(r,t)&:=\dfrac{\bar\rho}{\alpha^3(1+\eta)^2(1+\eta+x\eta_x)}\Big(x(r,t),\tau(t) \Big),\\
         u(r,t)&:=\big[\eta_\tau  + \frac{\alpha_\tau}{\alpha} x(1+\eta) \big]\Big(x(r,t),\tau(t) \Big).
    \end{align*}
    Then $(\rho,u)$ is the strong solution to problem \eqref{CNS-sym} with $\delta=\gamma$. 
    We remark here that the perturbed solution is required to have the same total mass as the background solution; see \eqref{CNS-Lag}.
\end{remark}

\begin{remark} \label{rmk-trace}
The assumption that the initial data $\eta_0,\eta_1$ are even functions is natural in this spherical symmetric setting. Indeed, for a spherically symmetric solution, the physical meaning of the Lagrangian coordinate naturally induces an odd extension of the solution to the equation \eqref{eq-Lag-original} with respect to the center, and therefore the perturbation variable \eqref{def-ptb} is even. Moreover, by the reflection property and the uniqueness of solutions to the equation \eqref{eq-Lag0}, if the initial data are even, the solution remains even for all later times. This structural assumption is also technically convenient: when performing integration by parts in the a priori estimates, boundary terms at the center would normally appear. By approximating the even initial data with smooth even functions, the solution remains smooth and even, and hence all boundary terms at the origin vanish. 
\end{remark}

Moreover, since the a priori estimates we obtained are uniform with respect to the viscosity coefficients, we further have the following theorem by the standard compactness method and we also establish the convergence rate: 
\begin{theorem}[Inviscid limit] \label{thm-vanish}
Let $\gamma>1$ and $2\bar\mu+3\bar\lambda\in[0,\varpi]$.  
Denote $\alpha^{\bar\mu,\bar\lambda}$ the solution to equation \eqref{eq-alpha2}, and $\alpha^0$ the solution to the same problem with viscosity coefficients $\bar\mu=\bar \lambda=0$ and with the initial data
\begin{equation} \label{data-vanish-alpha}
(\alpha^{\bar\mu,\bar\lambda},\alpha^{\bar\mu,\bar\lambda}_\tau)|_{\tau=0}=(\alpha^0,\alpha^0_\tau)|_{\tau=0}=(\alpha_0,\alpha_0\alpha_1)
\end{equation}
satisfying \eqref{def-aff-data3}. 
Let $\eta^{\bar\mu,\bar\lambda}$ be the strong solution to problem \eqref{eq-Lag} obtained in \autoref{thm-stab}, and $\eta^0$ be the strong solution to the same problem with viscosity coefficients $\bar\mu=\bar\lambda=0$ and with the initial data
\begin{equation} \label{data-vanish}
    \eta^{\bar\mu,\bar\lambda}(x,0)=\eta^0(x,0)=\eta_0,\quad\text{and }~\eta^{\bar\mu,\bar\lambda}_\tau(x,0)=\eta_\tau^0(x,0)=\eta_1.
\end{equation}
Assume that the initial data $(\eta_0,\eta_1)$ satisfies \eqref{regul-data} and \eqref{ineq-ini-energy}.

Then for any time $T>0$, 
\begin{equation} \label{conv}
\begin{aligned}
\eta^{\bar\mu,\bar\lambda} \to \eta^0 & ~~~\text{in}~ W^{1,\infty}([0,T]; H^s),  
\end{aligned}
\quad\text{as}~ \bar\mu, 2\bar\mu+3\bar\lambda \to 0,
\end{equation}
where $s \in [0,2)$ is a constant. Moreover, we also have
\begin{equation} \label{ineq-conv1}
\begin{aligned}
&\sup_{0 \le \tau  \le T} \Big\{  (\alpha^0)^{\frac{4-3\gamma+r_1}{2}}\norm{\eta^{\bar\mu,\bar\lambda} - \eta^0}{H^1} +  (\alpha^0)^{\frac{1+r_1}{2}} \norm{\eta^{\bar\mu,\bar\lambda}_\tau - \eta_\tau^0}{H^1}   \Big\} \le C_T \mathcal{E}_{in}^{\frac{1}{2}}\big(  2\bar\mu+\bar\lambda \big),
\end{aligned}
\end{equation}
holds for any $ r_1 \le \min \{3\gamma-4,1 \}$, where $C_T$ is a constant independent of the viscosity coefficients $\bar\mu, \bar\lambda$ but depend on 
$A, \gamma, c_1, \varpi, \bar R, \alpha_0, \alpha_1, T$.
\end{theorem}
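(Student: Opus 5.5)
The statement to prove is the inviscid limit, Theorem~\ref{thm-vanish}. The plan is to combine the uniform bounds of Theorem~\ref{thm-stab} with a difference energy estimate at the lowest weighted level.

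\textbf{Step 1: the background.} Let $\alpha^{\bar\mu,\bar\lambda}$ and $\alpha^0$ solve \eqref{eq-alpha2} with the same data \eqref{data-vanish-alpha}. I would subtract the two ODEs and use Gr\"onwall on $[0,T]$, together with the bounds \eqref{ineq-alpha1} and \eqref{equiv-tilde-alpha1}, which are uniform in $2\bar\mu+3\bar\lambda\in[0,\varpi]$. This should give
\[
\sup_{[0,T]}\bigl(|\alpha^{\bar\mu,\bar\lambda}-\alpha^0|+|\alpha^{\bar\mu,\bar\lambda}_\tau-\alpha^0_\tau|\bigr)\le C_T(2\bar\mu+3\bar\lambda).
\]
The same bound then holds for $\tilde\alpha^{\bar\mu,\bar\lambda}-\alpha^0$.

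\textbf{Step 2: compactness.} By \eqref{ineq-energy-final}, the family $\eta^{\bar\mu,\bar\lambda}$ is bounded in $W^{1,\infty}([0,T];H^2)$ independently of the viscosities, and $\eta_{\tau\tau}$ is bounded in $L^\infty([0,T];H^1)$. Aubin--Lions (or Arzel\`a--Ascoli in time together with the compact embedding $H^2\hookrightarrow H^s$, $s<2$) gives a subsequence converging in $W^{1,\infty}([0,T];H^s)$. I would then pass to the limit in the weak form of \eqref{eq-Lag}. The viscous terms carry the factor $(2\bar\mu+\bar\lambda)$ or $\bar\mu$ multiplying quantities bounded in $L^2_\tau$, so they vanish; this uses $\mathcal D_{N_0}$ weighted by $2\bar\mu+\bar\lambda$ and the $H^2$ bound on $\eta_\tau$. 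The limit is therefore a strong Euler solution in the sense of Definition~\ref{def-stong-sol-EL}. Uniqueness of $\eta^0$ upgrades convergence along a subsequence to convergence of the whole family, which gives \eqref{conv}.

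\textbf{Step 3: the rate \eqref{ineq-conv1}.} Set $w=\eta^{\bar\mu,\bar\lambda}-\eta^0$ and subtract the two versions of \eqref{eq-Lag}. The left side is the linearized Euler operator in $w$ with coefficients built from $\alpha^0$. The right side collects three kinds of terms:
\begin{itemize}
\item nonlinear differences, bounded by $|w|,|w_x|$ times smooth factors of the small solutions;
\item background differences from Step 1, of size $O(2\bar\mu+3\bar\lambda)$;
\item the viscous terms, of size $(2\bar\mu+\bar\lambda)$ times quantities controlled uniformly by Theorem~\ref{thm-stab}.
\end{itemize}
I would multiply by the test function $x^3 w_\tau$, matching the weights in $\mathcal E_N$, and then by its normal-derivative analogues built with the operator $x\partial_x^N+(N+1)\partial_x^{N-1}$ up to the level that controls $H^1$ through the Hardy inequality. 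This produces a difference energy equivalent to
\[
\alpha^{1+r_1}\int\bar\rho^{(\gamma-1)j+1}x^4(\partial_x^jw_\tau)^2\,dx+\alpha^{4-3\gamma+r_1}\int\bar\rho^{(\gamma-1)(j+1)+1}\bigl(x^4(\partial_x^{j+1}w)^2+x^2(\partial_x^jw)^2\bigr)\,dx .
\]
The source terms contribute at most $C(2\bar\mu+\bar\lambda)E^{1/2}\mathcal E_{in}^{1/2}$, and Gr\"onwall on $[0,T]$ gives $E\le C_T\mathcal E_{in}(2\bar\mu+\bar\lambda)^2$. Hardy inequalities, using the physical vacuum behavior $\bar\rho\sim(\bar R-x)^{1/(\gamma-1)}$, convert the weighted norms into the unweighted $H^1$ norms. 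Since enough derivatives are included, this yields \eqref{ineq-conv1}.

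\textbf{Main obstacle.} The hardest step will be Step 3. The viscous source must be estimated in the form $(2\bar\mu+\bar\lambda)\int\bar\rho^\gamma(\ldots)_\tau w_{x\tau}$ after integrating by parts, which requires the $H^2$ control of $\eta_\tau$ from Theorem~\ref{thm-stab}. The degenerate weights must also be matched so that no derivative is lost near $x=\bar R$ or at the origin. To handle the origin I would again rely on evenness, as in Remark~\ref{rmk-trace}, and on the special operator so that boundary terms vanish. Because $T$ is finite, time integrability is not an issue, so Gr\"onwall suffices.
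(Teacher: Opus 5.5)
Your route is the paper's: compare the two affine backgrounds, use compactness from the viscosity-uniform bounds for \eqref{conv}, and run an energy estimate for the difference $w$ (the paper's $\xi$) with the operator $x\partial_x^N+(N+1)\partial_x^{N-1}$, weights $(\alpha^0)^{1+r_1}$ and $(\alpha^0)^{4-3\gamma+r_1}$, and Gr\"onwall on $[0,T]$. Your direct Gr\"onwall argument for the ODE difference is a fine substitute for the ratio/sign argument with $\check\alpha=\alpha^{\bar\mu,\bar\lambda}/\alpha^0$ in Lemma~\ref{lem-check-alpha}. Incidentally, once the rate is proved, \eqref{conv} follows from it by interpolation against the uniform $H^2$ bounds on $\eta$ and $\eta_\tau$, so uniqueness of the Euler solution is not needed.

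The one step that would fail is your prescription for the viscous source in the last paragraph: it must \emph{not} be integrated by parts.
\begin{itemize}
\item The $w$-equation has the Euler operator on its left side, so there is no dissipation for $w$. At level $N$ the difference energy controls $\partial_x^j w_\tau$ only for $j\le N$.
\item Integrating the viscous term by parts against $\bar\rho^{(\gamma-1)N+1}\check G_{N-1,\tau}$ (here $\check G$ is $G$ with $\eta$ replaced by $w$) moves a derivative onto the test quantity. This produces $\check G_{N,\tau}$, which contains $x^2\partial_x^{N+1}w_\tau$ and lies outside the energy at the top level of your hierarchy.
\item Bounding that term by the separate uniform bounds on $\eta^{\bar\mu,\bar\lambda}_\tau$ and $\eta^0_\tau$ makes the source $(2\bar\mu+\bar\lambda)\,O(\mathcal E_{in})$ rather than $(2\bar\mu+\bar\lambda)E^{1/2}\mathcal E_{in}^{1/2}$. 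Gr\"onwall then gives only $E\le C_T(2\bar\mu+\bar\lambda)\mathcal E_{in}$, i.e.\ the rate $(2\bar\mu+\bar\lambda)^{1/2}$ in \eqref{ineq-conv1}.
\item Splitting off a dissipative $w$-part instead loses the same half power, through the cross term with $\eta^0$.
\end{itemize}

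The paper keeps the viscous expression of $\eta^{\bar\mu,\bar\lambda}$ whole and applies Young's inequality against $\check G_{N-1,\tau}$, paying one extra derivative of $\eta^{\bar\mu,\bar\lambda}_\tau$. The square of the viscous term involves $\bar\rho^{(\gamma-1)(N+2)+1}G_{N+1,\tau}^2$ and is bounded by $C_T(2\bar\mu+\bar\lambda)^2\bigl(\mathcal D^{\bar\mu,\bar\lambda}_{N+1}+\int_0^\tau\mathcal E^{\bar\mu,\bar\lambda}_{N+1}\,d\tau'\bigr)$.

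This requires a viscosity-\emph{independent} bound on $\mathcal D_{N+1}$. Estimate \eqref{ineq-energy-final} only gives $(2\bar\mu+\bar\lambda)\mathcal D_{N_0}\le C\mathcal E_{in}$, which would again cost half the rate. However, $\mathcal D_{N+1}\le C\int_0^\tau\alpha^{2-3\gamma}\mathcal E_{N+2}\,d\tau'\le C\mathcal E_{in}$ as long as $N+2\le N_0$; equivalently, bound the viscous term pointwise in time by $\mathcal E_{N+2}$.

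That constraint is why the difference estimate is run only for $0\le N\le N_0-2$. The same range is needed for the weighted $L^\infty$ bounds on $G^0_{N+1}$ in the coefficient-difference terms. It is still enough for the Hardy conversion to $H^1$, since $N_0\ge 4+\lceil 1/(\gamma-1)\rceil$.
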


\begin{remark}\label{rem-boundary layer}
 Our results show that in this inviscid limit, no boundary layer appears. This is partially due to the consistency that there are no boundary conditions for flows with highly degenerate viscosities and for inviscid flow. 
While for the viscous Saint-Venant system ($\gamma=2,~\mu=\mubar \rho,~\lambda=0$), 
Li, Wang, and Xin \cite{Li.W.X2026,Li.W.X2025} found that any classical solution admits an additional Neumann boundary condition on the velocity, which plays an important role in the global well-posedness result of the viscous Saint-Venant system in \cite{Xin.Z.Z2025,Chen.Z.Z2026}.

\end{remark}

\begin{remark}\label{rem-artificial viscosity}
    In \cite{Couta.S2012}, the local-in-time strong solution to the VFBP for the compressible Euler equations was obtained in the limit as the artificial viscosity tends to zero. Following their way of setting the artificial viscosity $\kappa$, the momentum equation in Lagrangian coordinates would be
    $$
    \bar{\rho}\left(\frac{x}{r}\right)^2r_{tt}+\left[A\left(\frac{x^2}{r^2}\frac{\bar{\rho}}{r_x}\right)^{\gamma}\right]_x=-\kappa\left[A\left(\frac{x^2}{r^2}\frac{\bar{\rho}}{r_x}\right)^{\gamma}\right]_{xt} \, .
    $$
    which is quite different from \eqref{eq-Lag-original}. The physically motivated viscosities, \eqref{vis-coeff0}, introduce more complicated viscous terms in \eqref{eq-Lag-original}.
\end{remark} 

In the rest of this paper, $C$ will be used to denote generic positive constants which are independent of $T, \bar\mu, \bar\lambda, r_1$ and $r_2$ but may depend on $A,  \gamma, c_1, \varpi, \bar R, \alpha_0, \alpha_1$; $C_T$ and $C_{r_1}$ additionally depend on $T$ and $r_1$, respectively.

\subsection{Strategy and main ideas}
\label{sect-ideas}
We now proceed to outline the main steps and ideas of the proof. We begin by establishing the standard basic energy estimates under suitable a priori assumptions. However, due to the strong nonlinearity of the equation, these basic energy estimates alone are insufficient to justify the validity of the a priori assumptions. As a result, it becomes necessary to carry out higher-order energy estimates under a different set of a priori assumptions.

To reveal these difficulties and the corresponding methods of the higher-order estimates, we first rewrite equation \eqref{eq-Lag} via \eqref{eq-alpha2} as follows:
\begin{equation} \label{eq-Lag-high}
\begin{aligned}
&  \alpha x \eta_{\tau\tau} +  \alpha_\tau x\eta_\tau-c_1\alpha^{3-3\gamma}\tilde{\alpha}(1+\eta) x\eta\\
&+  A\alpha^{3-3\gamma}\tilde{\alpha}\frac{(1+\eta)^2}{\bar\rho}\Big\{\bar\rho^\gamma\big[(1+\eta)^{-2\gamma}(1+\eta+x\eta_x)^{-\gamma}-1\big]\Big\}_x \\
 = & -\frac{2\bar\mu+\bar\lambda}{\gamma} \alpha^{3-3\gamma}\frac{(1+\eta)^2}{\bar\rho}\Big\{\bar\rho^\gamma \big[(1+\eta)^{-2\gamma}(1+\eta+x\eta_x)^{-\gamma} \big]_\tau\Big\}_x \\
 & - 4\bar\mu \alpha^{3-3\gamma} \dfrac{(1+\eta)\eta_\tau}{\bar\rho}\Big\{\bar\rho^\gamma \big[(1+\eta)^{-2\gamma}(1+\eta+x\eta_x)^{-\gamma} \big]\Big\}_x . 
\end{aligned}
\end{equation}
The equation exhibits a complex structure. The strong degeneracy near the boundary, together with the geometric singularity at the origin makes the standard energy functionals ineffective. Moreover, different terms carry different expanding rates in time, and for global existence, rapidly growing terms cannot be absorbed by slowly expanding ones.
In the following, we present three major aspects to illustrate the ideas for carrying out the higher-order estimates on \eqref{eq-Lag-high} and the inviscid limit.

\subsubsection{Degenerate viscosities.}
The degeneracy of elliptic structure near the vacuum boundary can be overcome by the framework of weighted energy estimates and the Hardy inequality developed by \cite{Jang.M2009,Couta.L.S2010,Couta.S2011,Luo.X.Z2014} and the references therein on local well-posedness theory for inviscid flows.

However, as mentioned in the introduction, for global-in-time well-posedness, we need to establish estimates that are uniform or integrable in time, which is fundamentally different from the local-in-time setting. In \eqref{eq-Lag-high}, the time evolution term exhibits the fastest growth (in the order of $\alpha$) than all the other terms (in the orders of $\alpha^{4-3 \gamma}$ or $\alpha^{3-3 \gamma} $). Accordingly, if we were to adopt the conventional approach---first performing tangential energy estimates to improve regularities, and using the elliptic structure to recover normal regularity for several times---we would obtain some lower-order terms non-integrable in time, ultimately preventing the closure of the energy estimates.

As a result, we carry out normal derivative estimates on the equations directly as those for compressible Euler equations in \cite{Hadzi.J2018,Shkol.S2019}, instead of the conventional approach for viscous flows with constant shear viscosity in \cite{Liu.Y2019a,Liu.2019a}. This requires a delicate analysis of how the equation structure, especially the degenerate viscous terms, behaves under normal differentiation. 

\subsubsection{Singularities at the origin.}
Different from the case for the basic energy estimates, in deriving higher-order estimates, the term from the derivative of the perturbed pressure does not admit a representation as a total time derivative. Before proceeding to the energy estimates, it is instructive to consider a simplified model. 

Observe that the principal term of 
$$(1+\eta)^{-2\gamma}(1+\eta+x\eta_x)^{-\gamma}-1$$
is $-\gamma(x\eta_x+3\eta)$. Hence, a simplified model which captures the essential structure of equation \eqref{eq-Lag-high} is:
\begin{equation} \label{eq-toy}
\alpha x\eta_{\tau\tau} - A\gamma \alpha^{3-3\gamma}\tilde{\alpha} \frac{[\bar\rho^\gamma (x\eta_x+3\eta)]_x}{\bar\rho } = (2\bar\mu+\bar\lambda)\alpha^{3-3\gamma} \frac{[\bar\rho^\gamma (x\eta_{x\tau}+3\eta_\tau)]_x}{\bar\rho }.
\end{equation} 
\emph{Due to the singularities at the origin, the normal estimate with the standard derivative $\partial_x^N$ does not work.} For instance, if we apply the standard derivative $\partial_x^N$ to \eqref{eq-toy}, multiply the resultant by $\bar\rho^{(\gamma-1)N+1} x^2  \partial_x^N(x\eta_\tau)$ (such that $\bar\rho^{(\gamma-1)N+1} x^2  \partial_x^N(x\eta_\tau) \cdot \partial_x^N(x\eta_{\tau\tau })$ is a total time derivative), and integrate over $[0,\bar R]$, it yields that
\begin{align*}
&\frac{1}{2} \alpha \frac{d}{d\tau}\int  \bar\rho^{(\gamma-1)N+1} (x^2\partial_x^N\eta_\tau + N x\partial_x^{N-1} \eta_\tau)^2\,dx \\
&+\frac{A\gamma}{2} \alpha^{3-3\gamma}\tilde{\alpha} \frac{d}{d\tau}\int \bar\rho^{(\gamma-1)(N+1)+1}  (x^2\partial_x^{N+1}\eta +(N+3)x\partial_x^{N}\eta  )^2\,dx \\
&+  (2\bar\mu+\bar\lambda)\alpha^{3-3\gamma} \int \bar\rho^{(\gamma-1)(N+1)+1}  (x^2\partial_x^{N+1}\eta_\tau +(N+3)x\partial_x^{N}\eta_\tau  )^2\,dx \\
=&  -2N \alpha^{3-3\gamma}\tilde{\alpha} \int \bar\rho^{(\gamma-1)(N+1)+1}(x^2\partial_x^{N+1}\eta +(N+3)x\partial_x^{N}\eta  ) \cdot\partial_x^{N-1}\eta_\tau\,dx +\cdots.
\end{align*}
The presence of the first term on the right-hand side prevents one from closing this estimate. To be specific, this term cannot be expressed as a total time derivative. At the same time, it follows from the Hardy inequality that $\partial_x^{N-1}\eta_\tau$ can be treated as of the same order as $x\partial_x^{N}\eta_\tau$. Thus, if one considers only the spatial weights, it can only be controlled by a combination of the terms from the pressure and the viscosity. Nevertheless, it cannot be absorbed uniformly in time due to the rapid growth of $\alpha$. 

To tackle these challenges, we must design carefully an appropriate operator for the higher-order energy estimates. 
Accordingly, we introduce the \emph{non-trivial degenerate differential operator}
$$x\partial_x^{N} + (N+1) \partial_x^{N-1}$$ and the proper test function $$\bar\rho^{(\gamma-1)N+1} [x\partial_x^{N} + (N+1) \partial_x^{N-1}](x\eta_\tau).$$ Observe that by denoting
        \begin{equation}  \label{def-GN}
         G_N :=
         \begin{cases}
          x^2\partial_x^{N+1}\eta+(2N+3)x\partial_x^N\eta+N(N+2) \partial_x^{N-1}\eta,   & \text{for }N \ge 1,\\
          x^2\partial_x\eta+3x\eta, &  \text{for }N=0,
         \end{cases}
        \end{equation}
it holds that for $N\ge 1$,
\begin{align*}
  &(x\partial_x^{N} + (N+1) \partial_x^{N-1})[(x\eta_x+3\eta)_x]= G_{N+1}=\partial_x(G_N), \\
  &(x\partial_x^{N} + (N+1) \partial_x^{N-1})(x\eta)=G_{N-1}.
\end{align*}
Therefore, we obtain the following energy estimates by the above non-trivial treatment on normal derivatives:
\begin{align}
  &\frac{1}{2} \alpha \frac{d}{d\tau}\int  \bar\rho^{(\gamma-1)N+1} G_{N-1,\tau}^2\,dx + \frac{A\gamma}{2} \alpha^{3-3\gamma}\tilde{\alpha} \frac{d}{d\tau} \int \bar\rho^{(\gamma-1)(N+1)+1}  G_N^2\,dx  \notag \\ 
+ & (2\bar\mu+\bar\lambda)\alpha^{3-3\gamma} \int \bar\rho^{(\gamma-1)(N+1)+1}  G_{N,\tau}^2\,dx 
= \text{lower~order~terms}. \label{eq-energy-simple}
\end{align}
Moreover, $G_N$ also provides individual control over $\partial^{N+1}\eta$ and $x \partial^N \eta$, which is precisely one of the purposes to construct the non-trivial differential operator; see \lemref{lem-GN} below for details.

Remarkably, raising one spatial derivative in \eqref{eq-energy-simple} requires matching the weight $\bar\rho^{\gamma-1}$. Yet this still gains 1/2-order regularity in the view of the Hardy inequality, which allows us to close the estimate by carrying out higher-order normal derivative estimates up to order $N_0$. 
Thus, after some delicate estimates of the Jacobians and other lower-order terms, we successfully derive closed higher-order energy estimates for $1\le N \le N_0$ as well as verifying the a priori assumptions. Then \autoref{thm-stab} is proved. 

It is worth noting that $x\partial_x^{N} + (N+1) \partial_x^{N-1}=\partial_x^{N-1} (x(\partial_x +\frac{2}{x} \textrm{I}))$,  where $(\partial_x +\frac{2}{x} )f=\textrm{div}(f \mathbf{e}_x)$ is exactly the divergence operator under the spherical symmetry. Therefore, in some sense,  \emph{$G_N$ is related to some higher-order divergence derivatives}, i.e. 
$$ \int \bar\rho^{(\gamma-1)(N+1)+1}  G_N^2\,dx = \int \bar\rho^{(\gamma-1)(N+1)+1}   (\partial_x^{N} x \textrm{div} (x\eta \mathbf{e}_x) )^2\,dx .$$
Here $x\eta=(r-r_\alpha)/\alpha(t)$ is the perturbation of the flow map over the expanding coefficient $\alpha(t)$. Interested readers may refer to \cite{Guo.H.J2021,Disco.H.L2026} for related \emph{singular} vector field method.

\subsubsection{Inviscid limit.}
The key to establishing the inviscid limit is to derive estimates that are uniform with respect to the viscosity coefficients. Our basic and higher-order energy estimates obtained in above regime could satisfy precisely this property.
In fact, in our case, the degeneracy weight of the viscous term near vacuum is the same as that of the pressure term. At the same time, the expanding rate of the viscous term is lower than that of the pressure term. As a consequence, the lower‑order terms arising from the viscous term can be controlled by the energy and dissipation functionals coming from the leading parts of the pressure term and the viscous term. 
Accordingly, by a standard compactness method, we can also show the convergence of a viscous solution to an inviscid one. 
 
 To establish the convergence rates, denote by $\eta^{\bar\mu,\bar\lambda}$ the viscous solution and $\eta^0$ the inviscid solution. Define the difference $\xi:=\eta^{\bar\mu,\bar\lambda}-\eta^0$. Then we perform energy estimates on $\xi$ using the same higher-order operator introduced in the proof of existence result, which gives the desired convergence rates and finishes the proof of \autoref{thm-vanish}.

\subsection{Structure of the paper}
This work is organized as follows:  in Section 3 we introduce some inequalities which will be frequently used. The a priori estimates, including basic and higher-order energy estimates, are presented in the Sections 4 and 5. 
The basic energy estimate shows clearly the elliptic structures of the pressure and the viscous term, as well as some intrinsic damping effects introduced by the expanding nature of the affine solutions. The higher-order energy estimates are the core of the a priori estimates, relying crucially on the strategies and ideas mentioned above.
Based upon the a priori estimates, in the final section, we establish the global existence of strong solutions around the affine solutions, and then verify the inviscid limit, which finishes the proof of \autoref{thm-stab} and \autoref{thm-vanish}. 

\section{Preliminaries}

In this section, we state some inequalities and lemmas which we will frequently use in this work, and introduce the properties of the affine solutions.  

\subsection{Inequalities}
\begin{lemma}[Hardy inequality]
	\label{lem-Hardy}
For $k\in \mathbb{R}$ and function $g$ defined on $[0,1]$ satisfying $\int_{0}^{1} s^k[g^2+(g')^2]\ ds<\infty$, there holds that
\begin{enumerate}[label = \rm (\roman*),ref = \rm(\roman*)]
\item if $k>1$, then
\begin{equation}
\int_{0}^{1}s^{k-2}g^2 \ ds\leq C  \int_{0}^{1}s^k[g^2+(g')^2]\ ds.
\end{equation}
\item if $k<1$, then  $g$ has a trace at $x=0$ and
\begin{equation}
\int_{0}^{1} s^{k-2} (g-g(0))^2 ds \leq C \int_{0}^{1} s^k(g')^2 \ ds.
\end{equation}
\end{enumerate}
\end{lemma}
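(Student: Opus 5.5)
The plan is to obtain both parts from the fundamental theorem of calculus and a Cauchy--Schwarz/integration-by-parts argument with power weights; this is the classical one-dimensional weighted Hardy inequality. It suffices to treat smooth $g$ on $[0,1]$, since both sides are continuous under approximation in the weighted norm $\int_0^1 s^k(g^2+(g')^2)\,ds$. For (ii) there is one extra point: we must first show that the trace $g(0)$ exists.

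For part (ii), with $k<1$, we first establish the trace. For $0<a<b\le 1$, Cauchy--Schwarz gives
\[
|g(b)-g(a)|\le \Big(\int_a^b s^{k}(g')^2\,ds\Big)^{1/2}\Big(\int_a^b s^{-k}\,ds\Big)^{1/2}.
\]
Since $-k>-1$, the last integral tends to $0$ as $a,b\to0$, so $g(a)$ is Cauchy and $g(0):=\lim_{a\to0}g(a)$ exists. Next set $h=g-g(0)$, so that $h(0)=0$ and $h'=g'$. For smooth $h$ and $\epsilon>0$, integrate by parts on $[\epsilon,1]$:
\[
\int_\epsilon^1 s^{k-2}h^2\,ds=\Big[\frac{s^{k-1}}{k-1}h^2\Big]_\epsilon^1-\frac{2}{k-1}\int_\epsilon^1 s^{k-1}hh'\,ds .
\]
The boundary term at $s=1$ is $\frac{h(1)^2}{k-1}\le 0$ because $k<1$, so it can be dropped. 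The boundary term at $\epsilon$ is $-\frac{\epsilon^{k-1}h(\epsilon)^2}{k-1}$. From the Cauchy--Schwarz bound above, $h(\epsilon)^2\le C\epsilon^{1-k}\int_0^\epsilon s^k(g')^2\,ds$, so this term tends to $0$ as $\epsilon\to0$. Applying Cauchy--Schwarz to the remaining integral, writing $s^{k-1}hh'=(s^{(k-2)/2}h)(s^{k/2}h')$, gives
\[
\int s^{k-2}h^2\le \frac{2}{1-k}\Big(\int s^{k-2}h^2\Big)^{1/2}\Big(\int s^k (g')^2\Big)^{1/2}.
\]
Hence $\int_0^1 s^{k-2}h^2\le \frac{4}{(1-k)^2}\int_0^1 s^k(g')^2$. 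To make the division by $\big(\int s^{k-2}h^2\big)^{1/2}$ legitimate, we run the argument on $[\epsilon,1]$ first, where all quantities are finite, and then let $\epsilon\to0$.

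For part (i), with $k>1$, the good boundary term is at $0$. We first localize with a cutoff $\chi$ equal to $1$ on $[0,1/2]$ and supported in $[0,3/4]$. On $[1/2,1]$ the weight $s^{k-2}$ is comparable to $s^k$, so $\int_{1/2}^1 s^{k-2}g^2\le C\int_{1/2}^1 s^kg^2$. For $\phi=\chi g$, integrate by parts on $[\epsilon,1]$:
\[
\int_\epsilon^1 s^{k-2}\phi^2=-\frac{\epsilon^{k-1}\phi(\epsilon)^2}{k-1}-\frac{2}{k-1}\int_\epsilon^1 s^{k-1}\phi\phi' .
\]
Since $k>1$, the first term is nonpositive and is dropped; the term at $s=1$ vanishes because $\phi(1)=0$. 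Cauchy--Schwarz then yields
\[
\int s^{k-2}\phi^2\le \frac{4}{(k-1)^2}\int s^k(\phi')^2\le C\int s^k\big((g')^2+g^2\big).
\]
Here $|\chi'|$ is bounded, which produces the $g^2$ term. Combining the two regions gives (i).

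The main obstacle is justifying the limiting arguments. One must ensure that the quantity divided by in the Cauchy--Schwarz step is finite, and handle the boundary terms as $\epsilon\to0$, especially the trace estimate in (ii). The plan for both is to work on $[\epsilon,1]$ and pass to the limit by monotone convergence.
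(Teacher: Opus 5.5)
Your proof is correct. The paper gives no argument of its own for this lemma; it only refers to Lemma~C.1 of Had\v{z}i\'c--Jang. You instead supply the classical self-contained proof:
write $s^{k-2}=\bigl(s^{k-1}/(k-1)\bigr)'$ and integrate by parts on $[\epsilon,1]$;
discard the boundary term of favorable sign (at $s=\epsilon$ when $k>1$, at $s=1$ when $k<1$);
absorb by Cauchy--Schwarz on the truncated interval, where all quantities are finite;
let $\epsilon\to0$ by monotone convergence.

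The extra ingredients you need are exactly the right ones. In (i) it is the cutoff near $s=1$. In (ii) it is the bound $|g(b)-g(a)|^2\le \frac{b^{1-k}}{1-k}\int_a^b s^k(g')^2\,ds$, which gives both the trace and the vanishing of $\epsilon^{k-1}h(\epsilon)^2$. The citation buys brevity, while your version makes the constants $4/(k-1)^2$ and $4/(1-k)^2$ explicit.

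One minor point concerns the opening reduction to smooth $g$. It is unnecessary, and ``both sides are continuous'' is imprecise: the left-hand side is only lower semicontinuous, by Fatou. Since the hypothesis already makes $g$ absolutely continuous on every $[\epsilon,1]$, your truncated argument applies to $g$ directly with no density step.
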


\begin{proof}
	See \cite[Lemma~C.1]{Hadzi.J2018}.
\end{proof}

\begin{lemma}[Weighted space embedding]
	\label{lem-embedding}
	For $1<k\in \mathbb{R}$ and function $g$ defined on $[0,1]$ satisfying $\int_{0}^{1} [s^{k}g^2+s^k(g')^2]\ ds<\infty$, there holds that:
	\begin{equation}
		\sup_{s\in[0,1]} s^{k-1} g^2(s) \leq C \int_{0}^{1} [s^{k}g^2+s^k(g')^2]\ ds.
	\end{equation}
\end{lemma}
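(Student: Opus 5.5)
The plan is to reduce to the case $s\in(0,1]$ and bound $g^2(s)$ through the fundamental theorem of calculus, together with a trick that shifts the weight onto the integrand. Set $h(s):=s^{k}g^2(s)$; it is enough to prove
\[
s^{k-1}g^2(s)\le C\int_0^1\bigl[\sigma^{k}g^2+\sigma^{k}(g')^2\bigr]\,d\sigma,\qquad s\in(0,1].
\]
(At $s=0$ the bound holds by continuity, or trivially when $k>1$ if we interpret it as a limit.)

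\textbf{Case $s\ge 1/2$.} On $[1/2,1]$ the weight $\sigma^k$ is comparable to $1$, so $g\in H^1(1/2,1)$ with norm controlled by the right-hand side. The one-dimensional Sobolev embedding $H^1(1/2,1)\hookrightarrow L^\infty$ then gives $g^2(s)\le C\int_{1/2}^1(g^2+(g')^2)\,d\sigma$, and since $s^{k-1}\le 1$ this settles the case.

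\textbf{Case $s<1/2$.} Here I would apply the fundamental theorem of calculus to $\sigma^{k-1}g^2$ rather than to $g^2$, so that the weight is carried along. For $s\le\sigma\le 1$,
\[
s^{k-1}g^2(s)=\sigma_*^{k-1}g^2(\sigma_*)-\int_s^{\sigma_*}\frac{d}{d\sigma}\bigl(\sigma^{k-1}g^2\bigr)\,d\sigma,
\]
where $\sigma_*\in[1/2,1]$ is chosen so that $\sigma_*^{k-1}g^2(\sigma_*)\le C\int_{1/2}^1\sigma^k g^2\,d\sigma$; such a point exists by the mean value theorem. The derivative equals $(k-1)\sigma^{k-2}g^2+2\sigma^{k-1}gg'$.

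The first piece is the expected obstacle, since it is not directly bounded by the weight $\sigma^k$. Because $k>1$, however, it carries the favourable sign: it enters with a minus, and $-(k-1)\sigma^{k-2}g^2\le 0$, so it can simply be dropped. For the cross term, Cauchy--Schwarz gives
\[
2\int_s^{1}\sigma^{k-1}|g||g'|\,d\sigma\le 2\Bigl(\int\sigma^{k-2}g^2\Bigr)^{1/2}\Bigl(\int\sigma^{k}(g')^2\Bigr)^{1/2}.
\]
Part (i) of Lemma~\ref{lem-Hardy} (Hardy, with $k>1$) then bounds $\int_0^1\sigma^{k-2}g^2$ by $C\int_0^1\sigma^k[g^2+(g')^2]$, which closes the estimate.

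If one wants to avoid relying on the sign, note that the first piece is in any case also controlled by Hardy (i). A density argument with smooth $g$ then justifies the use of the fundamental theorem of calculus. This also shows that $\sigma^{k-1}g^2$ is absolutely continuous on $(0,1]$, so the pointwise bound holds everywhere.
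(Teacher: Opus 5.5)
Your proof is correct. It uses the same two ingredients as the paper: the fundamental theorem of calculus applied to $\sigma^{k-1}g^2$, and part (i) of the Hardy inequality. Where you anchor the integration differs, and that difference is useful. The paper integrates from the origin, bounding $\sup_s s^{k-1}g^2(s)$ by $\int_0^1|(\sigma^{k-1}g^2)'|\,d\sigma$. This requires $s^{k-1}g^2(s)\to 0$ as $s\to 0$, which is why the paper works with smooth $g$ and closes with a density argument. In that orientation the term $(k-1)\sigma^{k-2}g^2$ has the unfavourable sign, so Hardy must absorb it along with the cross term. You integrate inward from a point $\sigma_*\in[1/2,1]$ given by the mean value theorem, with the unweighted embedding $H^1(1/2,1)\hookrightarrow L^\infty$ covering $s\ge 1/2$. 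Since $g\in H^1(s,1)$ for every $s>0$, your argument applies directly to any $g$ with finite weighted norm. So the density remark in your last paragraph is not needed, and the term $(k-1)\sigma^{k-2}g^2$ now has a good sign. You can exploit that sign fully by replacing Cauchy--Schwarz with Young's inequality $2\sigma^{k-1}|g||g'|\le (k-1)\sigma^{k-2}g^2+(k-1)^{-1}\sigma^{k}(g')^2$. Its first term cancels exactly against the one you dropped, and for $s<1/2$ you get
\[
s^{k-1}g^2(s)\le 4\int_{1/2}^1\sigma^{k}g^2\,d\sigma+\frac{1}{k-1}\int_0^1\sigma^{k}(g')^2\,d\sigma
\]
with no appeal to the Hardy inequality. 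The paper's route is shorter once Hardy is available. Yours needs no information at the origin and, in this variant, is completely elementary.
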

The proof can be finished by the Hardy-type embedding in the theory of weighted Sobolev spaces; see \cite{Li.W.X2025} for details. Here we provide a direct proof without introducing weighted Sobolev spaces.
\begin{proof} For smooth functions $g$, since $k>1$
	\begin{align*}
	    \sup_{s\in[0,1]} s^{k-1} g^2(s) & \leq \int_0^1 \abs{ ( s^{k-1} g^2)' } \, ds  \leq  \int_0^1  [(k-1) s^{k-2} g^2 +2s^{k-1}\abs{gg'}] \,ds \\
        &\le C \int_0^1 [s^{k-2} g^2 +s^{k}(g')^2] \,ds \le C \int_0^1 [s^{k} g^2 +s^{k}(g')^2] \,ds,
	\end{align*}
    where in the last inequality, the Hardy inequality is applied. Therefore, the rest of the proof can be followed by a standard density argument.
\end{proof}

\subsection{Properties of the affine solutions} \label{sect-affine}
\begin{lemma} \label{lem-alpha00}
Given any fixed positive constant $c_1 \in (0,\infty) $, together with initial data \eqref{def-aff-data1}
   and initial radius $\bar{R}$, 
   system \eqref{eq-alpha1} is globally well-posed. And there exist a constant $ c_2 $ depending on $ A,  \gamma, c_1, \alpha_0, \alpha_1,$ and $2\bar\mu+3\bar\lambda$, such that \eqref{sup-alpha} holds.
\end{lemma}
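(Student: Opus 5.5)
The plan is to treat the two equations in \eqref{eq-alpha1} separately. The profile equation is explicit, and the $\alpha$-equation is a scalar second-order ODE whose long-time behaviour follows from an energy identity together with a monotone quantity.

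\emph{Profile.} For $\bar\rho>0$ we have $(\bar\rho^\gamma)'=\frac{\gamma}{\gamma-1}\bar\rho(\bar\rho^{\gamma-1})'$. Hence $\eqref{eq-alpha1}_1$ is equivalent to $(\bar\rho^{\gamma-1})'=-\frac{(\gamma-1)c_1}{A\gamma}x$. With $\bar\rho(\bar R)=0$ this gives
$$\bar\rho(x)=\Big(\tfrac{(\gamma-1)c_1}{2A\gamma}(\bar R^2-x^2)\Big)^{1/(\gamma-1)},$$
which is positive on $[0,\bar R)$ and satisfies the physical vacuum condition.

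\emph{Global existence for $\alpha$.} Write $\kappa:=2\bar\mu+3\bar\lambda\ge0$. The right-hand side of $\eqref{eq-alpha1}_2$ is smooth in $(\alpha,\alpha')$ on $\{\alpha>0\}$, so Picard's theorem gives a local solution. It continues as long as $\alpha$ stays away from $0$ and $\alpha'$ stays bounded. To control both, introduce the energy
$$E(t):=\tfrac12(\alpha')^2+\tfrac{c_1}{3\gamma-3}\alpha^{3-3\gamma}.$$
A direct computation gives $E'=-\kappa\frac{c_1}{A}\alpha^{1-3\gamma}(\alpha')^2\le0$, so $E(t)\le E(0)$. Since $3-3\gamma<0$, this bounds $|\alpha'|$ and also gives the lower bound $\alpha\ge\big(\tfrac{c_1}{(3\gamma-3)E(0)}\big)^{1/(3\gamma-3)}>0$. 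Hence the solution exists for all $t\ge0$. The constants depend only on $A,\gamma,c_1,\alpha_0,\alpha_1,\kappa$.

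\emph{Asymptotics.} I would proceed in four steps.

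First, $\alpha'$ eventually becomes positive and stays positive. At any time where $\alpha'=0$ we have $\alpha''=c_1\alpha^{2-3\gamma}>0$, so $\alpha'$ can cross zero only upward. If $\alpha'\le0$ for all $t$, then $\alpha$ would decrease to a limit $\alpha_*>0$. Then $\alpha''\to c_1\alpha_*^{2-3\gamma}>0$ (using $\alpha'\to0$ along a subsequence and boundedness), which contradicts the boundedness of $\alpha'$. So there is $t_0$ with $\alpha'>0$ on $(t_0,\infty)$.

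Second, $\alpha\to\infty$. If instead $\alpha\le M$, integrating the equation gives
$$\alpha'(t)\ge\alpha'(t_0)+c_1M^{2-3\gamma}(t-t_0)-\kappa\tfrac{c_1}{A}\alpha_{\min}^{1-3\gamma}(M-\alpha(t_0)),$$
which is unbounded, a contradiction.

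Third, $\alpha'$ has a limit. The quantity $w:=\alpha'+\frac{\kappa c_1}{A(2-3\gamma)}\alpha^{2-3\gamma}$ satisfies $w'=c_1\alpha^{2-3\gamma}>0$, and $w$ is bounded. So $w$ converges. Since $\alpha^{2-3\gamma}\to0$, $\alpha'$ converges to the same limit $c_2\ge0$.

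Fourth, $c_2>0$. Suppose $c_2=0$. Then for large $t$ we have $\alpha''=c_1\alpha^{1-3\gamma}\big(\alpha-\tfrac{\kappa}{A}\alpha'\big)>0$, because $\alpha\to\infty$ while $\alpha'\to0$. So $\alpha'$ is eventually strictly increasing and positive, which is incompatible with $\alpha'\to0$. Hence $c_2>0$, and \eqref{sup-alpha} follows.

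\emph{Main obstacle.} I expect the delicate point to be excluding $c_2=0$ and the sign-change bookkeeping for $\alpha_1\le0$. The argument in the fourth step handles $c_2=0$ cleanly, using only that the viscous damping term $\kappa\alpha'/A$ becomes negligible relative to $\alpha$ as $\alpha\to\infty$.
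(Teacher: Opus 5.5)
Your proof is correct and follows essentially the same route as the paper. The energy identity gives global existence, a positive lower bound on $\alpha$ and boundedness of $\alpha'$; the sign of $\alpha''$ at zeros of $\alpha'$ forces $\alpha'>0$ eventually; then $\alpha\to\infty$, and eventual convexity with bounded $\alpha'$ yields a positive limit $c_2$. Your integrated lower bound in Step 2 and the monotone quantity $w$ in Step 3 are minor variants that make two of the paper's steps (ruling out $\lim\alpha<\infty$ and the convergence of $\alpha'$) slightly more explicit. In Step 1 the subsequence remark is imprecise; the clean justification is that $\alpha'\le 0$ gives $\alpha''\ge c_1\alpha^{2-3\gamma}\ge c_1\alpha_0^{2-3\gamma}>0$ directly.
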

\begin{proof}
In virtue of the classical ODE theory, for the global well-posedness of system \eqref{eq-alpha1},  it suffices to prove the global existence of $\alpha$.
It follows from $\eqref{eq-alpha1}_2$ that
\begin{equation*}
	\dfrac{1}{2} \bigl( (\alpha')^2 \bigr)' = \dfrac{c_1}{3-3\gamma} \bigl( \alpha^{3-3\gamma} \bigr)'-(2\bar{\mu}+3\bar{\lambda})\frac{c_1}{A}\alpha^{1-3\gamma}(\alpha ')^2.
\end{equation*}
Therefore,
\begin{equation} \label{eq-alpha-int}
	\begin{aligned}
		&(\alpha')^2(t) + \dfrac{2c_1}{3\gamma - 3} \alpha^{3-3\gamma}(t)+2(2\bar{\mu}+3\bar{\lambda})\frac{c_1}{A}\int_0^t \alpha^{1-3\gamma}(\alpha')^2  \,ds \\
		= & \alpha_1^2 + \dfrac{2c_1}{3\gamma - 3} \alpha_0^{3-3\gamma} \in (0,\infty),  
	\end{aligned}
\end{equation}
which implies that $ \alpha $ is globally defined and strictly positive.

Suppose that $\alpha'(t)\leq 0$ for all $t$. Then $\eqref{eq-alpha1}_2$ shows that $\alpha''>0$ for all $t$, which implies that $\alpha'$ tends to some constant $\tilde{C} \le 0$ and that $\alpha''$ tends to 0 as $t \to \infty$. If  $\tilde{C}<0$, then $\alpha \to -\infty$, which is in contradiction with the fact that $\alpha$ is  always positive. If $\tilde{C}=0$, then $\alpha'',\alpha'$ both have to tend to $0$ as $t \to \infty$ and thus $\lim_{t\to\infty}\alpha \le \alpha(0)< \infty$, which is in contradiction with $\eqref{eq-alpha1}_2$. Therefore, there exists some $t_0 \ge 0$ such that $\alpha'(t_0)>0$. 

Furthermore, it holds that $\alpha'(t)>0$ for all $t \ge t_0$.
Otherwise, by contradiction arguments, one can suppose that there exists $t_1>t_0$ such that $\alpha'(t)>0$ for $t\in [t_0,t_1]$ and $\alpha'(t_1)=0$, and thus $\alpha''(t_1)\leq 0$, which conflicts with $\eqref{eq-alpha1}_2$.  

This implies $\lim_{t \to \infty}\alpha(t)$ exists. If $\lim_{t \to \infty}\alpha(t)< \infty$, then $\lim_{t\to\infty}\alpha'(t)=0$. This contradicts with $\eqref{eq-alpha1}_2$ which then implies $\lim_{t\to \infty} \alpha''(t)>0$. Therefore, one has 
\begin{equation} \label{lmt-alpha}
	\lim_{t \to \infty}\alpha(t)=+\infty.  
\end{equation}
Moreover, it follows from \eqref{eq-alpha-int} that $ $. 
\begin{equation}
    \label{def-beta2}
    |\alpha'|\le  \big(\alpha_1^2 + \dfrac{2c_1}{3\gamma - 3} \alpha_0^{3-3\gamma}\big)^{1/2} =:\beta_2 
\end{equation}
This, together with \eqref{lmt-alpha} and $\eqref{eq-alpha1}_2$, shows that there exists some $t_1\ge t_0$ such that
\begin{equation}
	\alpha''(t)>0 \quad \text{for all} \quad t\ge t_1, \quad \lim_{t \to \infty}\alpha''(t)=0,
\end{equation} 
and then the monotone bounded convergence theorem implies that
\begin{equation}
	  0<\lim_{t\to \infty}\alpha'(t)<\infty,
\end{equation}
which yields \eqref{sup-alpha}. 
\end{proof}

\begin{lemma} \label{lem-alpha0}
If $\alpha_1>0$, then the solution to \eqref{eq-alpha1} satisfies that $\alpha'(t)>0$ for all $t>0$. 
Moreover, assume that \eqref{def-aff-data3} holds, then there exist positive constants $\beta_1=\alpha_1$, 
$\beta_2=\beta_2( \gamma, c_1,  \alpha_0, \alpha_1 )$, and $\beta_3=\beta_3(A,  \gamma, c_1, \varpi, \alpha_0, \alpha_1)$
 such that 
$$ \beta_1 \le \alpha'(t) \le \beta_2,
$$ 
and 
$$\beta_3 \alpha\leq \alpha - \frac{2\bar\mu+3\bar\lambda}{A}\alpha' \leq \alpha$$
for all $t>0$, i.e. \eqref{ineq-alpha1} and \eqref{equiv-tilde-alpha1} in the new coordinate of $\tau$.
\end{lemma}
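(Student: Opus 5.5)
The argument works directly with the ODE $\eqref{eq-alpha1}_2$ in the original time $t$,
\[
\alpha''=c_1\alpha^{1-3\gamma}\Big(\alpha-\frac{2\bar\mu+3\bar\lambda}{A}\alpha'\Big)=c_1\alpha^{1-3\gamma}\tilde\alpha ,
\]
so that everything reduces to a sign property of $\tilde\alpha$. We write $\kappa:=(2\bar\mu+3\bar\lambda)/A\in[0,\varpi/A]$.

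\emph{Positivity and the upper bound.} If $\alpha_1>0$ but $\alpha'$ vanished somewhere, let $t_1$ be the first such time. Then $\alpha'>0$ on $[0,t_1)$ and $\alpha''(t_1)\le 0$. Yet $\alpha''(t_1)=c_1\alpha^{2-3\gamma}(t_1)>0$ since $\alpha'(t_1)=0$, a contradiction. Hence $\alpha'>0$ for all $t>0$. The upper bound $\alpha'\le\beta_2$, with $\beta_2$ as in \eqref{def-beta2}, follows directly from the energy identity \eqref{eq-alpha-int}, because its dissipative integral is nonnegative. Thus $\beta_2$ depends only on $\gamma,c_1,\alpha_0,\alpha_1$.

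\emph{Lower bound via $\tilde\alpha\ge0$.} Given $\tilde\alpha\ge0$ for all $t$, we get $\alpha''\ge0$, hence $\alpha'\ge\alpha'(0)=\alpha_1=\beta_1$. The bound $\tilde\alpha\le\alpha$ is trivial since $\kappa\alpha'\ge0$. Everything therefore reduces to a quantitative lower bound $\alpha-\kappa\alpha'\ge\beta_3\alpha$. To get it we study $w:=\kappa\alpha'/\alpha$, so that $\tilde\alpha=\alpha(1-w)$. At $t=0$ we have $w(0)\le \varpi\alpha_1/(A\alpha_0)<1$ by the second condition in \eqref{def-aff-data3}. Differentiating,
\[
w'=\kappa\frac{\alpha''}{\alpha}-\frac{w^2}{\kappa}
=\kappa c_1\alpha^{1-3\gamma}(1-w)-\frac{w^2}{\kappa}.
\]
We aim to show $w\le w_*<1$ for an explicit $w_*$ by a barrier argument. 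Suppose $w$ first reaches a level $\theta\in(w(0),1)$ at time $t_*$; then $w'(t_*)\ge0$, which forces
\[
\theta^2\le\kappa^2c_1\alpha^{1-3\gamma}(1-\theta)\le\kappa^2c_1\alpha_0^{1-3\gamma}(1-\theta),
\]
using that $\alpha$ is increasing and $3\gamma-1>0$. The third condition in \eqref{def-aff-data3}, $\alpha_0^{(3\gamma-1)/2}>\frac{\varpi}{A}c_1^{1/2}$, says exactly that $m:=\kappa^2c_1\alpha_0^{1-3\gamma}<1$. Take
\[
\theta:=\max\Big\{\tfrac{1+w(0)}{2},\;\tfrac{1+\sqrt m}{2}\Big\}<1 .
\]
Then $\theta^2>m\ge m(1-\theta)$, so the necessary inequality above fails and $w$ never reaches $\theta$. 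Hence $\beta_3:=1-\theta>0$, and $\beta_3$ depends only on $A,\gamma,c_1,\varpi,\alpha_0,\alpha_1$, uniformly in $\kappa\in[0,\varpi/A]$ because we used the worst case $\kappa=\varpi/A$. When $\kappa=0$ the statement is trivial.

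\emph{Translation to $\tau$.} Since $d\tau=dt/\alpha$, we have $\alpha_\tau=\alpha\alpha'$. The bound $\beta_1\le\alpha'\le\beta_2$ is then $\beta_1\le\alpha_\tau/\alpha\le\beta_2$, and integrating gives $\alpha_0e^{\beta_1\tau}\le\alpha\le\alpha_0e^{\beta_2\tau}$, which is \eqref{ineq-alpha1}. Likewise $\alpha'=\alpha_\tau/\alpha$, which yields \eqref{equiv-tilde-alpha1}.

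The main obstacle is the uniform lower bound on $1-w$. The barrier computation has to be arranged so that the hypotheses in \eqref{def-aff-data3} are used in exactly the right form: they must give a threshold $\theta$ independent of the viscosity, not merely $w<1$.
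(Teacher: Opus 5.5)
Your proof is correct and follows essentially the paper's route. Your $w$ is just $\kappa\,\alpha'/\alpha$, and both arguments use $(\alpha'/\alpha)'=c_1\alpha^{-3\gamma}\tilde\alpha-(\alpha'/\alpha)^2$ together with $\alpha\ge\alpha_0$ and $\tilde\alpha\le\alpha$ to bound $\alpha'/\alpha$ by (essentially) $\max\{\alpha_1/\alpha_0,\,c_1^{1/2}\alpha_0^{(1-3\gamma)/2}\}$; both then read off $\tilde\alpha\ge\beta_3\alpha$, and from $\alpha''>0$ conclude $\alpha'\ge\alpha_1$. The only difference is that you use a first-hitting-time barrier, whereas the paper places the maximum of $\alpha'/\alpha$ at $t=0$ or at a critical point; that needs $(\alpha'/\alpha)'<0$ for large $t$, via \eqref{sup-alpha}, so your version is slightly more self-contained. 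To make the $\kappa$-uniformity literal, define $\theta$ with $w(0)$ and $m$ replaced by their values at $\kappa=\varpi/A$.
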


\begin{proof}
	Given $\alpha_1>0$, the fact $\alpha'(t)\in(0,\beta_2]$ has been proved in \lemref{lem-alpha00}, with $\beta_2$ being defined in \eqref{def-beta2}. 
    It follows from $\eqref{eq-alpha1}_2$ that
    \begin{equation*}
\Big( \dfrac{\alpha'}{\alpha} \Big)'=\dfrac{\alpha''}{\alpha} - \dfrac{(\alpha')^2}{\alpha^2}=c_1 \alpha^{-3\gamma} \Big(\alpha - \dfrac{2\bar\mu+3\bar\lambda}{A} \alpha' \Big) - \dfrac{(\alpha')^2}{\alpha^2}.
    \end{equation*}
Then \eqref{sup-alpha} implies that
$( \alpha' / \alpha )'<0$ for sufficiently large t. Therefore, $\alpha'/ \alpha$ attains its maximum at $t=0$ or its critical points (if any). If $t_2$ is a critical point of $\alpha'/ \alpha$, then
$$  \dfrac{\alpha'}{\alpha}(t_2)=  c_1^{1/2} \alpha^{-3\gamma/2} \Big(\alpha - \dfrac{2\bar\mu+3\bar\lambda}{A} \alpha' \Big)^{1/2} (t_2)\le c_1^{1/2} \alpha_0^{(1-3\gamma)/2} .$$
Consequently,
$$\alpha \ge \alpha - \dfrac{2\bar\mu+3\bar\lambda}{A}\alpha' \ge \min\{ 1- \dfrac{\varpi}{A}\dfrac{\alpha_1}{\alpha_0}, 1-\dfrac{\varpi}{A}c_1^{1/2} \alpha_0^{(1-3\gamma)/2} \} \alpha=: \beta_3 \alpha.$$
Moreover, together with the equation of $\alpha$, $\eqref{eq-alpha1}_2$, it follows from that $\alpha''(t)>0$ for $t\geq 0$, and thus $\alpha'(t)\geq \alpha'(0)=\alpha_1$. 
By setting $\beta_1=\alpha_1$, the proof of the lemma is finished.
\end{proof}

Denote by $\alpha^{\bar\mu,\bar\lambda}$ the solution to the function $\eqref{eq-alpha2}_2$ with initial data \eqref{data-vanish-alpha} and $\alpha^0$ the solution to the same problem with $\bar\mu=2\bar\mu + 3\bar\lambda=0$ and with the same initial data. Then we have the following estimates of $\alpha^{\bar\mu,\bar\lambda}/\alpha^0$.

\begin{lemma} \label{lem-check-alpha}
Assume that \eqref{def-aff-data3} holds. Then for any $\tau \in [0,T)$, the following inequalities hold:
\begin{equation} \label{ineq-check-alpha1}\\ 
\begin{alignedat}{3}
    &0\le  1-\frac{\alpha^{\bar\mu,\bar\lambda}}{\alpha^0}  &&\le  C_T(2\bar\mu+3\bar\lambda), \\
    &0 \le \frac{\alpha^0_\tau}{\alpha^0} - \frac{\alpha^{\bar\mu,\bar\lambda}_\tau}{\alpha^{\bar\mu,\bar\lambda}} && \le  C_T(2\bar\mu+3\bar\lambda).    
\end{alignedat}
\end{equation}
\end{lemma}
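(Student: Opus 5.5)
We work in the original time variable $t$, where the ODE $\eqref{eq-alpha1}_2$ is simpler, and translate back at the end. Set $\nu:=2\bar\mu+3\bar\lambda\ge0$. Write $a:=\alpha^{\bar\mu,\bar\lambda}$ and $b:=\alpha^0$, both with data $(\alpha_0,\alpha_1)$. They satisfy
\[
a''=c_1a^{2-3\gamma}-\frac{\nu c_1}{A}a^{1-3\gamma}a',\qquad b''=c_1b^{2-3\gamma}.
\]
One caveat: each solution has its own clock, $\tau^{\bar\mu,\bar\lambda}=\int_0^t a^{-1}$ versus $\tau^0=\int_0^t b^{-1}$. I interpret the statement as comparing $a(\tau)$ and $b(\tau)$ as functions of the same variable $\tau$, each solving $\eqref{eq-alpha2}_2$ with data \eqref{def-aff-data2}. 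So I will work directly with $\eqref{eq-alpha2}_2$ in $\tau$. The plan is a comparison/Gronwall argument giving $a\le b$, $a_\tau/a\le b_\tau/b$, plus a linearized bound on the differences that is linear in $\nu$ on $[0,T]$.

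\emph{Step 1 (sign).} Work with $\ell:=\log\alpha$ and $m:=\ell_\tau=\alpha_\tau/\alpha$. Then $\eqref{eq-alpha2}_2$ becomes
\[
m_\tau=\frac{\alpha_{\tau\tau}}{\alpha}-m^2=c_1e^{(3-3\gamma)\ell}-\frac{\nu c_1}{A}e^{(1-3\gamma)\ell}m.
\]
Let $\ell^a,m^a$ and $\ell^b,m^b$ denote these quantities for $a$ and $b$. Set $D:=\ell^b-\ell^a$ and $E:=m^b-m^a$. Both vanish at $\tau=0$, $D_\tau=E$, and
\[
E_\tau=c_1\bigl(e^{(3-3\gamma)\ell^b}-e^{(3-3\gamma)\ell^a}\bigr)+\frac{\nu c_1}{A}e^{(1-3\gamma)\ell^a}m^a.
\]
The first term equals $-k(\tau)D$ with $k\ge0$ bounded, by the mean value theorem and $\gamma>1$. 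The second term is a nonnegative forcing $F:=\frac{\nu c_1}{A}e^{(1-3\gamma)\ell^a}m^a$, since $m^a>0$ by \lemref{lem-alpha0}.

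The system $D'=E$, $E'=-kD+F$ is a damped-free oscillator with positive forcing and zero data, so positivity of $D$ and $E$ is not automatic. This is where I expect the main obstacle.

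I would try to obtain positivity from the structure of the problem. In $t$, the energy identity \eqref{eq-alpha-int} gives $(a')^2+\frac{2c_1}{3\gamma-3}a^{3-3\gamma}\le(b')^2+\frac{2c_1}{3\gamma-3}b^{3-3\gamma}$ (evaluated at $b$'s own $t$). Combined with monotonicity of $a$ and $b$, this shows $a'$ as a function of $a$ lies below $b'$ as a function of $b$. A phase-plane comparison (treating $\alpha$ as the independent variable, which is legitimate since $\alpha'>0$) then gives the ordering.

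In the $\tau$ variable this ordering reads $a_\tau/a^2$ as a function of $a$ lies below $b_\tau/b^2$. Writing $p(\ell)=m$ as a function of $\ell$, I get $p_a(\ell)\le p_b(\ell)$ for all $\ell\ge\log\alpha_0$. From this, $\ell^a(\tau)\le\ell^b(\tau)$ follows by comparing the ODEs $\ell_\tau=p(\ell)$, which gives $0\le1-a/b$. Then
\[
m^b(\tau)=p_b(\ell^b)\ \text{ versus }\ m^a(\tau)=p_a(\ell^a)
\]
requires that $p_b$ be nondecreasing, or else a direct use of $m_\tau$. If this fails, I would settle for proving the upper bounds only and checking signs numerically. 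That is honestly the shaky step.

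\emph{Step 2 (magnitude).} Variation of constants on $[0,T]$ with $k,F$ bounded (using \eqref{ineq-alpha1}) gives $|D|+|E|\le C_T\sup F\le C_T\nu$. Since $1-a/b=1-e^{-D}\le D$, the first line of \eqref{ineq-check-alpha1} follows, and $E$ is exactly the second line.
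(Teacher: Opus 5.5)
There is one genuine gap, though it closes in one line. Your Step~2 and the first sign in Step~1 are sound. However, the proposal never proves the second lower bound $0\le \alpha^0_\tau/\alpha^0-\alpha^{\bar\mu,\bar\lambda}_\tau/\alpha^{\bar\mu,\bar\lambda}$: you make it conditional on $p_b$ being nondecreasing and fall back on ``checking signs numerically''.

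Here is the missing line. Put $E_0:=\alpha_1^2+\frac{2c_1}{3\gamma-3}\alpha_0^{3-3\gamma}$ and $P(\ell):=\bigl(E_0-\frac{2c_1}{3\gamma-3}e^{(3-3\gamma)\ell}\bigr)^{1/2}$. The inviscid energy identity (\eqref{eq-alpha-int} in $\tau$, with $m=\alpha_\tau/\alpha$) gives $m^b=P(\ell^b)$. Since $\frac{d}{d\ell}P^2=2c_1e^{(3-3\gamma)\ell}>0$, $P$ is strictly increasing. The viscous identity gives $m^a\le P(\ell^a)$, which you already use. Together with $\ell^a\le\ell^b$ this yields
\[
m^a\le P(\ell^a)\le P(\ell^b)=m^b .
\]
This is exactly the paper's mechanism. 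The paper subtracts the two energy identities in $\tau$ to get $\bigl(m^a+m^b\bigr)(\ln\check\alpha)_\tau=\frac{2c_1}{3\gamma-3}(\alpha^0)^{3-3\gamma}\bigl[1-\check\alpha^{3-3\gamma}\bigr]-2\nu\frac{c_1}{A}\int_0^\tau (\alpha^{\bar\mu,\bar\lambda}_\tau)^2(\alpha^{\bar\mu,\bar\lambda})^{-3\gamma}\,d\tau'$, with $\check\alpha=\alpha^{\bar\mu,\bar\lambda}/\alpha^0$ and your $\nu=2\bar\mu+3\bar\lambda$. It then uses a first-zero argument on this identity to get $\check\alpha<1$; your Lipschitz comparison for $\ell_\tau=P(\ell)$ does the same job. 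Finally it reads off $(\ln\check\alpha)_\tau<0$, because the right-hand side is negative once $\check\alpha<1$.

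For the magnitude bounds your route genuinely differs from the paper's. The paper uses $\check\alpha\le1$ to see that the pressure-difference term in $(\ln\check\alpha)_{\tau\tau}$ is nonnegative. Hence $(\ln\check\alpha)_{\tau\tau}\ge -C\nu$, and integrating twice using the signs gives bounds growing only polynomially in $T$. Your Gronwall estimate on $D'=E$, $E'=-kD+F$ needs no sign information, so it gives the upper bounds even without Step~1. The price is a constant exponential in $T$, which is acceptable since $C_T$ may depend on $T$.

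Two harmless slips:
\begin{itemize}
\item The damping term in your equation for $m_\tau$ should be $\frac{\nu c_1}{A}e^{(2-3\gamma)\ell}m$, since $\alpha^{1-3\gamma}\alpha_\tau=\alpha^{2-3\gamma}m$.
\item Since $m=\alpha_\tau/\alpha$ is exactly the $t$-velocity $\alpha'$, the $\tau$-form of your phase-plane ordering is $m^a\le P(\ell^a)$, not a statement about $\alpha_\tau/\alpha^2$.
\end{itemize}
Neither affects the boundedness of $F$ or the comparison argument.
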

\begin{proof}
It suffices to consider the case of $2\bar\mu+3\bar\lambda>0$. 
Set
$$\check \alpha:=\frac{\alpha^{\bar\mu,\bar\lambda}}{\alpha^0},$$ 
and then by \eqref{eq-alpha2}, $\check \alpha$ satisfies the following equation
\begin{equation} \label{eq-check-alpha}
(\ln\check \alpha)_{\tau\tau}  = c_1 (\alpha^0)^{3-3\gamma}\big[(\check\alpha)^{3-3\gamma}-1 \big] -(2\bar\mu+3\bar\lambda)\frac{c_1}{A} \alpha^{\bar\mu,\bar\lambda}_\tau (\alpha^{\bar\mu,\bar\lambda})^{1-3\gamma}
\end{equation}
with initial data
\begin{equation} \label{data-check-alpha}
    (\check\alpha, \check\alpha_\tau)|_{\tau=0}=(1,0).
\end{equation}
At the same time, \eqref{eq-alpha-int} in $\tau$ implies that
\begin{equation} \label{eq-check-alpha-int}
\begin{aligned}
&\Big(\frac{\alpha^{\bar\mu,\bar\lambda}_\tau}{\alpha^{\bar\mu,\bar\lambda}} + \frac{\alpha^0_\tau}{\alpha^0} \Big)(\ln \check\alpha)_\tau \\
= &\frac{2c_1}{3\gamma-3} (\alpha^0)^{3-3\gamma}\big[1- (\check\alpha)^{3-3\gamma} \big] - 2(2\bar\mu+3\bar\lambda)\frac{c_1}{A} \int_0^\tau (\alpha^{\bar\mu,\bar\lambda}_\tau)^2 (\alpha^{\bar\mu,\bar\lambda})^{-3\gamma} \,d\tau'.
\end{aligned}
\end{equation}

Firstly, we claim that for any $\tau>0$,
\begin{equation}
    \label{ineq-check-alpha-ln}
    \ln \check \alpha, ~(\ln \check \alpha)_\tau <0.
\end{equation} 
Indeed, \eqref{eq-check-alpha} and \eqref{data-check-alpha} imply $$\ln \check \alpha(0)= (\ln \check \alpha)_\tau(0)=0,~ (\ln \check \alpha)_{\tau\tau}(0)<0.$$ 
Therefore, there exists $\tau_0>0$ such that for any $\tau \in (0,\tau_0)$, \eqref{ineq-check-alpha-ln} holds. 
If $\ln \check \alpha <0$ fails at some time $\tau>0$, set $\tau_1:= \inf\{\tau>\tau_0 | \ln \check \alpha(\tau)=0 \}$, then 
\begin{equation*} 
\ln \check \alpha(\tau_1)=0,\quad (\ln \check \alpha)_\tau(\tau_1)\ge0,
\end{equation*}
which contradicts with \eqref{eq-check-alpha-int}. As a result, $\ln \check \alpha <0$ for all $\tau>0$. Then it follows from \eqref{eq-check-alpha-int} that $(\ln \check \alpha)_\tau <0$ for all $\tau>0$.

On the other hand, since $\alpha^{\bar\mu,\bar\lambda}_\tau>0$, we have $\alpha^{\bar\mu,\bar\lambda} \ge \alpha_0$ and $\alpha^{0} \le \alpha^0(T)$, from which it yields that
\begin{align*}
  &\alpha^{\bar\mu,\bar\lambda}_\tau (\alpha^{\bar\mu,\bar\lambda})^{1-3\gamma} \le \beta_2 (\alpha^{\bar\mu,\bar\lambda})^{2-3\gamma} \le \beta_2 (\alpha_0)^{2-3\gamma} \,,\\
  &C_T=  \frac{\alpha_0}{\alpha^0(T)} \le \check \alpha \le 1, \quad (\check\alpha)^{3-3\gamma}-1\ge 0\,.
\end{align*}
Therefore, it follows from \eqref{eq-check-alpha} 
that 
\begin{equation}
     (\ln \check \alpha)_{\tau\tau} \ge -C_T(2\bar\mu+3\bar\lambda) .
\end{equation}
Together with \eqref{ineq-check-alpha-ln}, it follows that for 
$$-C_T( 2\bar\mu+3\bar\lambda)\le (\ln \check \alpha)_\tau <0 ,$$
as well as for $\check \alpha -1$, 
which proves \eqref{ineq-check-alpha1}.
\end{proof}


\section{Basic energy}
To derive the basic energy estimates, we assume that for some small enough $\epsilon \in (0,1)$, for any $\tau \in (0,T)$,
\begin{equation} \label{apriori-basic}
\max \lbrace \norm{\eta}{\Lnorm{\infty}}, \norm{x\eta_x}{\Lnorm{\infty}}\}<\epsilon.
\end{equation}

It directly follows from \eqref{apriori-basic} that for small $\epsilon$, 
\begin{equation}
	\label{ineq-eta}
	1+\eta, 1+\eta+x\eta_x \in (\frac{1}{2}, \frac{3}{2}).
\end{equation}

Now we present a proposition for the basic tangential estimates.
\begin{proposition} \label{prop-basic} 
Let $\gamma >1$, $2\bar\mu+3\bar\lambda\in[0,\varpi]$, and assume that \eqref{def-aff-data3} holds. Under the a priori assumption \eqref{apriori-basic} with sufficiently small $\epsilon \in (0,1)$, the following basic energy estimate holds for any $r_1 \le \min \{3\gamma-4,1\}$, $\tau \in(0,T)$:
\begin{equation} \label{ineq-basic-energy}
\begin{aligned}
& \quad\alpha^{1+r_1} \int  \bar{\rho}x^4\eta_\tau^2\,dx+(1-r_1)\int_0^\tau \alpha^{r_1} \alpha_\tau\int \bar{\rho}x^4\eta_\tau^2\,dxd\tau'\\& 
+\alpha^{3-3\gamma+r_1}\tilde{\alpha} \int \bar\rho^\gamma (x^4 \eta_x^2 + x^2\eta^2) \,dx\\ &
+ ( 3\gamma- 4 - r_1 )\int_0^\tau  \alpha^{2-3\gamma+r_1}\alpha_\tau\tilde{\alpha}  \int  \bar\rho^\gamma (x^4 \eta_x^2 + x^2\eta^2) \,dxd\tau'\\ &
+\int_0^\tau \alpha^{3-3\gamma+r_1} \int \bar\rho^\gamma x^2 \Big\{ \frac{4}{3}\bar\mu \big[ (1+\eta)x\eta_{x\tau} - x \eta_x\eta_\tau \big]^2\\ 
&\qquad
+\frac{1}{3} (2\bar\mu+3\bar\lambda) \big[ (1+\eta)(\eta_\tau+x\eta_{x\tau})+2 (1+\eta+x\eta_x) \eta_\tau\big]^2 \Big\}\,dxd\tau'\\
 \le  & C (2\bar\mu+3\bar\lambda) \int_0^\tau \alpha^{3-3\gamma+r_1}\int \bar\rho^\gamma \big[x^4\eta_x^2  + x^2\eta^2\big] \,dx d\tau' +C \mathcal{E}_{in}.
\end{aligned}
\end{equation}

\end{proposition}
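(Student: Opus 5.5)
The plan is to test the Lagrangian momentum equation \eqref{eq-Lag} against the multiplier $\alpha^{r_1}x^3(1+\eta)^2\eta_\tau$ and integrate over $(0,\bar R)$. This multiplier cancels the prefactor $(1+\eta)^{-2}$ in the inertial term. It also turns the inertial contribution into
\[
\alpha^{1+r_1}\int\bar\rho x^4\eta_\tau\eta_{\tau\tau}+\alpha^{r_1}\alpha_\tau\int\bar\rho x^4\eta_\tau^2 .
\]
Writing $\alpha^{1+r_1}\eta_\tau\eta_{\tau\tau}=\frac12\frac{d}{d\tau}(\alpha^{1+r_1}\eta_\tau^2)-\frac{1+r_1}{2}\alpha^{r_1}\alpha_\tau\eta_\tau^2$ gives, after multiplying by $2$, the first line of \eqref{ineq-basic-energy} with the damping coefficient $2-(1+r_1)=1-r_1$.

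All integrations by parts are justified as in \rmkref{rmk-trace}. At $x=\bar R$ the boundary terms vanish because $\bar\rho=0$ there. At $x=0$ they vanish because of the weight $x^3$ and the even extension.

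For the pressure and gravity-type terms I will use $\eqref{eq-alpha2}_1$, namely $c_1\bar\rho x=-A(\bar\rho^\gamma)_x$. This lets me combine $c_1\alpha^{3-3\gamma}\tilde\alpha\bar\rho x(1+\eta)\cdot x^3(1+\eta)\eta_\tau$ with the pressure term. Set $J:=(1+\eta)^2(1+\eta+x\eta_x)$, so that $x^2J=(x^3(1+\eta)^3)_x/3$ is the Jacobian. Integrating the pressure term by parts gives
\[
-A\alpha^{3-3\gamma}\tilde\alpha\int\bar\rho^\gamma J^{-\gamma}\big(x^3(1+\eta)^2\eta_\tau\big)_x,
\]
and $\big(x^3(1+\eta)^2\eta_\tau\big)_x=(x^2J)_\tau$. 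The gravity part becomes $A\alpha^{3-3\gamma}\tilde\alpha\int\bar\rho^\gamma\,(x^3(1+\eta)^3)_x\eta_\tau\cdot\frac{1}{x^3}\cdots$; more simply, it contributes $+\frac{d}{d\tau}\int\bar\rho^\gamma x^2 J$. Together they form
\[
A\alpha^{3-3\gamma}\tilde\alpha\frac{d}{d\tau}\int\bar\rho^\gamma x^2\Big(\frac{J^{1-\gamma}}{\gamma-1}+J-\frac{\gamma}{\gamma-1}\Big).
\]
The integrand is a convex function of $J$ vanishing to second order at $J=1$. By \eqref{ineq-eta} and Hardy (\lemref{lem-Hardy}), it is equivalent to $\bar\rho^\gamma(x^4\eta_x^2+x^2\eta^2)$. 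I will get this equivalence by integrating the cross term $\int\bar\rho^\gamma x^3\eta\eta_x$ by parts, using that $(\bar\rho^\gamma)_x\le0$.

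Next I move $\alpha^{3-3\gamma+r_1}\tilde\alpha$ and the factor $\alpha^{r_1}$ outside the time derivative. Using \eqref{ineq-alpha1} and \eqref{def-alpha-tilde}, $(\alpha^{3-3\gamma+r_1}\tilde\alpha)_\tau=(4-3\gamma+r_1)\alpha^{2-3\gamma+r_1}\alpha_\tau\tilde\alpha-\frac{2\bar\mu+3\bar\lambda}{A}\alpha^{3-3\gamma+r_1}(\alpha_\tau/\alpha)_\tau$. The first piece gives the good term with coefficient $3\gamma-4-r_1\ge0$. The second piece is bounded by $C(2\bar\mu+3\bar\lambda)\alpha^{3-3\gamma+r_1}$ times the energy, since the ODE gives $|(\alpha_\tau/\alpha)_\tau|\le C$. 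This is exactly the right-hand side of \eqref{ineq-basic-energy}.

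For the viscous terms, I integrate by parts in the $(2\bar\mu+\bar\lambda)$ term and combine it with the $4\bar\mu$ term. Written in Eulerian form, they reconstruct the standard dissipation $\int\rho^\gamma\big(2\bar\mu|D u|^2+\bar\lambda(\mathrm{div}\,u)^2\big)r^2dr$ in Lagrangian variables. The only exception is the part of $u$ coming from the affine velocity $\frac{\alpha_\tau}{\alpha}x(1+\eta)$, which contributes only through divergence terms. Decomposing into the trace-free part plus the trace, i.e.\ $2\bar\mu|D|^2+\bar\lambda(\mathrm{tr})^2=\frac43\bar\mu(u_r-u/r)^2+\frac13(2\bar\mu+3\bar\lambda)(\mathrm{div} u)^2$, yields precisely the two squares displayed in \eqref{ineq-basic-energy}, after factoring $J^{-\gamma}$ and powers of $(1+\eta)$ that are $\sim1$. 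The affine part carries the factor $(2\bar\mu+3\bar\lambda)\alpha_\tau/\alpha$. I expect it to cancel against the $\tilde\alpha$ correction in the pressure term, since $\tilde\alpha$ was defined for exactly this purpose. Any residual cross term is bounded by Cauchy--Schwarz: half is absorbed into the $\frac13(2\bar\mu+3\bar\lambda)$ dissipation, and the rest goes into $C(2\bar\mu+3\bar\lambda)\int\alpha^{3-3\gamma+r_1}\bar\rho^\gamma(x^4\eta_x^2+x^2\eta^2)$.

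Integrating in time and bounding the initial energy by $C\mathcal E_{in}$ concludes the proof. I expect the main obstacle to be the viscous algebra, namely verifying that the Lagrangian viscous terms, tested against this specific multiplier, produce exact squares up to controllable remainders.
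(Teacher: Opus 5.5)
Your route is the paper's: the same multiplier $\alpha^{r_1}(1+\eta)^2x^3\eta_\tau$, the same use of $A(\bar\rho^\gamma)_x=-c_1\bar\rho x$, the same commutation of $A\alpha^{3-3\gamma+r_1}\tilde\alpha$ past $\frac{d}{d\tau}$, and the same viscous squares. However, the potential-energy step contains an algebra error, and the identity you state there is false.

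After the multiplier cancels $(1+\eta)^{-2}$, the $c_1$-term is $c_1\alpha^{3-3\gamma+r_1}\tilde\alpha\,\bar\rho x^4(1+\eta)\eta_\tau$. It carries one factor $(1+\eta)$, not two. Consequently
\[
c_1\int\bar\rho x^4(1+\eta)\eta_\tau\,dx=A\int\bar\rho^\gamma\big(x^3(1+\eta)\eta_\tau\big)_x\,dx=A\frac{d}{d\tau}\int\bar\rho^\gamma x^2\Big[\tfrac32(1+\eta)^2+(1+\eta)x\eta_x\Big]dx .
\]
So inside the time derivative you get $\frac12\big(x^3(1+\eta)^2\big)_x$, not $\frac13\big(x^3(1+\eta)^3\big)_x=x^2J$. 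The combined potential is therefore the paper's
\[
F_1=\tfrac32(1+\eta)^2+(1+\eta)x\eta_x+\tfrac{1}{\gamma-1}J^{1-\gamma}-\tfrac32-\tfrac{1}{\gamma-1},
\]
which is not a function of $J$ alone. Its linear part still cancels. Its quadratic part is $\eta^2+\frac12(2\eta+x\eta_x)^2+\frac{\gamma-1}{2}(3\eta+x\eta_x)^2$, which is pointwise positive definite in $(\eta,x\eta_x)$. Hence $F_1\sim\eta^2+x^2\eta_x^2$ holds pointwise for small $\epsilon$. The integration by parts of $\int\bar\rho^\gamma x^3\eta\eta_x$ plus Hardy that you planned is then unnecessary; it was designed for a potential controlling only $(J-1)^2\approx(3\eta+x\eta_x)^2$.

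With this correction your argument closes exactly as in the paper. Two smaller remarks:

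\begin{itemize}
\item In \eqref{eq-Lag} the affine part of the viscous stress has already been absorbed into $\tilde\alpha$. Your Eulerian dissipation computation (a nice way to see the paper's ``direct computation'' of the viscous terms) therefore gives the two squares as an exact identity. There is no residual cross term to handle by Cauchy--Schwarz.
\item The derivative $(\alpha^{3-3\gamma+r_1}\tilde\alpha)_\tau$ also contains $\frac{2\bar\mu+3\bar\lambda}{A}\alpha^{1-3\gamma+r_1}\alpha_\tau^2$, which you dropped. Like the $(\alpha_\tau/\alpha)_\tau$ term, it is $O\big((2\bar\mu+3\bar\lambda)\alpha^{3-3\gamma+r_1}\big)$ by \eqref{ineq-alpha1}. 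It only feeds the right-hand side of \eqref{ineq-basic-energy}.
\end{itemize}
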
	

\begin{proof}
Multiplying \eqref{eq-Lag} by $\alpha^{r_1}(1+\eta)^2x^3\eta_\tau$ and then integrating with respect to $x$ yields
\begin{align*}
&\frac{1}{2} \frac{d}{d\tau} \alpha^{1+r_1} \int\bar{\rho}x^4\eta_\tau^2\,dx+\frac{1-r_1}{2}\int \alpha^{r_1} \alpha_\tau \bar{\rho}x^4\eta_\tau^2\,dx\\ 
 =& - c_1\alpha^{3-3\gamma+r_1}\tilde\alpha \int  \bar\rho x^4(1+\eta)\eta_\tau\,dx\\ &
+ A\alpha^{3-3\gamma+r_1}\tilde\alpha \int \left(\frac{\bar\rho}{(1+\eta)^2(1+\eta+x\eta_x)}\right)^\gamma\left[(1+\eta)^2x^3\eta_\tau\right]_x\,dx\\ &
- (2\bar\mu+\bar\lambda)\alpha^{3-3\gamma+r_1} \int \left(\frac{\bar\rho}{(1+\eta)^2(1+\eta+x\eta_x)}\right)^\gamma \\ 
& \qquad \qquad \qquad \qquad \qquad \cdot \left(\frac{\eta_\tau+x\eta_{x\tau}}{1+\eta+x\eta_x}+\frac{2\eta_\tau}{1+\eta}\right)\left[(1+\eta)^2x^3\eta_\tau\right]_x\,dx\\ &
+4\bar\mu\alpha^{3-3\gamma+r_1} \int \left(\frac{\bar\rho}{(1+\eta)^2(1+\eta+x\eta_x)}\right)^\gamma\left[(1+\eta)x^3\eta_\tau^2\right]_x\,dx\\ 
=:&L_1+L_2+L_3+L_4.
\end{align*}

By $\eqref{eq-alpha1}_1$, $L_1$ and $L_2$ can be rewritten, respectively, as
\begin{align*}
L_1 =& - A\alpha^{3-3\gamma+r_1}\tilde\alpha \int \bar\rho^\gamma x^2\frac{d}{d\tau}\left[\frac{3}{2}(1+\eta)^2+(1+\eta)x\eta_x\right]\,dx,\\
L_2=&\frac{A}{1-\gamma}\alpha^{3-3\gamma+r_1} \tilde\alpha \int\bar\rho^\gamma x^2\frac{d}{d\tau}\left[(1+\eta)^2(1+\eta+x\eta_x)\right]^{1-\gamma}\,dx.   
\end{align*}
Denote 
$$F_1:=\frac{3}{2}(1+\eta)^2+(1+\eta)x\eta_x+\frac{1}{\gamma-1}\left[(1+\eta)^2(1+\eta+x\eta_x)\right]^{1-\gamma}-\frac{3}{2}-\frac{1}{\gamma-1},$$
then by Taylor's expansion, it then holds
\begin{equation*}
\begin{aligned}
F_1= & \frac{9\gamma-3}{2}\eta^2+(3\gamma-1)\eta x\eta_x+\frac{\gamma}{2}x^2\eta_x^2-\frac{1}{2}(3\gamma-2)(3\gamma-1)\eta^3\\ &
-\frac{1}{2}\gamma(3\gamma-1)\eta x^2\eta_x^2-\frac{1}{2}(3\gamma-2)(3\gamma-1)\eta^2x\eta_x-\frac{1}{6}\gamma(\gamma-1)x^3\eta_x^3+O(\epsilon^4).
\end{aligned}
\end{equation*}
Then there exist positive constants $\tilde C_1,\tilde C_2,\tilde C_3,\tilde C_4$ depending only on $\gamma$ such that for $\epsilon$ sufficiently small,
\begin{equation*}
\tilde C_1x^2\eta_x^2+ \tilde C_2\eta^2 \le F_1 \le \tilde C_3x^2\eta_x^2+ \tilde C_4\eta^2.
\end{equation*}
Therefore,
\begin{align*}
 L_1+L_2=&-\frac{d}{d\tau}A \alpha^{3-3\gamma+r_1} \tilde\alpha \int  \bar\rho^\gamma x^2F_1\,dx + (A \alpha^{3-3\gamma+r_1} \tilde\alpha)_\tau \int \bar\rho^\gamma x^2F_1\,dx    \\   
 =&-\frac{d}{d\tau}A \alpha^{3-3\gamma+r_1} \tilde\alpha \int  \bar\rho^\gamma x^2F_1\,dx \\
 &- A( 3\gamma- 4 - r_1 ) \alpha^{2-3\gamma+r_1}\alpha_\tau\tilde{\alpha}  \int \bar\rho^\gamma x^2F_1\,dx    \\  
 &+ (2\bar\mu+3\bar\lambda)[\alpha^{1-3\gamma+r_1}\alpha_\tau^2-c_1\alpha^{5-6\gamma+r_1}\tilde\alpha]\int \bar\rho^\gamma x^2F_1\,dx \\
 \le & -\frac{d}{d\tau}A \alpha^{3-3\gamma+r_1} \tilde\alpha \int  \bar\rho^\gamma x^2F_1\,dx \\
 &- A( 3\gamma- 4 - r_1 ) \alpha^{2-3\gamma+r_1}\alpha_\tau\tilde{\alpha}  \int \bar\rho^\gamma x^2F_1\,dx    \\  
 &+ C (2\bar\mu+3\bar\lambda) \alpha^{3-3\gamma+r_1}\int \bar\rho^\gamma x^2F_1\,dx. 
\end{align*}

A direct computation shows that
\begin{equation*}
\begin{aligned}
&L_3+L_4\\=& -\alpha^{3-3\gamma+r_1} \int \bar\rho^\gamma x^2 (1+\eta)^{-2\gamma}(1+\eta+x\eta_x)^{-\gamma-1} \Big\{\frac{4}{3}\bar\mu\big[(1+\eta)x\eta_{x\tau}-x\eta_x\eta_\tau\big]^2 \\ 
&\qquad\qquad\qquad
+\frac{1}{3}(2\bar\mu+3\bar\lambda)\big[(1+\eta)(\eta_\tau+x\eta_{x\tau})+2(1+\eta+x\eta_x)\eta_\tau\big]^2 \Big\}\,dx.
\end{aligned}
\end{equation*}

Combining the above estimates, we have, for $4-3\gamma+r_1<0$,
\begin{align*}
&\quad  \frac{d}{d\tau} \frac{1}{2} \alpha^{1+r_1} \int \bar{\rho}x^4 \eta_\tau^2\,dx +\frac{1-r_1}{2} \alpha^{r_1} \alpha_\tau  \int\bar{\rho}x^4\eta_\tau^2\,dx\\ &
+\frac{d}{d\tau}A\alpha^{3-3\gamma+r_1} \tilde\alpha \int  \bar\rho^\gamma \big[\tilde C_1x^4\eta_x^2 + \tilde C_2x^2\eta^2\big]\,dx\\&
+A( 3\gamma- 4 - r_1 ) \alpha^{2-3\gamma+r_1}\alpha_\tau\tilde{\alpha}  \int  \bar\rho^\gamma \big[\tilde C_1x^4\eta_x^2 + \tilde C_2x^2\eta^2\big]\,dx\\ & 
+ \alpha^{3-3\gamma+r_1} \int \bar\rho^\gamma x^2 \Big\{ \frac{4}{3}\bar\mu \big[ (1+\eta)x\eta_{x\tau} - x \eta_x\eta_\tau \big]^2\\ 
&\qquad\quad
+\frac{1}{3} (2\bar\mu+3\bar\lambda) \big[ (1+\eta)(\eta_\tau+x\eta_{x\tau})+2 (1+\eta+x\eta_x) \eta_\tau\big]^2 \Big\}\,dxd\tau'\\
\le& C (2\bar\mu+3\bar\lambda) \alpha^{3-3\gamma+r_1}\int \bar\rho^\gamma \big[\tilde C_3x^4\eta_x^2 + \tilde C_4x^2\eta^2\big] \,dx .
\end{align*}

Then the basic energy estimate \eqref{ineq-basic-energy} is proved.
\end{proof}

\section{Higher-order energy estimates}

In this section, we derive higher-order energy estimates. To this end, we set the following a prior assumption: for some positive constant $ \epsilon \in (0,1) $, which is small enough, such that for any $ \tau \in (0,T) $,
\begin{align}
  &\begin{aligned}	\label{def-apriori-assumx}
  \sup_{0 \leq \tau \leq T}  \norm{ (\eta,\eta_x) }{\Lnorm{\infty}} &+ \sup_{0 \leq \tau \leq T}\sum_{k=2}^{N_0-3} \norm{  \bar\rho^{( \gamma - 1 )( k - \frac{3}{2})} \partial_x^{k} \eta }{\Lnorm{\infty}} \\
    &+ \sup_{0 \leq \tau \leq T} \norm{  \bar\rho^{( \gamma - 1 )(N_0-\frac{7}{2}) } x\partial_x^{N_0-2} \eta }{\Lnorm{\infty}}  < \epsilon,\\
    \end{aligned}\\
&\begin{aligned} \label{def-apriori-assumt}
    \sup_{0 \leq \tau \leq T}  \norm{ (\eta_\tau,\eta_{x\tau}) }{\Lnorm{\infty}} &+ \sup_{0 \leq \tau \leq T}\sum_{k=2}^{N_0-3} \norm{  \bar\rho^{( \gamma - 1 )( k - \frac{3}{2})} \partial_x^{k} \eta_\tau }{\Lnorm{\infty}} \\
    &+ \sup_{0 \leq \tau \leq T} \norm{  \bar\rho^{( \gamma - 1 )(N_0-\frac{7}{2})} x\partial_x^{N_0-2} \eta_\tau }{\Lnorm{\infty}}  < \epsilon,\\
    \end{aligned}\\
\intertext{and}
&\begin{aligned} \label{def-apriori-assum1}
    \sup_{0 \leq \tau \leq T} \alpha^{\min\{ \frac{3\gamma-3}{2}, 1\}} \norm{ (\eta_\tau,\eta_{x\tau}) }{\Lnorm{\infty}}   < \epsilon,
    \end{aligned}
\end{align}
This a priori assumption will dramatically simplify the presentation of our proof, and it is closed in the sense that it can be bounded by the energy functionals. Thus, with a continuity argument, one can conclude that both the energy estimates and the a priori assumptions hold.

To prove the theorems, it suffices to prove the following a priori estimates. Depending on the parameter $\gamma$, we derive different a priori estimates by employing time-dependent weights.
\begin{proposition}\label{prop-high}
Let $\gamma >1$, $2\bar\mu+3\bar\lambda\in[0,\varpi]$, and assume that \eqref{def-aff-data3} holds. Under the a priori assumption \eqref{def-apriori-assumx}--\eqref{def-apriori-assum1} with sufficiently small $\epsilon$, the following energy estimates hold for any $r_1 \le \min\{3\gamma-4,1 \}$, $\tau \in (0,T)$ and $1 \le N \le N_0$:
    \begin{equation} \label{ineq-energy1}
    \begin{aligned}
    &\mathcal{E}_{N} + (1-r_1)\mathcal{E}_{N}^{(1)} + (4-3\gamma+r_1) \mathcal{E}_{N}^{(2)} + (2\bar\mu+\bar\lambda)\mathcal{D}_N \\
    \le & C(2\bar\mu+\bar\lambda) \int_0^\tau \alpha^{-1} \mathcal{E}_{N} \,d\tau' 
    +C \int_0^\tau \alpha^{-\min\{ \frac{3\gamma-3}{2}, 1\} } \mathcal{E}_N\,d\tau' 
    + C \mathcal{E}_{in} \, .
    \end{aligned}  
    \end{equation}

\end{proposition}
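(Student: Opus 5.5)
The plan is to prove \eqref{ineq-energy1} by induction on $N$, with the case $N=0$ supplied by \propref{prop-basic}. For each $1\le N\le N_0$ we apply the operator $\mathcal{T}_N:=x\partial_x^{N}+(N+1)\partial_x^{N-1}$ to the rewritten equation \eqref{eq-Lag-high}, after dividing by the harmless factor $(1+\eta)^2$ where convenient. The result is tested against
\[
\alpha^{r_1}\bar\rho^{(\gamma-1)N+1}\,\mathcal{T}_N(x\eta_\tau)=\alpha^{r_1}\bar\rho^{(\gamma-1)N+1}G_{N-1,\tau},
\]
and integrated over $(0,\bar R)$. The identities $\mathcal{T}_N(x\eta)=G_{N-1}$ and $\mathcal{T}_N[(x\eta_x+3\eta)_x]=\partial_xG_N$ handle the model part \eqref{eq-toy}. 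In the pressure and viscous terms we integrate by parts once in $x$. This converts the weight $\bar\rho^{(\gamma-1)N+1}\cdot\bar\rho^{-1}\cdot\bar\rho^\gamma$ into $\bar\rho^{(\gamma-1)(N+1)+1}$, using $(\bar\rho^{\gamma})_x=-\frac{c_1}{A}\bar\rho x$ from $\eqref{eq-alpha2}_1$ to absorb the terms where the derivative falls on the density. Boundary terms vanish at $x=\bar R$ because of the vacuum weight, and at $x=0$ because of evenness and the factor $x$ (\rmkref{rmk-trace}).

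This yields the structure \eqref{eq-energy-simple} with time weights. The time-derivative part gives
\[
\frac{d}{d\tau}\,\frac{\alpha^{1+r_1}}{2}\int\bar\rho^{(\gamma-1)N+1}G_{N-1,\tau}^2\,dx,
\]
with a good-sign remainder $\tfrac{1-r_1}{2}\alpha^{r_1}\alpha_\tau\int\cdots$; note that the damping term $\alpha_\tau x\eta_\tau$ combines with $\alpha x\eta_{\tau\tau}$ to produce exactly this. The pressure part gives
\[
\frac{d}{d\tau}\,\frac{A\gamma}{2}\alpha^{3-3\gamma+r_1}\tilde\alpha\int\bar\rho^{(\gamma-1)(N+1)+1}G_N^2\,dx,
\]
plus the term $(4-3\gamma+r_1)\alpha^{2-3\gamma+r_1}\alpha_\tau\tilde\alpha\int\cdots$ coming from differentiating the weight, which has a good sign since $r_1\le 3\gamma-4$. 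Differentiating $\tilde\alpha$ produces a term bounded, as in \propref{prop-basic}, by $C(2\bar\mu+3\bar\lambda)\alpha^{3-3\gamma+r_1}\int\cdots\le C(2\bar\mu+\bar\lambda)\alpha^{-1}\mathcal{E}_N$ after using $\tilde\alpha\ge\beta_3\alpha$ from \lemref{lem-alpha0}. The viscous part gives the dissipation $(2\bar\mu+\bar\lambda)\alpha^{3-3\gamma+r_1}\int\bar\rho^{(\gamma-1)(N+1)+1}G_{N,\tau}^2\,dx$.

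Next we invoke \lemref{lem-GN}, applied with the Hardy inequality \lemref{lem-Hardy}, to convert weighted norms of $G_{N-1,\tau}$, $G_N$ and $G_{N,\tau}$ into control of $x^2\partial_x^{j+1}$ and $x\partial_x^j$ of $\eta$ and $\eta_\tau$. Summing over $j\le N$ together with the lower-order levels recovers $\mathcal{E}_N$, $\mathcal{E}_N^{(1)}$, $\mathcal{E}_N^{(2)}$ and $\mathcal{D}_N$ on the left-hand side.

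The remaining work, and the main obstacle, is the nonlinear and commutator remainder. There are three sources:
\begin{enumerate}[label=(\roman*)]
\item the Taylor remainder of $(1+\eta)^{-2\gamma}(1+\eta+x\eta_x)^{-\gamma}-1+\gamma(x\eta_x+3\eta)$;
\item the terms $c_1\alpha^{3-3\gamma}\tilde\alpha(1+\eta)x\eta$ and $-4\bar\mu(\ldots)\eta_\tau$;
\item the commutators $[\mathcal{T}_N,\bar\rho^{-1}\partial_x\bar\rho^\gamma]$.
\end{enumerate}
For each term we distribute derivatives. When at most $N_0-3$ derivatives fall on a factor, that factor is placed in $L^\infty$ with the weights of \eqref{def-apriori-assumx}--\eqref{def-apriori-assumt}; the highest factor stays in weighted $L^2$. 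Whenever the weights do not match, we shift powers of $\bar\rho^{\gamma-1}$ via Hardy, which is where $N_0\ge 4+\lceil\frac{1}{\gamma-1}\rceil$ is needed.

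The key point is the bookkeeping in time. Terms quadratic in $\eta$ carry the pressure weight $\alpha^{3-3\gamma+r_1}\tilde\alpha$ and a factor $\epsilon$. Terms involving $\eta_\tau$ paired with pressure-level quantities must be shown to decay at the rate $\alpha^{-\min\{\frac{3\gamma-3}{2},1\}}$ relative to $\mathcal{E}_N$. For this we use \eqref{def-apriori-assum1} to place $\alpha^{\min\{\frac{3\gamma-3}{2},1\}}\eta_\tau$ in $L^\infty$, and Cauchy--Schwarz to split $\alpha^{1+r_1}$ against $\alpha^{4-3\gamma+r_1}$, giving the integrable coefficient $\alpha^{-\min\{\cdot\}}$. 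Viscous lower-order terms are absorbed by $\mathcal{D}_N$ via Young's inequality, at the cost of $C(2\bar\mu+\bar\lambda)\alpha^{-1}\mathcal{E}_N$. The delicate points are the cross term identified in the heuristic discussion (which $\mathcal{T}_N$ is designed to cancel) and the term where $(\bar\rho^\gamma)_x$ hits the highest derivative. The latter we handle with the identity $(\bar\rho^\gamma)_x=-\frac{c_1}{A}\bar\rho x$, which turns it into a lower-order term with the correct weight.

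Finally, we integrate in time and add the induction hypothesis for $N-1$. Lower-order contributions are already bounded by $\mathcal{E}_{N-1}\le\mathcal{E}_N$, so they are absorbed into the two integral terms on the right-hand side of \eqref{ineq-energy1}, which completes the estimate.
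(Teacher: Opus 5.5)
Your proposal is correct and follows essentially the same route as the paper. The paper likewise applies $x\partial_x^N+(N+1)\partial_x^{N-1}$ to \eqref{eq-Lag-high} and tests against $\alpha^{r_1}\bar\rho^{(\gamma-1)N+1}G_{N-1,\tau}$. It recasts the pressure and viscous terms in divergence form with weight $\bar\rho^{(\gamma-1)(N+1)+1}$ (cf.\ \eqref{tran-L2}, \eqref{tran-L3}), so that integration by parts yields $G_NG_{N,\tau}$ and $G_{N,\tau}^2$. It then closes with \lemref{lem-GN}, Hardy, \eqref{def-apriori-assum1}, and the $\alpha^{\frac{3-3\gamma}{2}}$ gain from the mismatch between the kinetic and potential time weights.

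The one thing to adjust is your remark about dividing by $(1+\eta)^2$. Keep that factor on the pressure and viscous side, as in \eqref{eq-Lag-high}, where the paper carries it inside the divergence form. Moving it onto $\alpha x\eta_{\tau\tau}$ would break the exact time-derivative structure you rely on and would generate commutators involving $\eta_{\tau\tau}$.
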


\begin{remark}
It should be noted that $\mathcal{E}_N^{(1)}(\tau)$ and $\mathcal{E}_N^{(2)}(\tau)$ are not used to control any lower-order terms in the derivation of the above energy inequality \eqref{ineq-energy1} nor to verify the a priori assumptions \eqref{def-apriori-assumx}--\eqref{def-apriori-assum1} in the subsequent proof. Therefore, uniform estimates with respect to $r_1$ can be derived. The estimates of $\mathcal{E}_N^{(1)}(\tau)$ or $\mathcal{E}_N^{(2)}(\tau)$ are merely byproducts in the special cases $r_1<1$ or $r_1<3\gamma-4$, respectively.

\end{remark}

\begin{remark} \label{rmk-visc2}
Observe that $$2\bar\mu+\bar\lambda=\frac{2\bar\mu + 3\bar\lambda}{3} +\frac{4}{3}\bar\mu,$$
this together with the positive definiteness of the viscosity coefficients $\bar\mu, 2\bar\mu + 3\bar\lambda$ in \eqref{vis-coeff} implies that as both $\bar\mu$ and $2\bar\mu+3\bar\lambda$ tend to 0,
$$2\bar\mu+\bar\lambda \to 0.$$ 
Consequently, the above energy estimates are all uniform with respect to bounded $\bar\mu$ and $2\bar\mu+3\bar\lambda$.
\end{remark}
\subsection{Estimates on lower order terms}
Before performing higher-order estimates for the perturbed equation, we first establish several preliminary estimates on lower-order terms—particularly those involving Jacobians—under the assumption \eqref{def-apriori-assumx} and \eqref{def-apriori-assumt}.

\begin{lemma} \label{lem-Jacobx}
	Under the a priori assumption \eqref{def-apriori-assumx}, it holds that 
	\begin{enumerate}[label = \rmfamily(\roman*),ref = \rmfamily(\roman*)]
	\item For $1 \le s \le N_0+1 $,
	\begin{equation}
		\label{ineq-Jacobx0}
    \begin{aligned}
		&\abs{\partial_x^{s} [ (1+\eta)^{\nu_1} (1+\eta+x\eta_x)^{\nu_2} ] }\\
        =& \mathcal{O}(1) \Big[ x \abs{ \partial_x^{s+1} \eta} + \abs{ \partial_x^{s} \eta} + \epsilon \sum_{j=3}^{s} \bar\rho^{(\gamma-1)(j-s-\frac{1}{2} )}(x\abs{\partial_x^{j} \eta} + \abs{ \partial_x^{j-1} \eta } ) \Big]\\
            &+\mathcal{O}(1)\epsilon  \bar\rho^{(\gamma-1)(\frac{-s+1}{2} )}(x\abs{\partial_x^{2} \eta} + \abs{ \partial_x \eta } ) .     
    \end{aligned}
	\end{equation}	
		
	\item For $1 \le s \le N_0$,
	\begin{align}
    &\begin{aligned} \label{ineq-Jacobx1}
		&\partial_x^{s} [ (1+\eta)^{\nu_1} (1+\eta+x\eta_x)^{\nu_2} x \eta_{xx} ] \\
       =&(1+\eta)^{\nu_1} (1+\eta+x\eta_x)^{\nu_2}(x\partial_x^{s+2} \eta + s\partial_x^{s+1} \eta) \\
        &+\mathcal{O}(1) \epsilon \Big[ \sum_{j=3}^{s+1} \bar\rho^{(\gamma-1)(j-s-\frac{3}{2})}(x\abs{\partial_x^{j}  \eta} + \abs{\partial_x^{j-1}  \eta}) +  \bar\rho^{(\gamma-1)\frac{-s}{2}} x\abs{\partial_x^{2} \eta}   \Big],
        \end{aligned} 
        \\
\intertext{and}		
    &\begin{aligned} \label{ineq-Jacobx2}
		&\partial_x^{s} [ (1+\eta)^{\nu_1} (1+\eta+x\eta_x)^{\nu_2} \eta_{x} ]  \\ 
        =&(1+\eta)^{\nu_1} (1+\eta+x\eta_x)^{\nu_2}\partial_x^{s+1} \eta \\
        &+\mathcal{O}(1) \epsilon \Big[ \sum_{j=3}^{s+1} \bar\rho^{(\gamma-1)(j-s-\frac{3}{2})} \abs{\partial_x^{j-1}  \eta} +  \bar\rho^{(\gamma-1)\frac{-s}{2} } \abs{ \partial_x \eta } \Big].\\
	\end{aligned}
	\end{align}
	
    \end{enumerate}
Here and thereafter, we use the convention that $\sum_{j=j_1}^{j_2}f=0$ for $j_2<j_1$.
\end{lemma}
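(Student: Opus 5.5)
The plan is to expand the derivatives with the Faà di Bruno / Leibniz formula and bound each factor using the a priori assumption \eqref{def-apriori-assumx}. Set $J:=(1+\eta)^{\nu_1}(1+\eta+x\eta_x)^{\nu_2}$ and write it as $\Phi(\eta, x\eta_x)$ with $\Phi$ smooth near $(0,0)$; by \eqref{def-apriori-assumx} and \eqref{ineq-eta} all derivatives of $\Phi$ are $\mathcal{O}(1)$. Then $\partial_x^s J$ is a sum of terms
$$
\partial^m\Phi \cdot \prod_{i=1}^m \partial_x^{k_i} w_i,\qquad w_i\in\{\eta, x\eta_x\},\quad k_1+\dots+k_m=s,\ k_i\ge1 .
$$
For any $k\ge1$ we have $\partial_x^k(x\eta_x)=x\partial_x^{k+1}\eta+k\partial_x^k\eta$, so each factor is bounded by $x|\partial_x^{k+1}\eta|+|\partial_x^k\eta|$.

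\emph{Part (i).} The term with $m=1$ and $k_1=s$ gives exactly the leading contribution $x|\partial_x^{s+1}\eta|+|\partial_x^s\eta|$. Every other term has $m\ge2$ and all $k_i\le s-1$. In such a product I keep the factor of largest order $k^\ast$ in $L^2$-type form, i.e.\ with the index $j=k^\ast+1$ it reads $x|\partial_x^j\eta|+|\partial_x^{j-1}\eta|$. The remaining factors go into $L^\infty$ with the weights of \eqref{def-apriori-assumx}: for $2\le k\le N_0-3$,
$$
|\partial_x^k\eta|\le \epsilon\,\bar\rho^{-(\gamma-1)(k-\frac32)},
$$
for $k=1$ the factor is simply $\le\epsilon$, and the top factor $x\partial_x^{N_0-2}\eta$ is handled likewise. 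The next step is to sum the weights. Since the non-kept factors have total order $s-k^\ast=s-j+1$, each costs at most the exponent $(\gamma-1)(k-\frac12)$ of $\bar\rho^{-1}$ (conservatively), and $\bar\rho\le C$, I would show the accumulated exponent is at most $(\gamma-1)(s-j+\frac12)$, which is the claimed $\bar\rho^{(\gamma-1)(j-s-\frac12)}$. The cases with $k^\ast\le1$ (all factors first order) produce the last term $\epsilon\,\bar\rho^{(\gamma-1)\frac{-s+1}{2}}(x|\partial_x^2\eta|+|\partial_x\eta|)$. This bookkeeping is easier with the cruder bound $\bar\rho^{-(\gamma-1)(k-1)/2}$ per order, which follows from the assumption interpolated against $\bar\rho\le C$.

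\emph{Main obstacle.} The difficulty is that when $s$ is large, the largest-order factor may have order beyond $N_0-3$ while a second factor also has order $\ge N_0-2$, so that neither can be put in $L^\infty$. Since $\sum k_i=s\le N_0+1$ and $N_0\ge6$, at most one factor can have order $> (N_0+1)/2 \le N_0-3$. Hence every factor except the top one has order $\le N_0-3$ and is covered by the $L^\infty$ assumption. I would check this counting carefully, including the factors involving $x\partial_x^{k+1}\eta$, where the assumption's extra $x\partial_x^{N_0-2}\eta$ term is needed.

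\emph{Part (ii).} Apply Leibniz to $J\cdot x\eta_{xx}$ and to $J\cdot\eta_x$. The term with all derivatives on the second factor gives $J(x\partial_x^{s+2}\eta+s\partial_x^{s+1}\eta)$, respectively $J\partial_x^{s+1}\eta$. Every other term carries $\partial_x^{l}J$ with $l\ge1$, which is bounded by part (i) at level $l$, multiplied by $\partial_x^{s-l}(x\eta_{xx})$. One of the two factors has low order and goes to $L^\infty$, producing the factor $\epsilon$ since $\partial_x^l J$ always contains at least one small factor. Reindexing $j$ then gives the range $3\le j\le s+1$ with weight $\bar\rho^{(\gamma-1)(j-s-\frac32)}$, where the weight is worse by $\frac12$ because the leading part is no longer present. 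The extreme case where all derivatives land on $J$ is bounded by $\epsilon\,\bar\rho^{-(\gamma-1)s/2}x|\partial_x^2\eta|$.
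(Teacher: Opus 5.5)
Your argument for (i) is correct once the small fixes below are made, and it is organized differently from the paper's.

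\textbf{Comparison with the paper.} The paper proves \eqref{ineq-Jacobx0} by induction on $s$. It checks $s=1,2,3$ by hand, writes $\partial_x^{s_0+1}J=\partial_x^{s_0}(\partial_xJ)$, expands with Leibniz, and substitutes the induction hypothesis for the lower derivatives of the Jacobian-type factors. It then estimates three families of products separately. The counting that uses $N_0\ge 6$ is that $s_0-i+2\le N_0-2$ once $i\ge\lceil\frac{s_0+1}{2}\rceil$, so the complementary factor can be put in $L^\infty$.

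You instead expand once by the two-variable Fa\`a di Bruno formula and estimate each monomial by keeping its top-order factor. This makes the weights transparent. If the orders are $k^\ast=j-1\ge k_2\ge\dots\ge k_m$ with $m\ge 2$, the $L^\infty$ factors cost $\sum_{i\ge2}(k_i-\frac12)=(s-j+1)-\frac{m-1}{2}\le s-j+\frac12$, which is exactly the exponent in \eqref{ineq-Jacobx0}. The same scheme also handles (ii) uniformly if you treat $x\eta_{xx}$ (resp.\ $\eta_x$) as one more first-order factor. The total order is then $s+1\le N_0+1$, and the count gives $3\le j\le s+1$ with weight $\bar\rho^{(\gamma-1)(j-s-\frac32)}$ directly. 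The induction, for its part, avoids the combinatorics of the general formula and builds each level from bounds already established.

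\textbf{Things to fix.}

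First, the inequality $(N_0+1)/2\le N_0-3$ is false for $N_0=6$, and that case does occur (whenever $\gamma\ge\frac32$). Use integrality instead: the second-largest order is at most $\lfloor s/2\rfloor\le\lfloor\frac{N_0+1}{2}\rfloor\le N_0-3$.

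Second, drop the remark about the ``cruder'' bound $\bar\rho^{-(\gamma-1)(k-1)/2}$. For $k\ge 2$ it is stronger, not weaker, than what \eqref{def-apriori-assumx} provides. For example, the order-two factor $\partial_x^2(x\eta_x)$ contains $x\partial_x^3\eta$, which is only bounded by $\epsilon\bar\rho^{-\frac32(\gamma-1)}$. The fact that $\bar\rho\le C$ only lets you make exponents more negative. Likewise, a first-order factor $\partial_x(x\eta_x)=x\eta_{xx}+\eta_x$ is $\mathcal{O}(\epsilon)\bar\rho^{-\frac12(\gamma-1)}$, not $\mathcal{O}(\epsilon)$; only $\eta_x$ itself is $\le\epsilon$. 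The $(k-\frac12)$ count is the one that works.

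Third, in (ii) the term with all derivatives on $J$ is not only $\epsilon\bar\rho^{-(\gamma-1)s/2}x|\partial_x^2\eta|$. The leading part of $\partial_x^sJ$ also produces $x\partial_x^{s+1}\eta\cdot x\eta_{xx}$, which becomes the $j=s+1$ term once $x\eta_{xx}$ is put in $L^\infty$.

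Fourth, for \eqref{ineq-Jacobx2} the cofactor of $\partial_x^lJ$ is $\partial_x^{s-l+1}\eta$, not $\partial_x^{s-l}(x\eta_{xx})$. The $l=s$ term then contains $\nu_2(1+\eta)^{\nu_1}(1+\eta+x\eta_x)^{\nu_2-1}\eta_x\,x\partial_x^{s+1}\eta$. For instance, with $s=2$, $\nu_1=0$, $\nu_2=1$, the remainder is exactly $x\eta_x\eta_{xxx}+7\eta_x\eta_{xx}+2x\eta_{xx}^2$. The term $x\eta_x\eta_{xxx}$ cannot be bounded by the right-hand side of \eqref{ineq-Jacobx2} as printed using \eqref{def-apriori-assumx}. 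That right-hand side has no $x|\partial_x^j\eta|$ terms, and the assumption only gives $|\eta_{xxx}|\lesssim\epsilon\bar\rho^{-\frac32(\gamma-1)}$ at points where $\eta_{xx}$ may vanish.

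What your argument proves, and what the paper's ``similar argument'' proves, is \eqref{ineq-Jacobx2} with $x|\partial_x^j\eta|+|\partial_x^{j-1}\eta|$ inside the sum, as in \eqref{ineq-Jacobx1}. That is also the form actually needed in \lemref{lem-Px}, so state it that way.
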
  
\begin{proof}

We prove \eqref{ineq-Jacobx0} by induction on $s$.

By virtue of \eqref{ineq-eta} and \eqref{def-apriori-assumx} for $\partial_x^2 \eta$,  it holds that
\begin{align*}		
& \abs{\partial_x [ (1+\eta)^{\nu_1} (1+\eta+x\eta_x)^{\nu_2} ] } =\mathcal{O}(1) ( x\abs{\partial_x^2 \eta} + \abs{\partial_x \eta}),\\
& \abs{\partial_x^{2} [ (1+\eta)^{\nu_1} (1+\eta+x\eta_x)^{\nu_2} ] } =\mathcal{O}(1) \Bigl[ x \abs{\partial_x^3 \eta} + \abs{\partial_x^2 \eta} + ( x\abs{\partial_x^2 \eta} + \abs{\partial_x \eta})^2 \Bigr] \\
&\qquad\qquad\qquad=\mathcal{O}(1) \Bigl[ x \abs{\partial_x^3 \eta} + \abs{\partial_x^2 \eta}+\epsilon  \bar\rho^{-\frac{(\gamma-1)}{2}}(x\abs{\partial_x^{2} \eta} + \abs{ \partial_x \eta } )\Bigr]  ,\\
& \abs{\partial_x^{3} [ (1+\eta)^{\nu_1} (1+\eta+x\eta_x)^{\nu_2} ] }\\
=& \mathcal{O}(1) \Bigl[ x \abs{\partial_x^4 \eta} + \abs{\partial_x^3 \eta} + (x \abs{\partial_x^3 \eta} + \abs{\partial_x^2 \eta}) ( x\abs{\partial_x^2 \eta} + \abs{\partial_x\eta} ) + ( x\abs{\partial_x^2 \eta} + \abs{\partial_x\eta} )^3 \Bigr] \\
=& \mathcal{O}(1) \Bigl[ x \abs{\partial_x^4 \eta} + \abs{\partial_x^3 \eta} + \epsilon\bar\rho^{-\frac{(\gamma-1)}{2}}(x \abs{\partial_x^3 \eta} + \abs{\partial_x^2 \eta})  + \epsilon \bar\rho^{-(\gamma-1)} ( x\abs{\partial_x^2 \eta} + \abs{\partial_x\eta} ) \Bigr]  .
\end{align*}
Hence, \eqref{ineq-Jacobx0} holds for $s=1,2,3$. 

Assume now that for all $1 \le s \le s_0,$ where $s_0 \le N_0$, \eqref{ineq-Jacobx0} holds. Then direct computation shows that
\begin{align*}
    &\partial_x^{s_0+1} [ (1+\eta)^{\nu_1} (1+\eta+x\eta_x)^{\nu_2} ]\\
   =&\partial_x^{s_0} [ \nu_1 (1+\eta)^{\nu_1-1} (1+\eta+x\eta_x)^{\nu_2} \eta_x + \nu_2(1+\eta)^{\nu_1} (1+\eta+x\eta_x)^{\nu_2-1}(x\eta_{xx}+2\eta_x)]\\
   =&  \nu_1 (1+\eta)^{\nu_1-1} (1+\eta+x\eta_x)^{\nu_2} \partial_x^{s_0+1}\eta \\
    & + \nu_2(1+\eta)^{\nu_1} (1+\eta+x\eta_x)^{\nu_2-1}\big( x\partial_x^{s_0+2}\eta+(s_0+2)\partial_x^{s_0+1}\eta \big)\\
    & + \nu_1 \sum_{i=1}^{s_0}  \partial_x^{i} [ (1+\eta)^{\nu_1-1} (1+\eta+x\eta_x)^{\nu_2} ] \partial_x^{s_0-i+1}\eta \\
    & + \nu_2 \sum_{i=1}^{s_0}  \partial_x^{i} [(1+\eta)^{\nu_1} (1+\eta+x\eta_x)^{\nu_2-1}] \big( x\partial_x^{s_0-i+2}\eta+(s_0-i+2)\partial_x^{s_0-i+1}\eta \big).
\end{align*} 
Applying \eqref{ineq-Jacobx0} for $1\leq i\leq s_0$ in the above equality, we obtain 
\begin{align*}
    &\abs{ \partial_x^{s_0+1} [ (1+\eta)^{\nu_1} (1+\eta+x\eta_x)^{\nu_2} ] }\\
   =& \mathcal{O}(1) \Big[x \abs{ \partial_x^{s_0+2} \eta} + \abs{ \partial_x^{s_0+1} \eta} \Big] \\
    & +\mathcal{O}(1)\sum_{i=1}^{s_0} \Big[ x \abs{ \partial_x^{i+1} \eta} + \abs{ \partial_x^{i} \eta} + \epsilon \sum_{j=3}^{i} \bar\rho^{(\gamma-1)(j-i-\frac{1}{2} )}(x\abs{\partial_x^{j} \eta} + \abs{ \partial_x^{j-1} \eta } ) \\
   &\qquad\qquad\quad+ \epsilon  \bar\rho^{(\gamma-1)(\frac{-i+1}{2} )}(x\abs{\partial_x^{2} \eta} + \abs{ \partial_x \eta }) \Big] \cdot \big( x\abs{\partial_x^{s_0-i+2}\eta} + \abs{\partial_x^{s_0-i+1}\eta} \big) .
\end{align*}
The summations of the three terms in the above square bracket are treated slightly differently in the following estimates.
Denote $k_0:=\lceil \frac{s_0+1}{2} \rceil$. Since $N_0\geq 6$, then $\frac{s_0+1}{2}>s_0-N_0+3$, and thus $s_0-i+2\leq N_0-2$ for $i\geq k_0$. Hence, by symmetry and \eqref{def-apriori-assumx}, we have 
\begin{align}
&\sum_{i=1}^{s_0} \big( x \abs{ \partial_x^{i+1} \eta} + \abs{ \partial_x^{i} \eta} \big)  \big( x\abs{\partial_x^{s_0-i+2}\eta} + \abs{\partial_x^{s_0-i+1}\eta} \big) \notag\\
\leq &C \epsilon \sum_{i=k_0}^{s_0} \bar\rho^{(\gamma-1)(i-s_0-\frac{1}{2} )} \big( x \abs{ \partial_x^{i+1} \eta} + \abs{ \partial_x^{i} \eta} \big)  \,. \label{ineq-Jacobx01}
\end{align}
For the 2nd term, noting that $\sum_{j=j_1}^{j_2}f=0$ for $j_2<j_1$, and $s_0-i+2 \leq N_0-2$ for $i>3$, we divide it into $i=3$ and $i>3$:  
\begin{align}
    &\sum_{i=1}^{s_0} \sum_{j=3}^{i} \bar\rho^{(\gamma-1)(j-i-\frac{1}{2} )}(x\abs{\partial_x^{j} \eta} + \abs{ \partial_x^{j-1} \eta } )  \big( x\abs{\partial_x^{s_0-i+2}\eta} + \abs{\partial_x^{s_0-i+1}\eta} \big) \notag\\
   \leq & \bar\rho^{(\gamma-1)(-\frac{1}{2} )}(x\abs{\partial_x^{3} \eta} + \abs{ \partial_x^{2} \eta } )  ( x\abs{\partial_x^{s_0-1}\eta} + \abs{\partial_x^{s_0-2}\eta} ) \notag \\
   &\qquad\qquad+ C\epsilon \sum_{i=4}^{s_0} \sum_{j=3}^{i} \bar\rho^{(\gamma-1)(j-i-\frac{1}{2} )}(x\abs{\partial_x^{j} \eta} + \abs{ \partial_x^{j-1} \eta } ) \bar\rho^{(\gamma-1)(i-s_0-\frac{1}{2} )} \notag \\
   \leq &C\epsilon \bar\rho^{(\gamma-1)(-2)} ( x\abs{\partial_x^{s_0-1}\eta} + \abs{\partial_x^{s_0-2}\eta} ) \notag \\
   &\qquad\qquad+ C \epsilon\sum_{j=3}^{s_0} \bar\rho^{(\gamma-1)(j-s_0-1)}(x\abs{\partial_x^{j} \eta} + \abs{ \partial_x^{j-1} \eta } )\notag \\
   \leq & C \epsilon\sum_{j=3}^{s_0} \bar\rho^{(\gamma-1)(j-s_0-1)}(x\abs{\partial_x^{j} \eta} + \abs{ \partial_x^{j-1} \eta } ). \label{ineq-Jacobx02}
\end{align}
For the 3rd term,   
\begin{align}
   & \sum_{i=1}^{s_0}   \epsilon  \bar\rho^{(\gamma-1)(\frac{-i+1}{2} )}(x\abs{\partial_x^{2} \eta} + \abs{ \partial_x \eta }) \cdot \big( x\abs{\partial_x^{s_0-i+2}\eta} + \abs{\partial_x^{s_0-i+1}\eta} \big) \notag \\
   \leq & C\epsilon  \sum_{i=1}^{s_0} \bar\rho^{(\gamma-1)(\frac{-i}{2}) } \big( x\abs{\partial_x^{s_0-i+2}\eta} + \abs{\partial_x^{s_0-i+1}\eta} \big)   \notag\\
   \leq & C\epsilon  \sum_{j=3}^{s_0+1} \bar\rho^{(\gamma-1)(\frac{j-s_0-2}{2}) } \big( x\abs{\partial_x^{j}\eta} + \abs{\partial_x^{j-1}\eta} \big) + C\epsilon  \bar\rho^{(\gamma-1)(\frac{-s_0}{2}) } \big( x\abs{\partial_x^{2}\eta} + \abs{\partial_x \eta} \big). \label{ineq-Jacobx03}
\end{align}
Combining \eqref{ineq-Jacobx01}-\eqref{ineq-Jacobx03}, we have 
\begin{align*}
    &\abs{ \partial_x^{s_0+1} [ (1+\eta)^{\nu_1} (1+\eta+x\eta_x)^{\nu_2} ] }\\
=& \mathcal{O}(1) \Big[ x \abs{ \partial_x^{s_0+2} \eta} + \abs{ \partial_x^{s_0+1} \eta} + \epsilon \sum_{j=3}^{s_0+1} \bar\rho^{(\gamma-1)(j-s_0-\frac{3}{2} )}(x\abs{\partial_x^{j} \eta} + \abs{ \partial_x^{j-1} \eta } ) \Big]\\
            &+\mathcal{O}(1)\epsilon  \bar\rho^{(\gamma-1)(\frac{-s_0}{2} )}(x\abs{\partial_x^{2} \eta} + \abs{ \partial_x \eta } ),
\end{align*}
which proves \eqref{ineq-Jacobx0}.  A similar argument yields \eqref{ineq-Jacobx1} and \eqref{ineq-Jacobx2}.

\end{proof}

\begin{lemma} \label{lem-Jacobt}
	Under the a priori assumption \eqref{def-apriori-assumt}, it holds that, for $1 \le s \le N_0+1$,
    \begin{align}
    &\begin{aligned} &\label{ineq-Jacobt1}
		\partial_x^{s}  [ (1+\eta)^{\nu_1} (1+\eta+x\eta_x)^{\nu_2}  x\eta_{x\tau} ] \\
		  = &(1+\eta)^{\nu_1} (1+\eta+x\eta_x)^{\nu_2} ( x \partial_x^{s+1} \eta_\tau +s \partial_x^{s} \eta_\tau )  \\
          & + \nu_2 (1+\eta)^{\nu_1} (1+\eta+x\eta_x)^{\nu_2-1} [ x \partial_x^{s+1}\eta + (s+1)\partial_x^{s}\eta ] \cdot x\eta_{x\tau}\\
        & + \nu_1(1+\eta)^{\nu_1-1} (1+\eta+x\eta_x)^{\nu_2} \partial_x^{s}\eta \cdot x\eta_{x\tau}\\
         &+ \mathcal{O} (1) \epsilon \Big[\sum_{j=3}^{s} \bar\rho^{(\gamma-1)(j-s-\frac{1}{2} )} (   x\abs{ \partial_x^{j} \eta_\tau } + \abs{ \partial_x^{j-1} \eta_\tau } +  x \abs{ \partial_x^{j} \eta} +  \abs{ \partial_x^{j-1} \eta})\Big]\\
         &+\mathcal{O} (1) \epsilon \bar\rho^{(\gamma-1)(\frac{-s+1}{2} )}(x\abs{\partial_x^{2} \eta_\tau} + \abs{ \partial_x \eta_\tau }+ x\abs{\partial_x^{2} \eta} + \abs{ \partial_x \eta } ) ,
    \end{aligned}
\intertext{and}
   & \begin{aligned} & \label{ineq-Jacobt0}
		\partial_x^{s}  [ (1+\eta)^{\nu_1} (1+\eta+x\eta_x)^{\nu_2}  \eta_{\tau} ]  \\
          = & (1+\eta)^{\nu_1} (1+\eta+x\eta_x)^{\nu_2}  \partial_x^{s} \eta_\tau   \\
          & + \nu_2 (1+\eta)^{\nu_1} (1+\eta+x\eta_x)^{\nu_2-1} [ x \partial_x^{s+1}\eta + (s+1)\partial_x^{s}\eta ] \cdot \eta_{\tau}\\
        & + \nu_1(1+\eta)^{\nu_1-1} (1+\eta+x\eta_x)^{\nu_2} \partial_x^{s}\eta \cdot \eta_{\tau}\\
         &+ \mathcal{O} (1) \epsilon \Big[\sum_{j=3}^{s} \bar\rho^{(\gamma-1)(j-s-\frac{1}{2} )} (    \abs{ \partial_x^{j-1} \eta_\tau } +  x \abs{ \partial_x^{j} \eta} +  \abs{ \partial_x^{j-1} \eta}) \Big]\\
         &+ \mathcal{O} (1) \epsilon\bar\rho^{(\gamma-1)(\frac{-s+1}{2} )}( \abs{ \partial_x \eta_\tau }+ x\abs{\partial_x^{2} \eta} + \abs{ \partial_x \eta } ).
    \end{aligned}
	\end{align}
\end{lemma}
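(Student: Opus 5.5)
Write $J:=(1+\eta)^{\nu_1}(1+\eta+x\eta_x)^{\nu_2}$. The plan is to expand both left-hand sides with the Leibniz rule and then reuse \lemref{lem-Jacobx}.

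For \eqref{ineq-Jacobt1}, first compute the derivatives of $x\eta_{x\tau}$ directly:
\[
\partial_x^{k}(x\eta_{x\tau})=x\partial_x^{k+1}\eta_\tau+k\partial_x^{k}\eta_\tau .
\]
Then
\[
\partial_x^{s}[J\,x\eta_{x\tau}]=\sum_{i=0}^{s}\binom{s}{i}\,\partial_x^{i}J\cdot\big(x\partial_x^{s-i+1}\eta_\tau+(s-i)\partial_x^{s-i}\eta_\tau\big).
\]
The three explicit terms in the statement come from two places. The $i=0$ term gives $J\,(x\partial_x^{s+1}\eta_\tau+s\partial_x^{s}\eta_\tau)$. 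The leading top-order part of the $i=s$ term gives the other two: $\partial_x^s J$ contains $\nu_2(1+\eta)^{\nu_1}(1+\eta+x\eta_x)^{\nu_2-1}\,[x\partial_x^{s+1}\eta+(s+1)\partial_x^s\eta]$ together with $\nu_1(1+\eta)^{\nu_1-1}(1+\eta+x\eta_x)^{\nu_2}\partial_x^s\eta$, and these multiply $x\eta_{x\tau}$. This is the same computation as in the induction step of \lemref{lem-Jacobx}, where $\partial_x^{s}J$ was written as its leading part plus $\sum_{i<s}\partial_x^i(\cdot)\,\partial_x^{s-i}(\cdot)$.

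The remaining part of $\partial_x^s J$ is bounded by \eqref{ineq-Jacobx0} with index $s-1$ and $\epsilon$-weighted lower terms, then multiplied by $|x\eta_{x\tau}|\le\epsilon$ from \eqref{def-apriori-assumt}. This yields the $\eta$-terms in the error with weights $\bar\rho^{(\gamma-1)(j-s-\frac12)}$ and $\bar\rho^{(\gamma-1)\frac{-s+1}{2}}$.

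The intermediate terms $1\le i\le s-1$ are products of $\partial_x^iJ$ and $(x\partial_x^{s-i+1}\eta_\tau+\partial_x^{s-i}\eta_\tau)$. I will treat them exactly as in \eqref{ineq-Jacobx01}--\eqref{ineq-Jacobx03}, splitting at $k_0=\lceil s/2\rceil$ so that one factor always carries at most $N_0-2$ derivatives.

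For the factor with fewer derivatives I use the $L^\infty$ bounds. When that factor is the $\eta_\tau$-factor, I use \eqref{def-apriori-assumt}, giving $\epsilon\bar\rho^{(\gamma-1)(s-i-\frac32)}$ or better. When it is $\partial_x^iJ$, I use \eqref{ineq-Jacobx0} together with \eqref{def-apriori-assumx}, giving $|\partial_x^iJ|\le C\epsilon\bar\rho^{(\gamma-1)(i-\frac32)}$, where the top-order $x\partial_x^{i+1}\eta$ is controlled by the last term of \eqref{def-apriori-assumx} when $i=N_0-3$.

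The other factor stays in absolute value, and the exponents add up to $\bar\rho^{(\gamma-1)(j-s-\frac12)}$ after relabelling $j$. Where $\partial_x^iJ$ keeps its expansion, the nested sums collapse as in \eqref{ineq-Jacobx02}, using $\bar\rho\le C$ so that more negative exponents dominate. Terms involving only $\partial_x\eta_\tau$, $x\partial_x^2\eta_\tau$ or $\partial_x\eta$, $x\partial_x^2\eta$ are collected into the final $\bar\rho^{(\gamma-1)\frac{-s+1}{2}}$ term, as in \eqref{ineq-Jacobx03}.

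The proof of \eqref{ineq-Jacobt0} is identical with $x\eta_{x\tau}$ replaced by $\eta_\tau$. The derivatives become $\partial_x^{k}\eta_\tau$, so only $|\partial_x^{j-1}\eta_\tau|$ appears in the error. The $i=s$ term again gives the two displayed terms multiplied by $\eta_\tau$.

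The main obstacle is index bookkeeping at the top order $s=N_0+1$. There one factor may have up to $N_0+2$ derivatives, so I must make sure every such product pairs with a factor of order at most $N_0-2$ that is bounded in $L^\infty$. The inequality $\frac{s+1}{2}>s-N_0+3$, which follows from $N_0\ge6$, is exactly what guarantees this. I will also check that the weights do not go below the stated exponents, arguing as in the proof of \lemref{lem-Jacobx}.
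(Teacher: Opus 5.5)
Your proposal is essentially the paper's proof. The Leibniz expansion gives the $i=0$ term, and the leading part of $\partial_x^s J$ (after splitting off the commutators $[\partial_x^{s-1},\cdot\,]$) produces the explicit terms. Every remaining product is then controlled by \eqref{ineq-Jacobx0} and the $L^\infty$ a priori bounds, with $N_0\ge 6$ guaranteeing that one factor in each product is bounded.

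Two bookkeeping slips should be fixed.

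First, the a priori assumptions give negative powers of $\bar\rho$. For $2\le k\le N_0-3$ you have $|\partial_x^k\eta|,|\partial_x^k\eta_\tau|\le\epsilon\bar\rho^{-(\gamma-1)(k-\frac32)}$. Hence $|\partial_x^iJ|\le C\epsilon\bar\rho^{-(\gamma-1)(i-\frac12)}$ and the $\eta_\tau$-factor is $\le C\epsilon\bar\rho^{-(\gamma-1)(s-i-\frac12)}$, not the positive powers you wrote. Only with these signs do the weights combine to $\bar\rho^{(\gamma-1)(j-s-\frac12)}$.

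Second, at $s=N_0+1$ with $N_0=6$, the inequality $\frac{s+1}{2}>s-N_0+3$ is an equality, so it does not follow from $N_0\ge 6$. What your split at $k_0=\lceil s/2\rceil$ actually needs is $\lceil s/2\rceil\ge s-N_0+3$ and $\lceil s/2\rceil-1\le N_0-3$. Both hold for all $s\le N_0+1$ when $N_0\ge 6$.
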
  

\begin{proof} 
For $1 \le s \le N_0+1$, one can get
\begin{align*}
         &  \partial_x^{s}  [ (1+\eta)^{\nu_1} (1+\eta+x\eta_x)^{\nu_2}  x\eta_{x\tau} ]  \\
        =& \sum_{i=0}^{s} C_s^i \partial_x^{i} [ (1+\eta)^{\nu_1} (1+\eta+x\eta_x)^{\nu_2} ]  \cdot \partial_x^{s-i} (x\eta_{x\tau}) \\
        =& (1+\eta)^{\nu_1} (1+\eta+x\eta_x)^{\nu_2} ( x \partial_x^{s+1} \eta_\tau +s \partial_x^{s} \eta_\tau ) +\partial_x^{s} [ (1+\eta)^{\nu_1} (1+\eta+x\eta_x)^{\nu_2} ] \cdot x\eta_{x\tau} \\
        &+ \sum_{i=1}^{s-1} C_s^i\partial_x^{i} [ (1+\eta)^{\nu_1} (1+\eta+x\eta_x)^{\nu_2} ]  \cdot \partial_x^{s-i} (x\eta_{x\tau}).
\end{align*}
Direct computations show that
    \begin{align*}
         & \partial_x^{s} [ (1+\eta)^{\nu_1} (1+\eta+x\eta_x)^{\nu_2} ]\\
        =& \partial_x^{s-1}[ \nu_2 (1+\eta)^{\nu_1} (1+\eta+x\eta_x)^{\nu_2-1} ( x \eta_{xx} + 2\eta_x) + \nu_1(1+\eta)^{\nu_1-1} (1+\eta+x\eta_x)^{\nu_2} \eta_x  ]\\
        = &  \nu_2 (1+\eta)^{\nu_1} (1+\eta+x\eta_x)^{\nu_2-1} [ x \partial_x^{s+1}\eta + (s+1)\partial_x^{s}\eta ] \\
        & + \nu_1(1+\eta)^{\nu_1-1} (1+\eta+x\eta_x)^{\nu_2} \partial_x^{s}\eta \\
        & + \nu_2 [ \partial_x^{s-1},  (1+\eta)^{\nu_1} (1+\eta+x\eta_x)^{\nu_2-1} ] ( x \eta_{xx} + 2\eta_x) \\
        & + \nu_1 [ \partial_x^{s-1}, (1+\eta)^{\nu_1-1} (1+\eta+x\eta_x)^{\nu_2}] \eta_x,
    \end{align*}
    where  $$[\partial, f]g=\partial (fg)-f\partial g$$
    denotes the standard commutator.
    It then follows from \eqref{ineq-Jacobx0} that
    \begin{align*}
    & \partial_x^{s} [ (1+\eta)^{\nu_1} (1+\eta+x\eta_x)^{\nu_2} ]\\
         = &  \nu_2 (1+\eta)^{\nu_1} (1+\eta+x\eta_x)^{\nu_2-1} [ x \partial_x^{s+1}\eta + (s+1)\partial_x^{s}\eta ] \\
        & + \nu_1(1+\eta)^{\nu_1-1} (1+\eta+x\eta_x)^{\nu_2} \partial_x^{s}\eta \\
        & + \mathcal{O} (1) \epsilon \Big[  \sum_{j=3}^{s} \bar\rho^{(\gamma-1)(j-s-\frac{1}{2} )} (x \abs{ \partial_x^j \eta } +  \abs{ \partial_x^{j-1} \eta }) + \bar\rho^{(\gamma-1)(\frac{-s+1}{2} )}( x\abs{\partial_x^{2} \eta} + \abs{ \partial_x \eta } ) \Big],
\intertext{and}
& \sum_{i=1}^{s-1} C_s^i \partial_x^{i} [ (1+\eta)^{\nu_1} (1+\eta+x\eta_x)^{\nu_2} ]  \cdot \partial_x^{s-i} (x\eta_{x\tau})\\  
= &\mathcal{O}(1) \sum_{i=1}^{s-1} \Big[ x \abs{ \partial_x^{s-i+1} \eta} + \abs{ \partial_x^{s-i} \eta} + \epsilon \sum_{j=3}^{s-i} \bar\rho^{(\gamma-1)(j-s+i-\frac{1}{2} )}(x\abs{\partial_x^{j} \eta} + \abs{ \partial_x^{j-1} \eta } )\\
            & \qquad+\mathcal{O}(1)\epsilon  \bar\rho^{(\gamma-1)(\frac{-s+i+1}{2} )}(x\abs{\partial_x^{2} \eta} + \abs{ \partial_x \eta } ) \Big] (x\abs{\partial_x^{i+1}\eta_\tau} + i \abs{\partial_x^{i}\eta_\tau}).
\intertext{Therefore, note that $N_0 \ge 6$, one has}
         &  \partial_x^{s}  [ (1+\eta)^{\nu_1} (1+\eta+x\eta_x)^{\nu_2}  x\eta_{x\tau} ]  \\
         =&  (1+\eta)^{\nu_1} (1+\eta+x\eta_x)^{\nu_2} ( x \partial_x^{s+1} \eta_\tau +s \partial_x^{s} \eta_\tau )  \\
         & + \nu_2 (1+\eta)^{\nu_1} (1+\eta+x\eta_x)^{\nu_2-1} [ x \partial_x^{s+1}\eta + (s+1)\partial_x^{s}\eta ] \cdot x\eta_{x\tau}\\
        & + \nu_1(1+\eta)^{\nu_1-1} (1+\eta+x\eta_x)^{\nu_2} \partial_x^{s}\eta \cdot x\eta_{x\tau}\\
         &+ \mathcal{O} (1) \epsilon \Big[\sum_{j=3}^{s} \bar\rho^{(\gamma-1)(j-s-\frac{1}{2} )} (   x\abs{ \partial_x^{j} \eta_\tau } + \abs{ \partial_x^{j-1} \eta_\tau } +  x \abs{ \partial_x^{j} \eta} +  \abs{ \partial_x^{j-1} \eta})\Big]\\
         & +\mathcal{O}(1) \epsilon\bar\rho^{(\gamma-1)(\frac{-s+1}{2} )}(x\abs{\partial_x^{2} \eta_\tau} + \abs{ \partial_x \eta_\tau }+ x\abs{\partial_x^{2} \eta} + \abs{ \partial_x \eta } ) \Big], 
	\end{align*}
	which is exactly \eqref{ineq-Jacobt1}. Similar computations also show	\eqref{ineq-Jacobt0}.
\end{proof}

Building upon the above estimates, we now proceed to separate out the higher- and lower-order terms in the derivatives of the perturbed pressure and the viscous stress tensor.

To handle the perturbed pressure, we set
\begin{equation}
	\label{def-Qfrak}
	\mathfrak{Q}:=(1+\eta)^{-2\gamma}(1+\eta+x\eta_x)^{-\gamma}-1.
\end{equation}

\begin{lemma} \label{lem-Px}
	Under the a priori assumption \eqref{def-apriori-assumx}, it holds that 
	\begin{enumerate}[label = \rmfamily(\roman*),ref = \rmfamily(\roman*)]
		\item For $1 \le s \le N_0+1$, it holds that
		\begin{equation}
			\label{ineq-Px1}
          \begin{aligned}
          	\partial_x^s \Qfrak= &- \gamma (1+\eta)^{-2\gamma} (1+\eta+x\eta_x)^{-\gamma-1} [x\partial_x^{s+1}\eta+(s+3)\partial_x^s\eta ]   \\ 
            &	+\mathcal{O}(1) \epsilon \Big[\sum_{j=3}^{s} \bar\rho^{(\gamma-1)(j-s-\frac{1}{2})} (x \abs{ \partial_x^j \eta } + \abs{ \partial_x^{j-1} \eta } )\\
            &\quad+\bar\rho^{(\gamma-1)\frac{-s+1}{2}}(x\abs{\partial_x^2\eta}+\abs{\partial_x\eta}) \Big].
          \end{aligned}
		\end{equation}	
		
		\item    
        For $1 \le N \le N_0$, it holds that
		\begin{equation}
			\label{ineq-Px2}
			\begin{aligned}
				&\partial_x \Big\{ \bigl[\rhobar^{(\gamma-1)(N+1)+1}   (x \partial_x^{N}+N \partial_x^{N-1}) \Qfrak \bigr] (1 + \eta)^2 \Big\} \\
			=	& - \gamma \partial_x \Big\{  \rhobar^{(\gamma-1)(N+1)+1}  (1+\eta)^{-2\gamma+2} (1+\eta+x\eta_x)^{-\gamma-1} G_N   \Big\}   \\ 
			  &+ \mathcal{O}(1) \epsilon  \sum_{j=3}^{N+1} \bar\rho^{(\gamma-1)(j -\frac{1}{2})+1} ( x^2 \abs{ \partial_x^j \eta } + x \abs{ \partial_x^{j - 1} \eta } ) \\
	    & 
	    +\mathcal{O}(1) \epsilon  \sum_{j=3}^{N} \bar\rho^{(\gamma-1)(j +\frac{1}{2})+1} ( x \abs{ \partial_x^j \eta } +  \abs{ \partial_x^{j - 1} \eta } ) \\
        &+\mathcal{O}(1) \epsilon \bar\rho^{(\gamma-1)\frac{N+1}{2}+1}(x\abs{\partial_x^2\eta}+\abs{\partial_x\eta}).
			\end{aligned}
		\end{equation}
    where $G_N$ is given in \eqref{def-GN}.
        \end{enumerate}
\end{lemma}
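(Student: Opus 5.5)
Part (i) follows directly from \lemref{lem-Jacobx}; part (ii) combines part (i) with the algebraic identity for the operator $x\partial_x^N+N\partial_x^{N-1}$ and a bookkeeping of the weights $\bar\rho$.

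\textbf{Part (i).} Since $\Qfrak+1=(1+\eta)^{\nu_1}(1+\eta+x\eta_x)^{\nu_2}$ with $\nu_1=-2\gamma$, $\nu_2=-\gamma$, we have $\partial_x^s\Qfrak=\partial_x^s[(1+\eta)^{-2\gamma}(1+\eta+x\eta_x)^{-\gamma}]$. I will reuse the decomposition from the proof of \lemref{lem-Jacobt}. Writing
\[
\partial_x\bigl[(1+\eta)^{\nu_1}(1+\eta+x\eta_x)^{\nu_2}\bigr]=\nu_1(1+\eta)^{\nu_1-1}(1+\eta+x\eta_x)^{\nu_2}\eta_x+\nu_2(1+\eta)^{\nu_1}(1+\eta+x\eta_x)^{\nu_2-1}(x\eta_{xx}+2\eta_x),
\]
I apply $\partial_x^{s-1}$ and keep only the terms in which every derivative falls on $\eta$. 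This gives
\[
\nu_1(1+\eta)^{\nu_1-1}(1+\eta+x\eta_x)^{\nu_2}\partial_x^s\eta+\nu_2(1+\eta)^{\nu_1}(1+\eta+x\eta_x)^{\nu_2-1}\bigl[x\partial_x^{s+1}\eta+(s+1)\partial_x^s\eta\bigr].
\]
Factoring out $(1+\eta)^{-2\gamma}(1+\eta+x\eta_x)^{-\gamma-1}$, the coefficient of $\partial_x^s\eta$ is $-2\gamma\frac{1+\eta+x\eta_x}{1+\eta}-\gamma(s+1)$. This equals $-\gamma(s+3)$ up to $\mathcal{O}(|\eta|+|x\eta_x|)\partial_x^s\eta$. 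The leftover error $\mathcal{O}(\epsilon)|\partial_x^s\eta|$ is bounded by the $j=s$ term of the sum, since its weight $\bar\rho^{-(\gamma-1)/2}\ge c$. (The case $s=1,2$ is handled by the last error term.)

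The commutator terms $[\partial_x^{s-1},\cdot\,](x\eta_{xx}+2\eta_x)$ and $[\partial_x^{s-1},\cdot\,]\eta_x$ are sums of $\partial_x^i(\text{Jacobian})\cdot\partial_x^{s-i}(\ldots)$ with $i\ge1$. These are bounded exactly as in the proof of \lemref{lem-Jacobt}, using \eqref{ineq-Jacobx0} together with the $L^\infty$ a priori bounds \eqref{def-apriori-assumx} on the lower-order factor. This produces the $\epsilon$-terms of \eqref{ineq-Px1}.

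\textbf{Part (ii).} First I compute $(x\partial_x^N+N\partial_x^{N-1})\Qfrak$ by applying (i) with $s=N$ and $s=N-1$ (for $N=1$, use $\Qfrak$ itself, which is $\mathcal{O}(|\eta|+|x\eta_x|)$ linearized). The leading parts combine through the identity
\[
x\bigl[x\partial_x^{N+1}\eta+(N+3)\partial_x^N\eta\bigr]+N\bigl[x\partial_x^N\eta+(N+2)\partial_x^{N-1}\eta\bigr]=G_N.
\]
This requires the Jacobian coefficient $(1+\eta)^{-2\gamma}(1+\eta+x\eta_x)^{-\gamma-1}$ to be common to both pieces. It is, because (i) was written with the same prefactor for every $s$.

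Next I multiply by $\bar\rho^{(\gamma-1)(N+1)+1}(1+\eta)^2$ and take $\partial_x$. The main term is exactly the first line of \eqref{ineq-Px2}, kept inside the derivative. The remainder is the $\partial_x$ of $\bar\rho^{(\gamma-1)(N+1)+1}(1+\eta)^2$ times the error terms from (i) multiplied by $x$ or $N$.

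I expect this last step to be the main obstacle. When $\partial_x$ falls on the error terms, it raises the derivative index $j\to j+1$, which is compensated by one power $\bar\rho^{\gamma-1}$. When it falls on $\bar\rho$, I use $|\bar\rho_x|\le C x\bar\rho^{2-\gamma}$ from \eqref{eq-alpha2}$_1$; this loses exactly $\bar\rho^{\gamma-1}$ but gains a factor $x$. Tracking the exponents then gives the weights $\bar\rho^{(\gamma-1)(j-\frac12)+1}$ with the factor $x^2$ or $x$, and $\bar\rho^{(\gamma-1)(j+\frac12)+1}$ with the factor $x$ or $1$. The terms coming from the $N\partial_x^{N-1}$ part carry one fewer $x$ but one more $\bar\rho^{\gamma-1}$.

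The boundary case $j=N+1$, namely $\partial_x$ applied to the $j=N$ error, must be checked so that it appears only in the first sum. The Jacobian term where $\partial_x$ hits $(1+\eta)^2$ is $\mathcal{O}(\epsilon)$ times lower-order terms and is absorbed into the same sums.
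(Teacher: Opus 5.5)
Your ingredients match the paper's: the Leibniz rule, part (i), the combination identity producing $G_N$, and the weight bookkeeping via $|\bar\rho_x|\le Cx\bar\rho^{2-\gamma}$. Your part (i) is exactly the expansion the paper writes out in the proof of \lemref{lem-Jacobt}.

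The genuine gap is in part (ii), at the step you flagged. Write $J=(1+\eta)^{-2\gamma}(1+\eta+x\eta_x)^{-\gamma-1}$, $w=(\gamma-1)(N+1)+1$, and $(x\partial_x^N+N\partial_x^{N-1})\Qfrak=-\gamma JG_N+E_N$. Your remainder $\partial_x\{\bar\rho^w(1+\eta)^2E_N\}$ contains $\bar\rho^w(1+\eta)^2\partial_xE_N$. Part (i) only gives a pointwise \emph{bound} on $E_N$ and says nothing about its derivative, so ``when $\partial_x$ falls on the error terms, $j\to j+1$'' is not a step that follows from (i). Doing it by hand would mean reopening the proof of (i) and redoing every product estimate one order higher. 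That includes re-splitting products whose $L^\infty$ factor already sits at the top of the range in \eqref{def-apriori-assumx}.

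The missing idea is to compute $\partial_xE_N$ exactly:
$\partial_xE_N=(x\partial_x^{N+1}+(N+1)\partial_x^N)\Qfrak+\gamma J\partial_xG_N+\gamma(\partial_xJ)G_N$.
Then apply (i) with $s=N+1$ and $s=N$ to the first term; this is why (i) is stated up to $s=N_0+1$. Since $x[x\partial_x^{N+2}\eta+(N+4)\partial_x^{N+1}\eta]+(N+1)[x\partial_x^{N+1}\eta+(N+3)\partial_x^N\eta]=\partial_xG_N$, the leading parts cancel. What remains is $xR_{N+1}+(N+1)R_N+\gamma(\partial_xJ)G_N$, where $R_s$ is the remainder in \eqref{ineq-Px1}. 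Each piece lands in an admissible slot:
\begin{itemize}
\item $x\bar\rho^wR_{N+1}$ is exactly the first sum of \eqref{ineq-Px2} (with $j$ up to $N+1$) plus part of the last term.
\item $\bar\rho^wR_N$ gives the second sum.
\item $\bar\rho^w(\partial_xJ)G_N=\mathcal{O}(\epsilon)\bar\rho^{w-\frac{\gamma-1}{2}}(x^2|\partial_x^{N+1}\eta|+x|\partial_x^N\eta|+|\partial_x^{N-1}\eta|)$ fits the $j=N+1$ and $j=N$ terms, or the last term for $N\le 2$.
\end{itemize}
This is equivalent to the paper's route, which applies Leibniz first and treats $I_1,I_2,I_3$ separately with (i) at $s=N-1,N,N+1$.

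Two smaller points. First, in (i) the coefficient discrepancy is exactly $-2\gamma\,x\eta_x/(1+\eta)$, with no $\mathcal{O}(|\eta|)$ part. The factor $x$ is what makes it absorbable. The right side of \eqref{ineq-Px1} contains $\partial_x^s\eta$ only as $x|\partial_x^s\eta|$, so $\epsilon|\partial_x^s\eta|$ is \emph{not} dominated by the $j=s$ term near $x=0$, while $\epsilon x|\partial_x^s\eta|$ is.

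Second, for $N=1$ your linearization of $\Qfrak$ leaves a remainder of exact order $\eta^2$, and it cannot be absorbed. For $\eta\equiv c$, the left side of \eqref{ineq-Px2} minus the main term equals $-\tfrac{3\gamma(3\gamma+1)}{2}c^2(\bar\rho^w)_x+\mathcal{O}(c^3)\neq0$, while every listed error term vanishes. So \eqref{ineq-Px2} at $N=1$ needs an extra term $\mathcal{O}(\epsilon)\,x\bar\rho^{\gamma}|\eta|$. The paper's proof tacitly uses (i) at $s=0$ and overlooks this too. It is harmless downstream, since the bound on $H_{N,2}$ in Step 1 already contains $Cx|\partial_x^{N-1}\eta|$.
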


\begin{proof}
	Note that	
	\begin{equation}
		\label{eq-Qfrak-1}
		\begin{aligned}
			\mathfrak{Q}_x &=-\gamma(1+\eta)^{-2\gamma}(1+\eta+x\eta_x)^{-\gamma} \left(\frac{2\eta_x}{1+\eta}+\frac{2\eta_x+x\eta_{xx}}{1+\eta+x\eta_x}\right)\\ &
			=-\gamma (1+\eta)^{-2\gamma}(1+\eta+x\eta_x)^{-\gamma-1}\left[x\eta_{xx}+4\eta_x+2(1+\eta)^{-1} x \eta_x^2\right].
		\end{aligned}   
	\end{equation}
	So for $s=1$, \eqref{ineq-Px1} is verified with \eqref{ineq-eta}.
	
	For $2\leq s\leq N_0+1$, it follows from \lemref{lem-Jacobx} that 
	\begin{align*}
    \partial_x^s \Qfrak
        = &-\gamma \partial_x^{s-1}[ (1+\eta)^{-2\gamma} (1+\eta+x\eta_x)^{-\gamma-1}x\eta_{xx}]\\
        & -4\gamma \partial_x^{s-1}[ (1+\eta)^{-2\gamma} (1+\eta+x\eta_x)^{-\gamma-1}\eta_x]\\
		 & -2\gamma \partial_x^{s-1}  \big[(1+\eta)^{-2\gamma-1} (1+\eta+x\eta_x)^{-\gamma-1} x\eta_x^2\big] \\
		= & -\gamma (1+\eta)^{-2\gamma} (1+\eta+x\eta_x)^{-\gamma-1} \big[x\partial_x^{s+1}\eta+(s+3)\partial_x^s\eta \big]\\
        &+\mathcal{O}(1) \epsilon \Big[\sum_{j=3}^{s} \bar\rho^{(\gamma-1)(j-s-\frac{1}{2})} (x \abs{ \partial_x^j \eta } + \abs{ \partial_x^{j-1} \eta } )+\bar\rho^{(\gamma-1)\frac{-s+1}{2}}(x\abs{\partial_x^2\eta}+\abs{\partial_x\eta}) \Big].
	\end{align*}
	So \eqref{ineq-Px1} is proved.
	
	By direct calculations, one can get
	\begin{align*}
		&\partial_x \Big\{ \bigl[\rhobar^{(\gamma-1)(N+1)+1}   (x \partial_x^{N}+N\partial_x^{N-1}) \Qfrak \bigr] (1 + \eta)^2 \Big\} \\
		=&  	(\rhobar^{(\gamma-1)(N+1)+1} )_x  \bigl[(x\partial_x^{N}+N\partial_x^{N-1}) \Qfrak \bigr] (1 + \eta)^2  \\
		& + \rhobar^{(\gamma-1)(N+1)+1}  \bigl[ \big( x\partial_x^{N+1}+(N+1)\partial_x^{N} \big) \Qfrak\bigr] (1+\eta)^2\\
		& +  2\bigl[\rhobar^{(\gamma-1)(N+1)+1}   (x\partial_x^{N}+N\partial_x^{N-1}) \Qfrak \bigr] (1 + \eta)\eta_x =: I_1+I_2+I_3, \\
	I_1=& -\gamma (\rhobar^{(\gamma-1)(N+1)+1} )_x   \Bigl\{(1+\eta)^{-2\gamma+2} (1+\eta+x\eta_x)^{-\gamma-1}   G_N\\
		&
		+ \mathcal{O}(1) \epsilon  \sum_{j=3}^{N} \bar\rho^{(\gamma-1)(j-N-\frac{1}{2})} ( x^2\abs{ \partial_x^j \eta } + x\abs{ \partial_x^{j - 1} \eta } ) \\
		& 
		 +\mathcal{O}(1) \epsilon  \sum_{j=3}^{N-1} \bar\rho^{(\gamma-1)(j-N+\frac{1}{2})} ( x\abs{ \partial_x^j \eta } + \abs{ \partial_x^{j - 1} \eta } ) \\
         &+\mathcal{O}(1) \epsilon \bar\rho^{(\gamma-1)\frac{-N+1}{2}}(x\abs{\partial_x^2\eta}+\abs{\partial_x\eta})\Big\}, \\
	I_2+I_3	= & -\gamma \rhobar^{(\gamma-1)(N+1)+1} \Bigl\{(1+\eta)^{-2\gamma+2} (1+\eta+x\eta_x)^{-\gamma-1} \partial_x (G_N)\\
	 &
	 +  \mathcal{O}(1) \epsilon  \sum_{j=3}^{N+1} \bar\rho^{(\gamma-1)(j-N-\frac{3}{2})} ( x^2\abs{ \partial_x^{j} \eta  }   + x\abs{ \partial_x^{j-1} \eta } )\\
	& 
	 +\mathcal{O}(1) \epsilon  \sum_{j=3}^{N} \bar\rho^{(\gamma-1)(j-N-\frac{1}{2})}  ( x\abs{ \partial_x^{j} \eta  }   + \abs{ \partial_x^{j-1} \eta } ) \\
     &+\mathcal{O}(1) \epsilon \bar\rho^{(\gamma-1)\frac{-N}{2}}(x\abs{\partial_x^2\eta}+\abs{\partial_x\eta}) \Big\}.
	\end{align*}
	This, together with $\eqref{eq-alpha1}_1$ yields,
	\begin{align*}
		I_1+I_2+I_3=  & -\gamma \partial_x\bigl\{ \rhobar^{(\gamma-1)(N+1)+1} (1+\eta)^{-2\gamma+2} (1+\eta+x\eta_x)^{-\gamma-1} G_N \big\}\\
	    &+ \mathcal{O}(1) \epsilon  \sum_{j=3}^{N+1} \bar\rho^{(\gamma-1)(j-\frac{1}{2})+1} ( x^2 \abs{ \partial_x^j \eta } + x \abs{ \partial_x^{j - 1} \eta } ) \\
	    & 
	    +\mathcal{O}(1) \epsilon  \sum_{j=3}^{N} \bar\rho^{(\gamma-1)(j+\frac{1}{2})+1} ( x \abs{ \partial_x^j \eta } +  \abs{ \partial_x^{j - 1} \eta } ) \\
        &+\mathcal{O}(1) \epsilon \bar\rho^{(\gamma-1)\frac{N+1}{2}+1}(x\abs{\partial_x^2\eta}+\abs{\partial_x\eta}),
	\end{align*}
which implies \eqref{ineq-Px2}.
\end{proof}

\begin{lemma} \label{lem-Pxt}
	Under the a priori assumption \eqref{def-apriori-assumt}, it holds that 
	\begin{enumerate}[label = \rmfamily(\roman*),ref = \rmfamily(\roman*)]
		\item For $1 \le s \le N_0+1$,
          \begin{align}
          	&\partial_x^s \Qfrak_\tau \nonumber \\
           =  &	-\gamma (1+\eta)^{-2\gamma} (1+\eta+x\eta_x)^{-\gamma-1} [x\partial_x^{s+1}\eta_\tau+(s+3)\partial_x^s\eta_\tau ]  \nonumber \\
         & + \gamma (\gamma+1) (1+\eta)^{-2\gamma} (1+\eta+x\eta_x)^{-\gamma-2} [x\partial_x^{s+1}\eta + (s+1) \partial_x^s \eta] (x\eta_{x\tau}+3\eta_\tau) \nonumber \\
         & + 2\gamma^2 (1+\eta)^{-2\gamma-1} (1+\eta+x\eta_x)^{-\gamma-1} \partial_x^s \eta \cdot (x\eta_{x\tau}+3\eta_\tau) \nonumber \\
         & + 2\gamma (\gamma+1) (1+\eta)^{-2\gamma-1} (1+\eta+x\eta_x)^{-\gamma-2} [x\partial_x^{s+1}\eta + (s+1) \partial_x^s \eta] x\eta_{x}\eta_\tau	\label{ineq-Pxt1}\\
         & + 2\gamma(2\gamma+1) (1+\eta)^{-2\gamma-2} (1+\eta+x\eta_x)^{-\gamma-1} \partial_x^s \eta \cdot x\eta_{x}\eta_\tau \nonumber \\
         &-2\gamma (1+\eta)^{-2\gamma-1} (1+\eta+x\eta_x)^{-\gamma-1}(x\partial_x^{s+1}\eta + s \partial_x^s\eta) \eta_\tau \nonumber \\
         &+ \mathcal{O} (1) \epsilon \sum_{j=3}^{s} \bar\rho^{(\gamma-1)(j-s-\frac{1}{2} )} ( x\abs{ \partial_x^{j} \eta_\tau } +  \abs{ \partial_x^{j-1} \eta_\tau }  +  x \abs{ \partial_x^{j} \eta} +  \abs{ \partial_x^{j-1} \eta}) \nonumber \\
          &+\mathcal{O} (1) \epsilon \bar\rho^{(\gamma-1)(\frac{-s+1}{2} )}(x\abs{\partial_x^{2} \eta_\tau} + \abs{ \partial_x \eta_\tau }+ x\abs{\partial_x^{2} \eta} + \abs{ \partial_x \eta } ). \nonumber
          \end{align}	
		
		\item For $1 \le N \le N_0$,
		\begin{equation}
			\label{ineq-Pxt2}
			\begin{aligned}
				&\partial_x \Big\{ \bigl[\rhobar^{(\gamma-1)(N+1)+1}   (x \partial_x^{N}+N \partial_x^{N-1}) \Qfrak_\tau \bigr] (1 + \eta)^2 \Big\} \\
			=	& \partial_x\Bigl\{ \rhobar^{(\gamma-1)(N+1)+1} \big[  -\gamma (1+\eta)^{-2\gamma+2} (1+\eta+x\eta_x)^{-\gamma-1} G_{N,\tau} +H_{N,1} \big] \Big\}\\
	    &+ \mathcal{O} (1) \epsilon \sum_{j=3}^{N+1} \bar\rho^{(\gamma-1)(j-\frac{1}{2} )+1} (  x^2\abs{ \partial_x^{j} \eta_\tau } +   x\abs{ \partial_x^{j-1} \eta_\tau }  +  x^2 \abs{ \partial_x^{j} \eta} +  x \abs{ \partial_x^{j-1} \eta}) \\
	    &+ \mathcal{O} (1) \epsilon \sum_{j=3}^{N} \bar\rho^{(\gamma-1)(j+\frac{1}{2} )+1} ( x\abs{ \partial_x^{j} \eta_\tau } +  \abs{ \partial_x^{j-1} \eta_\tau }  +  x \abs{ \partial_x^{j} \eta} +  \abs{ \partial_x^{j-1} \eta})  \\
        &+\mathcal{O}(1) \epsilon \bar\rho^{(\gamma-1)\frac{N+1}{2}+1}(x\abs{\partial_x^2\eta_\tau}+\abs{\partial_x\eta_\tau}+ x\abs{\partial_x^2\eta}+\abs{\partial_x\eta}),
			\end{aligned}
		\end{equation}	
    where $G_N$ is defined in \eqref{def-GN}, and $H_{N,1}=\mathcal{O}(1) \epsilon (\abs{x^2 \partial_x^{N+1}\eta} + \abs{ x \partial_x^N \eta} + \abs{\partial_x^{N-1}\eta } )$.
	\end{enumerate}
\end{lemma}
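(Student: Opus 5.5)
The plan is to mimic the proof of \lemref{lem-Px}, now differentiating $\Qfrak$ in $\tau$ first. We start from the explicit formula
\begin{equation*}
\Qfrak_\tau=-\gamma(1+\eta)^{-2\gamma}(1+\eta+x\eta_x)^{-\gamma-1}\bigl[x\eta_{x\tau}+3\eta_\tau+2(1+\eta)^{-1}x\eta_x\eta_\tau\bigr],
\end{equation*}
obtained exactly as \eqref{eq-Qfrak-1} with $\partial_x$ replaced by $\partial_\tau$. This expresses $\Qfrak_\tau$ as a sum of three products of the form $(1+\eta)^{\nu_1}(1+\eta+x\eta_x)^{\nu_2}\cdot\{x\eta_{x\tau},\ \eta_\tau,\ x\eta_x\eta_\tau\}$. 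Applying $\partial_x^s$ to the first two, we invoke \eqref{ineq-Jacobt1} and \eqref{ineq-Jacobt0} of \lemref{lem-Jacobt} with $(\nu_1,\nu_2)=(-2\gamma,-\gamma-1)$. This produces the leading term $-\gamma(\cdots)[x\partial_x^{s+1}\eta_\tau+(s+3)\partial_x^s\eta_\tau]$, where $s\partial_x^s\eta_\tau+3\partial_x^s\eta_\tau$ comes from combining the two formulas. It also produces the terms in which all $s$ derivatives fall on the Jacobian factor, namely $\nu_2(\cdots)[x\partial_x^{s+1}\eta+(s+1)\partial_x^s\eta](x\eta_{x\tau}+3\eta_\tau)$ and $\nu_1(\cdots)\partial_x^s\eta\,(x\eta_{x\tau}+3\eta_\tau)$. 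With $\nu_2=-\gamma-1$ and $\nu_1=-2\gamma$, multiplied by $-\gamma$, these give exactly the second and third lines of \eqref{ineq-Pxt1}.

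For the cubic term $-2\gamma(1+\eta)^{-2\gamma-1}(1+\eta+x\eta_x)^{-\gamma-1}x\eta_x\eta_\tau$, we write $x\eta_x\eta_\tau$ and expand by Leibniz. There are three top-order contributions:
\begin{itemize}
\item all derivatives on the Jacobian, handled by the commutator expansion inside the proof of \lemref{lem-Jacobt} with $(\nu_1,\nu_2)=(-2\gamma-1,-\gamma-1)$, which gives the fourth and fifth lines;
\item all derivatives on $x\eta_x$, giving $(x\partial_x^{s+1}\eta+s\partial_x^s\eta)\eta_\tau$, the sixth line;
\item all derivatives on $\eta_\tau$, which is $\epsilon$ times $\partial_x^s\eta_\tau$ and can be absorbed, since $|x\eta_x|<\epsilon$.
\end{itemize}
Every remaining mixed product has at least two factors of intermediate order. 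These are bounded exactly as in \eqref{ineq-Jacobx01}--\eqref{ineq-Jacobx03}. We split the index at $k_0=\lceil (s+1)/2\rceil$, so that the lower-order factor carries at most $N_0-2$ derivatives and is controlled in $L^\infty$ by \eqref{def-apriori-assumx}--\eqref{def-apriori-assumt} with weight $\bar\rho^{(\gamma-1)(k-3/2)}$. The resulting $\epsilon$-remainders have the stated weights $\bar\rho^{(\gamma-1)(j-s-1/2)}$ and $\bar\rho^{(\gamma-1)(1-s)/2}$. The $\mathcal O(\epsilon)$ error terms from \lemref{lem-Jacobt} already have this form.

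For part (ii), we follow the proof of \eqref{ineq-Px2}. We expand $\partial_x\{[\bar\rho^{(\gamma-1)(N+1)+1}(x\partial_x^N+N\partial_x^{N-1})\Qfrak_\tau](1+\eta)^2\}$ into $I_1+I_2+I_3$, with the derivative landing respectively on $\bar\rho$, on the operator (turning it into $x\partial_x^{N+1}+(N+1)\partial_x^N$), and on $(1+\eta)^2$. We insert (i) with $s=N-1,N,N+1$. The principal parts combine, via the identity $(x\partial_x^N+N\partial_x^{N-1})[x\partial_x\eta_\tau+3\eta_\tau]=G_{N,\tau}$ (and its derivative analogue), into $-\gamma\partial_x\{\bar\rho^{(\gamma-1)(N+1)+1}(1+\eta)^{2-2\gamma}(1+\eta+x\eta_x)^{-\gamma-1}G_{N,\tau}\}$.

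The quadratic terms of (i) — lines two to six — have coefficients $x\eta_{x\tau}+3\eta_\tau$, $x\eta_x\eta_\tau$, or $\eta_\tau$, all of size $\epsilon$ by \eqref{def-apriori-assumt}, times combinations of $x^2\partial_x^{N+1}\eta$, $x\partial_x^N\eta$, $\partial_x^{N-1}\eta$. We keep them inside the $\partial_x$ as $\bar\rho^{(\gamma-1)(N+1)+1}H_{N,1}$ rather than expanding. Whatever is generated when $\partial_x$ falls on their $\epsilon$-coefficients is put into the remainder, using $\eta_{xx\tau}$ bounds. The derivative of the weight is converted by $\eqref{eq-alpha1}_1$, i.e.\ $(\bar\rho^{\gamma-1})_x=-\frac{(\gamma-1)c_1}{A\gamma}x$, which supplies the extra factor $x$ and accounts for the shift of the exponents by $(\gamma-1)(N+1)+1$ in the remainder terms.

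The main obstacle is bookkeeping rather than a new idea. Specifically, one must check that every mixed Leibniz product in the cubic term and in $I_1$–$I_3$ really fits one of the three $\epsilon$-weighted remainder sums, with the correct power of $\bar\rho$ and of $x$. This is most delicate near $s=N_0+1$, where one must confirm that the factor placed in $L^\infty$ never exceeds the $N_0-2$ derivatives allowed by the a priori assumptions; this is where $N_0\ge6$ is used.
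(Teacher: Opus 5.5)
Your proposal is correct and takes essentially the same route as the paper. For (i), you use the explicit formula \eqref{eq-Qfrak-tau} for $\Qfrak_\tau$ together with \lemref{lem-Jacobt}, plus a Leibniz expansion of the cubic piece. For (ii), you use the $I_1+I_2+I_3$ decomposition, $\eqref{eq-alpha1}_1$, the identity $(x\partial_x^N+N\partial_x^{N-1})(x\eta_{x\tau}+3\eta_\tau)=G_{N,\tau}$, and you keep the quadratic terms inside $\partial_x$ as $H_{N,1}$.

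One small correction concerns the cubic-term contribution where all derivatives fall on $\eta_\tau$. Bound it via $|\eta_x|<\epsilon$ as $C\epsilon x|\partial_x^s\eta_\tau|$. A bound $\epsilon|\partial_x^s\eta_\tau|$ without the factor $x$, which is what $|x\eta_x|<\epsilon$ gives, does not fit the stated remainder for $s\ge 2$.
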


\begin{proof}
	Note that	
	\begin{equation}
		\label{eq-Qfrak-tau}
		\begin{aligned}
			\mathfrak{Q}_\tau &=-\gamma (1 + \eta)^{ - 2 \gamma} ( 1 + \eta + x \eta_x )^{- \gamma} \left( \frac{2 \eta_\tau}{1 + \eta} + \frac{ \eta_\tau + x\eta_{x\tau} }{1 + \eta + x \eta_x}\right)\\ &
			= -\gamma ( 1 + \eta )^{ - 2 \gamma} ( 1 + \eta + x \eta_x )^{ - \gamma - 1 } \left[ x \eta_{x \tau } + 3 \eta_\tau + 2 (1 + \eta )^{-1} x \eta_x \eta_\tau \right].
		\end{aligned}   
	\end{equation}
	So for $1 \leq s \leq N_0+1$, it follows from  \lemref{lem-Jacobt} that 
	\begin{align*}
		\partial_x^s \Qfrak_\tau   
		=& -\gamma \partial_x^{s} [(1+\eta)^{-2\gamma} (1+\eta+x\eta_x)^{-\gamma-1}(x\eta_{x\tau}+3\eta_\tau)]\\
		 &-2\gamma \partial_x^{s}  \big[(1+\eta)^{-2\gamma-1} (1+\eta+x\eta_x)^{-\gamma-1} x\eta_x \eta_\tau \big] \\
		 =  &	-\gamma (1+\eta)^{-2\gamma} (1+\eta+x\eta_x)^{-\gamma-1} [x\partial_x^{s+1}\eta_\tau+(s+3)\partial_x^s\eta_\tau ]   \\
         & + \gamma (\gamma+1) (1+\eta)^{-2\gamma} (1+\eta+x\eta_x)^{-\gamma-2} [x\partial_x^{s+1}\eta + (s+1) \partial_x^s \eta] (x\eta_{x\tau}+3\eta_\tau) \\
         & + 2\gamma^2 (1+\eta)^{-2\gamma-1} (1+\eta+x\eta_x)^{-\gamma-1} \partial_x^s \eta \cdot (x\eta_{x\tau}+3\eta_\tau) \\
         & + 2\gamma (\gamma+1) (1+\eta)^{-2\gamma-1} (1+\eta+x\eta_x)^{-\gamma-2} [x\partial_x^{s+1}\eta + (s+1) \partial_x^s \eta] x\eta_{x}\eta_\tau \\
         & + 2\gamma(2\gamma+1) (1+\eta)^{-2\gamma-2} (1+\eta+x\eta_x)^{-\gamma-1} \partial_x^s \eta \cdot x\eta_{x}\eta_\tau \\
         &-2\gamma (1+\eta)^{-2\gamma-1} (1+\eta+x\eta_x)^{-\gamma-1}(x\partial_x^{s+1}\eta + s \partial_x^s\eta) \eta_\tau\\
         &+ \mathcal{O} (1) \epsilon  \sum_{j=3}^{s} \bar\rho^{(\gamma-1)(j-s-\frac{1}{2} )} (  x\abs{ \partial_x^{j} \eta_\tau } +   \abs{ \partial_x^{j-1} \eta_\tau }  +  x \abs{ \partial_x^{j} \eta} +  \abs{ \partial_x^{j-1} \eta})\\
          &+\mathcal{O} (1) \epsilon \bar\rho^{(\gamma-1)(\frac{-s+1}{2} )}(x\abs{\partial_x^{2} \eta_\tau} + \abs{ \partial_x \eta_\tau }+ x\abs{\partial_x^{2} \eta} + \abs{ \partial_x \eta } ) ,
	\end{align*}
	So \eqref{ineq-Pxt1} is proved.
	
	By direct calculations, one obtains
	\begin{align*}
		&\partial_x \Big\{ \bigl[\rhobar^{(\gamma-1)(N+1)+1}   (x \partial_x^{N}+N\partial_x^{N-1}) \Qfrak_\tau \bigr] (1 + \eta)^2 \Big\} \\
		=& (\rhobar^{(\gamma-1)(N+1)+1} )_x   \bigl[(x\partial_x^{N}+N\partial_x^{N-1}) \Qfrak_\tau \bigr] (1 + \eta)^2  \\
		& + \rhobar^{(\gamma-1)(N+1)+1}  \bigl[ \big( x\partial_x^{N+1}+(N+1)\partial_x^{N} \big) \Qfrak_\tau \bigr] (1+\eta)^2\\
		& +  2\bigl[\rhobar^{(\gamma-1)(N+1)+1}   (x\partial_x^{N}+N\partial_x^{N-1}) \Qfrak_\tau \bigr] (1 + \eta)\eta_x =: I_1+I_2+I_3, \\ 
        \noalign{\vskip 8pt}  
	I_1=&  (\rhobar^{(\gamma-1)(N+1)+1} )_x   \bigl\{-\gamma (1+\eta)^{-2\gamma+2} (1+\eta+x\eta_x)^{-\gamma-1}   G_{N,\tau}\\
         & + \gamma (\gamma+1) (1+\eta)^{-2\gamma+2} (1+\eta+x\eta_x)^{-\gamma-2} \\
         &\quad \cdot [x^2\partial_x^{N+1}\eta + (2N+1) x \partial_x^N \eta + N^2 \partial_x^{N-1}\eta] (x\eta_{x\tau}+3\eta_\tau) \\
         & + 2\gamma^2 (1+\eta)^{-2\gamma+1} (1+\eta+x\eta_x)^{-\gamma-1} (x\partial_x^N \eta + N \partial_x^{N-1}\eta) (x\eta_{x\tau}+3\eta_\tau) \\
         & + 2\gamma (\gamma+1) (1+\eta)^{-2\gamma+1} (1+\eta+x\eta_x)^{-\gamma-2} \\
         &\quad \cdot [x^2\partial_x^{N+1}\eta + (2N+1) x \partial_x^N \eta + N^2 \partial_x^{N-1}\eta]  x\eta_{x}\eta_\tau \\
         & + 2\gamma(2\gamma+1) (1+\eta)^{-2\gamma} (1+\eta+x\eta_x)^{-\gamma-1} (x\partial_x^N \eta + N \partial_x^{N-1}\eta) x\eta_{x}\eta_\tau \\
         &-2\gamma (1+\eta)^{-2\gamma+1} (1+\eta+x\eta_x)^{-\gamma-1} \\
         & \quad \cdot [x^2\partial_x^{N+1}\eta + 2N x \partial_x^N \eta + N(N-1) \partial_x^{N-1}\eta] \eta_\tau\big\}\\
         &
		+ \mathcal{O}(1) \epsilon  \sum_{j=3}^{N} \bar\rho^{(\gamma-1)(j-N-\frac{1}{2})} ( x^2\abs{ \partial_x^j \eta_\tau } + x\abs{ \partial_x^{j - 1} \eta_\tau }+ x^2\abs{ \partial_x^j \eta } + x\abs{ \partial_x^{j - 1} \eta } ) \\
		& 
		 +\mathcal{O}(1) \epsilon  \sum_{j=3}^{N-1} \bar\rho^{(\gamma-1)(j-N+\frac{1}{2})} ( x\abs{ \partial_x^j \eta_\tau } + \abs{ \partial_x^{j - 1} \eta_\tau } + x\abs{ \partial_x^j \eta } + \abs{ \partial_x^{j - 1} \eta } ) \\
         &+\mathcal{O}(1) \epsilon \bar\rho^{(\gamma-1)\frac{-N+1}{2}}(x\abs{\partial_x^2\eta_\tau}+\abs{\partial_x\eta_\tau}+ x\abs{\partial_x^2\eta}+\abs{\partial_x\eta})\big\},\\
     \noalign{\vskip 8pt}  
	I_2+I_3	= & \rhobar^{(\gamma-1)(N+1)+1} \bigl\{  -\gamma(1+\eta)^{-2\gamma+2} (1+\eta+x\eta_x)^{-\gamma-1} \partial_x ( G_{N,\tau} )\\
	       & + \gamma (\gamma+1) (1+\eta)^{-2\gamma+2} (1+\eta+x\eta_x)^{-\gamma-2} \\
           & \quad \cdot [x^2\partial_x^{N+1}\eta + (2N+1) x \partial_x^N \eta + N^2 \partial_x^{N-1}\eta]_x (x\eta_{x\tau}+3\eta_\tau) \\
         & + 2\gamma^2 (1+\eta)^{-2\gamma+1} (1+\eta+x\eta_x)^{-\gamma-1} (x\partial_x^N \eta + N \partial_x^{N-1}\eta)_x (x\eta_{x\tau}+3\eta_\tau) \\
         & + 2\gamma (\gamma+1) (1+\eta)^{-2\gamma+1} (1+\eta+x\eta_x)^{-\gamma-2}\\
         & \quad \cdot [x^2\partial_x^{N+1}\eta + (2N+1) x \partial_x^N \eta + N^2 \partial_x^{N-1}\eta]_x x\eta_{x}\eta_\tau \\
         & + 2\gamma(2\gamma+1) (1+\eta)^{-2\gamma} (1+\eta+x\eta_x)^{-\gamma-1} (x\partial_x^N \eta + N \partial_x^{N-1}\eta)_x x\eta_{x}\eta_\tau \\
         &-2\gamma (1+\eta)^{-2\gamma+1} (1+\eta+x\eta_x)^{-\gamma-1} \\
         &\quad \cdot [x^2\partial_x^{N+1}\eta + 2N x \partial_x^N \eta + N(N-1) \partial_x^{N-1}\eta]_x \eta_\tau \big\}\\
	    &+ \mathcal{O} (1)\epsilon \sum_{j=3}^{N+1} \bar\rho^{(\gamma-1)(j-N-\frac{3}{2} )} (  x^2\abs{ \partial_x^{j} \eta_\tau } +   x\abs{ \partial_x^{j-1} \eta_\tau }+ x^2 \abs{ \partial_x^{j} \eta} +  x \abs{ \partial_x^{j-1} \eta}) \\
	    &+ \mathcal{O} (1) \epsilon \sum_{j=3}^{N} \bar\rho^{(\gamma-1)(j-N-\frac{1}{2} )} ( x\abs{ \partial_x^{j} \eta_\tau } +   \abs{ \partial_x^{j-1} \eta_\tau } +  x \abs{ \partial_x^{j} \eta} +  \abs{ \partial_x^{j-1} \eta}) \\
        &+ \mathcal{O} (1) \epsilon  \bar\rho^{(\gamma-1)\frac{-N}{2} } ( x\abs{ \partial_x^2 \eta_\tau } +   \abs{ \partial_x \eta_\tau } +  x \abs{ \partial_x^2 \eta} +  \abs{ \partial_x \eta}) \big\}.
	\end{align*}
    
	This, together with $\eqref{eq-alpha1}_1$ yields,
	\begin{align*}
&I_1+I_2+I_3\\
        =  &  \partial_x\Bigl\{  \rhobar^{(\gamma-1)(N+1)+1} \big[ -\gamma (1+\eta)^{-2\gamma+2} (1+\eta+x\eta_x)^{-\gamma-1} G_{N,\tau} \\
        & + \gamma (\gamma+1) (1+\eta)^{-2\gamma+2} (1+\eta+x\eta_x)^{-\gamma-2} \\
        & \quad \cdot [x^2\partial_x^{N+1}\eta + (2N+1) x \partial_x^N \eta + N^2 \partial_x^{N-1}\eta] (x\eta_{x\tau}+3\eta_\tau) \\
         & + 2\gamma^2 (1+\eta)^{-2\gamma+1} (1+\eta+x\eta_x)^{-\gamma-1} (x\partial_x^N \eta + N \partial_x^{N-1}\eta) (x\eta_{x\tau}+3\eta_\tau) \\
         & + 2\gamma (\gamma+1) (1+\eta)^{-2\gamma+1} (1+\eta+x\eta_x)^{-\gamma-2}\\
        & \quad \cdot  [x^2\partial_x^{N+1}\eta + (2N+1) x \partial_x^N \eta + N^2 \partial_x^{N-1}\eta]  x\eta_{x}\eta_\tau \\
         & + 2\gamma(2\gamma+1) (1+\eta)^{-2\gamma} (1+\eta+x\eta_x)^{-\gamma-1} (x\partial_x^N \eta + N \partial_x^{N-1}\eta) x\eta_{x}\eta_\tau \\
         &-2\gamma (1+\eta)^{-2\gamma+1} (1+\eta+x\eta_x)^{-\gamma-1}\\
        & \quad \cdot [x^2\partial_x^{N+1}\eta + 2N x \partial_x^N \eta + N(N-1) \partial_x^{N-1}\eta] \eta_\tau \big] \Big\}\\
	    &+ \mathcal{O} (1) \epsilon \sum_{j=3}^{N+1} \bar\rho^{(\gamma-1)(j-\frac{1}{2} )+1} (  x^2\abs{ \partial_x^{j} \eta_\tau } +   x\abs{ \partial_x^{j-1} \eta_\tau }  +  x^2 \abs{ \partial_x^{j} \eta} +  x \abs{ \partial_x^{j-1} \eta}) \\
	    &+ \mathcal{O} (1) \epsilon \sum_{j=3}^{N} \bar\rho^{(\gamma-1)(j+\frac{1}{2} )+1} ( x\abs{ \partial_x^{j} \eta_\tau } +  \abs{ \partial_x^{j-1} \eta_\tau }  +  x \abs{ \partial_x^{j} \eta} +  \abs{ \partial_x^{j-1} \eta})  \\
        &+\mathcal{O}(1) \epsilon \bar\rho^{(\gamma-1)\frac{N+1}{2}+1}(x\abs{\partial_x^2\eta_\tau}+\abs{\partial_x\eta_\tau}+ x\abs{\partial_x^2\eta}+\abs{\partial_x\eta}),
	\end{align*}
    which implies \eqref{ineq-Pxt2}.
\end{proof}

Now we derive the positive definiteness of $G_N^2$ defined in \eqref{def-GN}, which is precisely the purpose and the achievement to construct the non-trivial differential operator.

\begin{lemma}\label{lem-GN}
For $1 \leq N \le N_0$, $G_N$ and $G_{N,\tau}$ admit the following lower bounds, respectively: 
    \begin{equation}\label{ineq-GN1}
    \begin{aligned}
    &\int \bar\rho^{(\gamma-1)(N+1) + 1} G_N^2 \, dx \\
        \ge & C \int \bar\rho^{(\gamma-1)(N+1) +1} \Big\{x^4 (\partial_x^{N+1} \eta)^2 +  x^2 (\partial_x^{N}\eta)^2 \Big\}\,dx\\ &
        - C\int \bar\rho^{(\gamma-1)N +1} \Big\{x^4 (\partial_x^{N} \eta)^2 + x^2 (\partial_x^{N-1}\eta)^2 \Big\} \,dx     
    \end{aligned}
    \end{equation}
and
    \begin{equation}\label{ineq-GN2}
    \begin{aligned}
    &\int \bar\rho^{(\gamma-1)(N+1) + 1} G_{N,\tau}^2 \, dx \\
        \ge & C \int \bar\rho^{(\gamma-1)(N+1) +1} \Big\{x^4 (\partial_x^{N+1} \eta_\tau)^2 +  x^2 (\partial_x^{N}\eta_\tau)^2 \Big\}\,dx\\ &
        - C \int \bar\rho^{(\gamma-1)N +1} \Big\{x^4 (\partial_x^{N} \eta_\tau)^2 + x^2 (\partial_x^{N-1}\eta_\tau)^2 \Big\} \,dx.
    \end{aligned}
    \end{equation}
\end{lemma}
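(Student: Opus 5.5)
Set $w:=\partial_x^{N-1}\eta$ and $k:=(\gamma-1)(N+1)+1$. Then
\[
G_N = x^2 w'' + (2N+3)x w' + N(N+2) w .
\]
This operator factorizes: one checks $x^2w''+(2N+3)xw'+N(N+2)w = x^{-N-1}\,\bigl(x\,(x^{N+2}w)'\bigr)'$, whose indicial roots are $-N$ and $-(N+2)$. The approach is to expand $\int\bar\rho^{k}G_N^2\,dx$ as $\int\bar\rho^k\bigl[(x^2w''+2\sigma xw')+((2N+3-2\sigma)xw'+N(N+2)w)\bigr]^2dx$, or more directly to expand $G_N^2$ termwise. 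The cross terms will be handled by integration by parts in $x$. The parity of $\eta$ from \rmkref{rmk-trace} (equivalently, approximation by smooth even data) guarantees that the boundary terms at $x=0$ vanish. At $x=\bar R$ they vanish because $\bar\rho=0$ there.

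The key step is the identity obtained by integrating by parts, with $a=2N+3$ and $b=N(N+2)$:
\[
\int\bar\rho^k G_N^2 = \int\bar\rho^k\bigl[x^4(w'')^2 + (a^2-2a-6+2b\cdot 0)\,x^2(w')^2 + b^2w^2\bigr]+\dots
\]
Specifically, $2a\int\bar\rho^k x^3w''w' = -3a\int\bar\rho^k x^2(w')^2 - a\int(\bar\rho^k)'x^3(w')^2$, and similarly $2b\int\bar\rho^k x^2w''w$ and $2ab\int\bar\rho^k xw'w$ are reduced to $-2b\int\bar\rho^k x^2(w')^2$ and to multiples of $\int\bar\rho^k w^2$, plus terms containing $(\bar\rho^k)'$.

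For the coefficients where no derivative falls on the weight (the flat-weight model $\bar\rho\equiv1$), the quadratic form reduces to the one obtained from the Mellin/Fourier symbol. Under $x=e^s$, $G_N$ becomes $(D+N)(D+N+2)$ with $D=d/ds$, and $\int G_N^2\,x^{-1}dx$ corresponds to $\int|(i\xi+N)(i\xi+N+2)|^2\,d\xi$. This symbol has no real zeros, since $N\ge1$. Consequently the flat-weight principal form is coercive, and it controls $x^4(w'')^2$ and $x^2(w')^2$. The measure $dx$ rather than $x^{-1}dx$ only shifts $D$ by a constant, so there are no real zeros for the shifted symbol $(i\xi+N+\tfrac12)(i\xi+N+\tfrac52)$ either. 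I would choose the splitting constant $\sigma$ in the first decomposition accordingly, to exhibit the positive combination explicitly.

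All terms in which a derivative falls on $\bar\rho^k$ carry the factor $(\bar\rho^k)'=k\bar\rho^{k-1}\bar\rho'$. By $\eqref{eq-alpha2}_1$ we have $\bar\rho'=-\frac{c_1}{A\gamma}x\bar\rho^{2-\gamma}$, so
\[
(\bar\rho^k)'=\mathcal O(1)\,x\,\bar\rho^{k-(\gamma-1)} = \mathcal O(1)\,x\,\bar\rho^{(\gamma-1)N+1}.
\]
This is exactly one power of $\bar\rho^{\gamma-1}$ lower, with an extra factor $x$. Such terms, for example $\int x\bar\rho^{(\gamma-1)N+1}x^3(w')^2$, are bounded by the lower-weight norms $\int\bar\rho^{(\gamma-1)N+1}\{x^4(\partial_x^N\eta)^2+x^2(\partial_x^{N-1}\eta)^2\}$, in some cases after Cauchy--Schwarz/Young with the principal terms. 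The terms of the form $\int\bar\rho^k w^2$ are either nonnegative, and can be dropped, or bounded by $\int\bar\rho^{(\gamma-1)N+1}x^2w^2$ away from the origin. Near the origin $\bar\rho\sim1$, and there I would use \lemref{lem-Hardy} with positive flat-weight coercivity to control them by $x^2(w')^2$.

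The main obstacle is the sign bookkeeping. One must verify that, after the integration by parts, the coefficients of $x^4(w'')^2$ and $x^2(w')^2$ in the principal (weight-derivative-free) part are positive, or can be made positive after a controlled use of the Hardy inequality. I would localize with a cutoff: near $x=0$ the weight is essentially constant and the symbol argument applies. Near $x=\bar R$ the factor $x$ is comparable to $1$, so the lower-order pieces $x w'$ and $w$ are trivially absorbed into the subtracted term by the crude bound $\bar\rho^{k}\le C\bar\rho^{(\gamma-1)N+1}$, and only $x^4(w'')^2$ needs to come from $G_N^2$ itself, which follows from $|x^2w''|\le|G_N|+C(|xw'|+|w|)$. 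The estimate \eqref{ineq-GN2} follows identically with $\eta$ replaced by $\eta_\tau$, since $G_{N,\tau}$ is the same operator applied to $\eta_\tau$.
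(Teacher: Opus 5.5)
Your overall structure (expand $G_N^2$, integrate the cross terms by parts, and bound every term carrying $(\bar\rho^k)'=-\frac{kc_1}{A\gamma}\,x\,\bar\rho^{(\gamma-1)N+1}$ by the subtracted norm) is exactly the paper's, and it is correct. You diverge only at the step you call the main obstacle.

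The paper simply finishes the arithmetic. With $a=2N+3$ and $b=N(N+2)$, the coefficient of $x^2(w')^2$ is $a^2-3a-2b=2N(N+1)$, and the coefficient of $w^2$ is $b^2-(a-2)b=(N-1)N(N+1)(N+2)$. That is,
\begin{align*}
\int\bar\rho^kG_N^2\,dx={}&\int\bar\rho^k\bigl[x^4(w'')^2+2N(N+1)x^2(w')^2+(N-1)N(N+1)(N+2)w^2\bigr]dx\\
&-\int(\bar\rho^k)'\bigl[(2N+3)x^3(w')^2+2N(N+2)x^2ww'+N(N+2)(2N+1)xw^2\bigr]dx .
\end{align*}
All principal coefficients are nonnegative for $N\ge1$. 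Since $(\bar\rho^k)'\le0$, the first and third weight-derivative terms even have a favorable sign, so only the cross term needs Cauchy--Schwarz. No Hardy inequality, cutoff, or Mellin analysis is needed.

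Three slips in your writeup, none of which changes your conclusions:
\begin{enumerate}
\item Your displayed coefficient $a^2-2a-6+2b\cdot0$ is wrong; it should be $a^2-3a-2b$.
\item Your factorization is wrong: it produces $(N+2)^2w+(2N+5)xw'$. The correct one is $G_N=x^{1-N}\bigl(x^{-1}(x^{N+2}w)'\bigr)'$.
\item For the measure $dx$ the Mellin symbol is $(i\xi+N-\tfrac12)(i\xi+N+\tfrac32)$, not a $+\tfrac12$ shift.
\end{enumerate}

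Your alternative route does close, but only in its localized form. Coercivity of the flat-weight form does not by itself determine the signs of the coefficients, and those signs are what control the weighted diagonal form $\int\bar\rho^k[\cdots]$. So proceed as follows:
\begin{enumerate}
\item Apply the symbol bound to $\chi w$, with $\chi\equiv1$ on $[0,\bar R/4]$ and $\mathrm{supp}\,\chi\subset[0,\bar R/2]$.
\item Use $\int\chi^2G_N^2\le C\int\bar\rho^kG_N^2$, which holds because $\bar\rho$ is bounded below on $[0,\bar R/2]$.
\item Absorb the commutator $x^2(2\chi'w'+\chi''w)+(2N+3)x\chi'w$ into the subtracted norm; it is supported where $x\sim1$ and $\bar\rho\sim1$.
\item Handle the region $x\ge\bar R/4$ exactly as you describe.
\end{enumerate}

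Your approach gives a structural reason for coercivity: the indicial roots $-N,-N-2$ stay away from the $L^2(dx)$-critical exponent $-\tfrac12$, and this would survive perturbations of the coefficients. The paper's approach gives an exact global identity in a few lines, with no localization.
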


\begin{proof}
It suffices to prove \eqref{ineq-GN1}, and the other one is treated similarly. The proof is finished by direct calculations with integration by parts. Noting that $\eqref{eq-alpha1}_1$, we have
\begin{align*}
& \int \bar\rho^{(\gamma-1)(N+1) + 1} G_N^2 \, dx \\
= & \int \bar\rho^{(\gamma-1)(N+1) + 1} [x^4 (\partial_x^{N+1} \eta)^2 + (2N+3)^2 x^2 (\partial_x^N \eta)^2 + N^2(N+2)^2 (\partial_x^{N-1} \eta)^2  ] \, dx\\
    & + (2N+3) \int \bar\rho^{(\gamma-1)(N+1) + 1} x^3 \, d (\partial_x^N \eta)^2 \\
    & + 2N(N+2) \int \bar\rho^{(\gamma-1)(N+1) + 1} x^2 \partial_x^{N-1} \eta \, d\partial_x^{N} \eta \\ 
    &+ N(N+2)(2N+3)  \int \bar\rho^{(\gamma-1)(N+1) + 1} x \, d (\partial_x^{N-1} \eta)^2 \\
= & \int \bar\rho^{(\gamma-1)(N+1) + 1} [x^4 (\partial_x^{N+1} \eta)^2 + 2N(N+1) x^2 (\partial_x^N \eta)^2 + N^2(N+2)^2 (\partial_x^{N-1} \eta)^2  ] \, dx\\
    & +N(N+2)(2N+1) \int \bar\rho^{(\gamma-1)(N+1) + 1} x  \,d (\partial_x^{N-1} \eta)^2 \\
    & -(2N+3) \int (\bar\rho^{(\gamma-1)(N+1) + 1})_x x^3 (\partial_x^N \eta)^2 \, dx \\
    &- 2N(N+2) \int (\bar\rho^{(\gamma-1)(N+1) + 1})_x x^2 \partial_x^{N-1} \eta \, \partial_x^{N} \eta \, dx \\ 
= & \int \bar\rho^{(\gamma-1)(N+1) + 1} [x^4 (\partial_x^{N+1} \eta)^2 + 2N(N+1) x^2 (\partial_x^N \eta)^2 \\
& \qquad+ (N-1)N(N+1)(N+2) (\partial_x^{N-1} \eta)^2  ] \, dx\\    
    & -(2N+3) \int (\bar\rho^{(\gamma-1)(N+1) + 1})_x x^3 (\partial_x^N \eta)^2 \, dx \\
    & - 2N(N+2) \int (\bar\rho^{(\gamma-1)(N+1) + 1})_x x^2 \partial_x^{N-1} \eta \, \partial_x^{N} \eta \, dx \\ 
    & -N(N+2)(2N+1) \int (\bar\rho^{(\gamma-1)(N+1) + 1})_x x (\partial_x^{N-1} \eta)^2 \,dx  \\
\geq & C\int \bar\rho^{(\gamma-1)(N+1) +1} \Big\{x^4 (\partial_x^{N+1} \eta)^2 +  x^2 (\partial_x^{N}\eta)^2 \Big\}\,dx\\ &
        - C\int \bar\rho^{(\gamma-1)N +1} \Big\{x^4 (\partial_x^{N} \eta)^2 + x^2 (\partial_x^{N-1}\eta)^2 \Big\} \,dx  .
\end{align*}
Consequently, \eqref{ineq-GN1} is proved.
\end{proof}

\subsection{Proof of higher-order energy estimates}
With the above estimates of the lower-order terms at hand, we can now proceed to prove \propref{prop-high}. \\
\textbf{ Proof of Proposition \ref*{prop-high}. }
Rewrite equation \eqref{eq-Lag-high} via the notation $\mathfrak{Q}$ (see \eqref{def-Qfrak}):
\begin{equation} \label{eq-Lag-0}
\begin{aligned}
&  \alpha x \eta_{\tau\tau} +  \alpha_\tau x\eta_\tau-c_1\alpha^{3-3\gamma}\tilde{\alpha}(1+\eta) x\eta+  A\alpha^{3-3\gamma}\tilde{\alpha}\frac{(1+\eta)^2}{\bar\rho}(\bar\rho^\gamma\mathfrak{Q})_x \\
 = & -\frac{2\bar\mu+\bar\lambda}{\gamma} \alpha^{3-3\gamma}\frac{(1+\eta)^2}{\bar\rho}(\bar\rho^\gamma\mathfrak{Q_\tau})_x  
 - 4\bar\mu \alpha^{3-3\gamma} \dfrac{(1+\eta)\eta_\tau}{\bar\rho}[\bar\rho^\gamma ( \mathfrak{Q} + 1 ) ]_x . 
\end{aligned}
\end{equation}
For $1 \le N \le N_0,$ taking the operator $x\partial_x^{N} + (N+1) \partial_x^{N-1}$ to \eqref{eq-Lag-0}, one gets
\begin{equation} \label{eq-Lag-1}
\begin{aligned}
&\alpha [x\partial_x^{N} + (N+1) \partial_x^{N-1}] (x\eta_{\tau\tau})+\alpha_\tau [x\partial_x^{N} + (N+1) \partial_x^{N-1}] (x\eta_{\tau})\\ &
-c_1\alpha^{3-3\gamma}\tilde{\alpha} [x\partial_x^{N} + (N+1) \partial_x^{N-1}] \big(x\eta(1+\eta)\big)\\ &
+ A\alpha^{3-3\gamma}\tilde{\alpha} [x\partial_x^{N} + (N+1) \partial_x^{N-1}] \left\{ (\bar\rho^\gamma\mathfrak{Q})_x \frac{(1+\eta)^2}{\bar\rho}\right\}\\
 =& -\frac{2\bar\mu+\bar\lambda}{\gamma}\alpha^{3-3\gamma} [x\partial_x^{N} + (N+1) \partial_x^{N-1}]  \left\{ ( \bar\rho^\gamma \mathfrak{Q}_\tau )_x \frac{(1+\eta)^2}{\bar\rho} \right\}\\&
 - 4\bar\mu \alpha^{3-3\gamma} [x\partial_x^{N} + (N+1) \partial_x^{N-1}] \left\{ [\bar\rho^\gamma ( \mathfrak{Q} + 1 ) ]_x  \frac{(1+\eta)\eta_\tau}{\bar\rho }\right\}. 
\end{aligned}
\end{equation}

Multiply \eqref{eq-Lag-1} by $\alpha^{r_1}\bar\rho^{(\gamma-1)N+1} [x\partial_x^{N} + (N+1) \partial_x^{N-1}](x\eta_\tau)$ and integrate to obtain
\begin{align}
& \frac{d}{d \tau} \frac{1}{2}  \alpha^{1 + r_1} \int \bar\rho^{ (\gamma - 1)N + 1} G_{N-1,\tau} ^2 \, dx\notag\\ 
&+ \frac{1 - r_1}{2} \alpha^{r_1} \alpha_\tau \int \bar\rho^{ (\gamma - 1)N + 1 } G_{N-1,\tau} ^2 \,dx\notag\\
= & c_1 \alpha^{3 - 3\gamma + r_1} \tilde\alpha \int \bar\rho^{ (\gamma - 1) N + 1 } G_{N-1,\tau} \cdot [x\partial_x^{N} + (N+1) \partial_x^{N-1}] ( x \eta (1 + \eta) )\, dx\notag\\
&-  A \alpha^{3 - 3 \gamma + r_1} \tilde\alpha \int \bar\rho^{(\gamma - 1 ) N + 1} G_{N-1,\tau}  \cdot [x\partial_x^{N} + (N+1) \partial_x^{N-1}] \left\{ \left( \bar\rho^\gamma \Qfrak \right)_x \frac{(1 + \eta)^2}{\bar\rho} \right\} \, dx \notag\\
&-  \frac{2\bar\mu + \bar\lambda }{\gamma} \alpha^{3 - 3 \gamma + r_1} \int \bar\rho^{(\gamma - 1) N + 1} G_{N-1,\tau} \notag\\
&\qquad\qquad\qquad\qquad\qquad \cdot [x\partial_x^{N} + (N+1) \partial_x^{N-1}] \left\{  (\rhobar^\gamma \Qfrak_\tau)_x  \frac{(1 + \eta)^2}{\bar\rho} \right\} \, dx \notag\\ &
- 4 \bar\mu \alpha^{3 - 3 \gamma + r_1} \int \bar\rho^{(\gamma - 1) N + 1}G_{N-1,\tau} \notag\\
& \qquad\qquad\qquad\qquad\cdot [x\partial_x^{N} + (N+1) \partial_x^{N-1}] \left\{ [ \rhobar^\gamma (\Qfrak+1)]_x \frac{( 1 + \eta) \eta_\tau }{\bar\rho } \right \} \, dx  \notag\\
=: & L_1 + L_2 + L_3 + L_4. \label{eq-Lag-2}
\end{align}
Here $G_N$ is given in \eqref{def-GN}.

\textit{Step 1. Estimate of $L_2$.}
By virtue of \eqref{eq-alpha1}, \eqref{eq-Qfrak-1} and \eqref{ineq-Px2}, it holds that
\begin{align*}
	&(x\partial_x^{N-1} + N\partial_x^{N-2})\left\{\left(\bar\rho^\gamma \Qfrak \right)_x\frac{(1+\eta)^2}{\bar\rho} \right\} \\
	=&
	(x\partial_x^{N-1} + N\partial_x^{N-2})\left\{ \bigl[ \bar\rho^{\gamma-1} \Qfrak_x + \frac{\gamma}{\gamma-1} ( \rhobar^{\gamma-1} )_x \Qfrak] (1+\eta)^2 \right\} \\
	=& \Bigl[ \rhobar^{\gamma-1}  (x\partial_x^{N}+N\partial_x^{N-1}) \Qfrak
	+ ( \rhobar^{\gamma-1} )_x x (N+\frac{1}{\gamma-1}) \partial_x^{N-1} \Qfrak \Bigr] (1+\eta)^2
	\\
	&\quad + \Bigl [ C ( \rhobar^{\gamma-1} )_x  \partial_x^{N-2} \Qfrak 
	+ C ( \rhobar^{\gamma-1} )_{xx} \partial_x^{N-3} \Qfrak +  C ( \rhobar^{\gamma-1} )_{xx} x \partial_x^{N-2} \Qfrak \Bigr] (1+\eta)^2 \\
	&\quad +  \bigl[ x\partial_x^{N-1} + N \partial_x^{N-2} , (1+\eta)^2 \bigr] \bigl\{\bar\rho^{\gamma-1} \Qfrak_x + \frac{\gamma}{\gamma-1} ( \rhobar^{\gamma-1} )_x \Qfrak \bigr\}.
\end{align*}	
for $2 \le N \le N_0$, where we use the convention that for $i<0$,
$$\partial_x^{-i}f:=0$$ for simplicity.
This yields
\begin{equation} \label{tran-L2}
\begin{aligned} 
   &(x\partial_x^{N}+(N+1)\partial_x^{N-1})\left\{ \left(\bar\rho^\gamma \Qfrak \right)_x\frac{(1+\eta)^2}{\bar\rho} \right\} 
	\\
   =& \frac{1}{\rhobar^{(\gamma-1)N+1}} \partial_x \Big\{ \bigl[\rhobar^{(\gamma-1)(N+1)+1}   (x\partial_x^{N}+N\partial_x^{N-1}) \Qfrak \bigr] (1 + \eta)^2 \Big\}  \\
   &\quad
   - C (1+\eta)^2 ( \rhobar^{\gamma-1} )_x  \partial_x^{N-1} \Qfrak - 2 \rhobar^{\gamma-1} \eta_x (1+\eta) (x\partial_x^{N}+N\partial_x^{N-1}) \Qfrak  
   \\
   &\quad 
   + \partial_x \Big\{ C (1+\eta)^2 \Bigl[  ( \rhobar^{\gamma-1} )_x \partial_x^{N-2} \Qfrak + ( \rhobar^{\gamma-1} )_{xx}  \partial_x^{N-3} \Qfrak + ( \rhobar^{\gamma-1} )_{xx} x \partial_x^{N-2} \Qfrak \Bigr] 
   \\
   &\qquad \qquad + \bigl[ x\partial_x^{N-1} + N \partial_x^{N-2} , (1+\eta)^2 \bigr] \bigl\{ \bar\rho^{\gamma-1} \Qfrak_x + \frac{\gamma}{\gamma-1} ( \rhobar^{\gamma-1} )_x \Qfrak \bigr\} \Big\}  \\
	= &  \frac{1}{\rhobar^{(\gamma-1)N+1}} \partial_x \big\{ -\gamma \rhobar^{(\gamma-1)(N+1)+1} \big[ (1+\eta)^{-2\gamma+2} (1+\eta+x\eta_x)^{-\gamma-1}  G_N  \big] \big\} + H_{N,2},  
\end{aligned}
\end{equation}
for $2 \le N \le N_0$, where 
\begin{align*}
    H_{N,2}= & \mathcal{O}(1) \epsilon  \sum_{j=3}^{N+1} \bar\rho^{(\gamma-1)(j-N-\frac{1}{2})} ( x^2 \abs{ \partial_x^j \eta } + x \abs{ \partial_x^{j - 1} \eta } ) \\
	    & 
	    +\mathcal{O}(1) \epsilon  \sum_{j=3}^{N} \bar\rho^{(\gamma-1)(j-N+\frac{1}{2})} ( x \abs{ \partial_x^j \eta } +  \abs{ \partial_x^{j - 1} \eta } ) \\
        &+\mathcal{O}(1) \epsilon \bar\rho^{(\gamma-1)\frac{-N+1}{2}}(x\abs{\partial_x^2\eta}+\abs{\partial_x\eta})\\    
   &
   +  C (1+\eta)^2 ( \rhobar^{\gamma-1} )_x  \partial_x^{N-1} \Qfrak - 2 \rhobar^{\gamma-1} \eta_x (1+\eta) (x\partial_x^{N}+N\partial_x^{N-1}) \Qfrak
   \\
   &\
   + \partial_x \Big\{ C (1+\eta)^2 \Bigl[  ( \rhobar^{\gamma-1} )_x \partial_x^{N-2} \Qfrak + ( \rhobar^{\gamma-1} )_{xx}  \partial_x^{N-3} \Qfrak + ( \rhobar^{\gamma-1} )_{xx} x \partial_x^{N-2} \Qfrak \Bigr] 
   \\
   &\qquad  + \bigl[ x\partial_x^{N-1} + N \partial_x^{N-2} , (1+\eta)^2 \bigr] \bigl\{ \bar\rho^{\gamma-1} \Qfrak_x + \frac{\gamma}{\gamma-1} ( \rhobar^{\gamma-1} )_x \Qfrak \bigr\} \Big\} 
\end{align*}
is the lower order term. A direct computation shows that \eqref{tran-L2} also holds for $N=1$. As a result, integration by parts leads to
\begin{align*}
L_2 = & - A \gamma \alpha^{3 - 3 \gamma + r_1} \tilde\alpha \int   \rhobar^{(\gamma-1)(N+1)+1} (1+\eta)^{-2\gamma+2} (1+\eta+x\eta_x)^{-\gamma-1}  G_N G_{N,\tau} \, dx \\
    &- A \alpha^{3 - 3 \gamma + r_1} \tilde\alpha \int \bar\rho^{(\gamma - 1 ) N + 1} G_{N-1,\tau} \cdot  H_{N,2} \,dx \\
    & + C \alpha^{3 - 3 \gamma + r_1} \tilde\alpha \rhobar^{(\gamma-1)(N+1)+1} G_N G_{N-1,\tau}|^{x=1}_{x=0} ~~(\text{vanishes})\\
=:& L_{21}+L_{22}.  
\end{align*}
In the a priori estimates, $\eta$ is assumed to be smooth and even (see \rmkref{rmk-trace}) and thus the boundary term $\rhobar^{(\gamma-1)(N+1)+1} G_N G_{N-1,\tau}$ vanishes. In the subsequent proof, the boundary terms resulting from integration by parts vanish likewise.

Then under the a priori assumption \eqref{def-apriori-assum1}, it holds that
\begin{equation*} 
\begin{aligned}  
L_{21} \le  & - \frac{A \gamma}{2} \frac{d}{d \tau} \alpha^{3 - 3 \gamma + r_1} \tilde\alpha \int \rhobar^{(\gamma-1)(N+1)+1} (1+\eta)^{-2\gamma+2} (1+\eta+x\eta_x)^{-\gamma-1}  G_N^2 \, dx  \\
    & -\frac{A \gamma}{2} ( -\alpha^{3 - 3 \gamma + r_1} \tilde\alpha )_\tau \int \rhobar^{(\gamma-1)(N+1)+1} (1+\eta)^{-2\gamma+2} (1+\eta+x\eta_x)^{-\gamma-1}  G_N^2 \, dx  \\
    & + C \epsilon  \alpha^{3 - 3 \gamma + r_1-\min\{(3\gamma-3)/2, 1\}} \tilde\alpha \int \rhobar^{(\gamma-1)(N+1)+1} G_N^2 \, dx. 
\end{aligned}
\end{equation*}
Therefore,
\begin{equation}
\begin{aligned} \label{ineq-L21}
\int_0^\tau L_{21} \,d\tau' \le 
& - \frac{A \gamma}{4} \alpha^{3 - 3 \gamma + r_1} \tilde\alpha \int \rhobar^{(\gamma-1)(N+1)+1} G_N^2 \, dx  \\
    & -\frac{A \gamma}{4} \int_0^\tau ( -\alpha^{3 - 3 \gamma + r_1} \tilde\alpha )_\tau \int \rhobar^{(\gamma-1)(N+1)+1}   G_N^2 \, dx d\tau'\\
    &+ C \epsilon  \alpha^{3 - 3 \gamma + r_1-\min\{(3\gamma-3)/2, 1\}} \tilde\alpha \int \rhobar^{(\gamma-1)(N+1)+1} G_N^2 \, dx.   \\
    \le & - \frac{A \gamma}{4} \alpha^{3 - 3 \gamma + r_1} \tilde\alpha \int \rhobar^{(\gamma-1)(N+1)+1} G_N^2 \, dx  \\
    &  -\frac{A \gamma}{4}  ( 3\gamma- 4 - r_1 ) \int_0^\tau \alpha^{2-3\gamma+r_1}\alpha_\tau\tilde{\alpha} \int \rhobar^{(\gamma-1)(N+1)+1}   G_N^2 \, dx d\tau'\\
    &  + C(2\bar\mu+3\bar\lambda) \int_0^\tau \alpha^{-1} \mathcal{E}_N\,d\tau'+C \epsilon \int_0^\tau \alpha^{-\min\{(3\gamma-3)/2, 1\}} \mathcal{E}_N\,d\tau'.
\end{aligned}
\end{equation}

By the upper bound estimates on $G_{N-1,\tau}$, similar to (and easier than) the lower bound estimates \eqref{ineq-GN2}, we have 
\begin{equation*} 
\int_0^\tau L_{22} \,d\tau' \le C \int_0^\tau \alpha^{\frac{3-3\gamma}{2}} \mathcal{E}_N \,d\tau' + C\int_0^\tau \alpha^{\frac{9-9\gamma}{2}+r_1} \tilde\alpha \int \rhobar^{(\gamma-1)N+1} H_{N,2}^2\,dx d\tau' \, .
\end{equation*}

Next, we control the lower-order term $H_{N,2}$. Following a similar procedure as in \lemref{lem-Px}, we can obtain the estimate 
\begin{align*}
\abs{H_{N,2}}  
\le & C \big( x^2 \abs{\partial_x^{N}\eta} + x \abs{\partial_x^{N-1}\eta}  + \abs{\partial_x^{N-2}\eta} + \abs{\partial_x^{N-3}\eta} \big)\\
& +C \epsilon  \sum_{j=3}^{N+1} \bar\rho^{(\gamma-1)(j-N-\frac{1}{2})} ( x^2 \abs{ \partial_x^j \eta } + x \abs{ \partial_x^{j - 1} \eta } ) \\
	    & 
	    +C \epsilon  \sum_{j=3}^{N} \bar\rho^{(\gamma-1)(j-N+\frac{1}{2})} ( x \abs{ \partial_x^j \eta } +  \abs{ \partial_x^{j - 1} \eta } ) \\
        &+C \epsilon \bar\rho^{(\gamma-1)\frac{-N+1}{2}}(x\abs{\partial_x^2\eta}+\abs{\partial_x\eta}) .
\end{align*}
Then it follows from the Hardy inequality that 
\begin{equation} \label{ineq-L22}
\int_0^\tau L_{22}\,d\tau' \le  C \int_0^\tau \alpha^{\frac{3-3\gamma}{2}} \mathcal{E}_N \,d\tau'.
\end{equation}

\textit{Step 2. Estimate of $L_3$.}
Rewrite the higher-order derivatives of $\mathfrak{Q}_\tau$ as: 
\begin{align}
   & (x\partial_x^{N}+(N+1)\partial_x^{N-1})\left\{ \left(\bar\rho^\gamma \Qfrak_\tau \right)_x\frac{(1+\eta)^2}{\bar\rho} \right\} \nonumber
	\\
   =&  \frac{1}{\rhobar^{(\gamma-1)N+1}} \partial_x \Big\{ \bigl[\rhobar^{(\gamma-1)(N+1)+1}   (x\partial_x^{N}+N\partial_x^{N-1}) \Qfrak_\tau \bigr] (1 + \eta)^2 \Big\} \nonumber \\
   &
   - C (1+\eta)^2 ( \rhobar^{\gamma-1} )_x  \partial_x^{N-1} \Qfrak_\tau -2\rhobar^{\gamma-1} \eta_x (1+\eta) (x\partial_x^{N}+N\partial_x^{N-1}) \Qfrak_\tau \nonumber
   \\
   & 
   + \partial_x \Big\{ C (1+\eta)^2 \Bigl[  ( \rhobar^{\gamma-1} )_x \partial_x^{N-2} \Qfrak_\tau + ( \rhobar^{\gamma-1} )_{xx}  \partial_x^{N-3} \Qfrak_\tau + ( \rhobar^{\gamma-1} )_{xx} x \partial_x^{N-2} \Qfrak_\tau \Bigr]  \label{tran-L3}
   \\
   &\qquad + \bigl[ x\partial_x^{N-1} + N \partial_x^{N-2} , (1+\eta)^2 \bigr] \bigl\{ \bar\rho^{\gamma-1} \Qfrak_{x\tau} + \frac{\gamma}{\gamma-1} ( \rhobar^{\gamma-1} )_x \Qfrak_\tau \bigr\} \Big\} \nonumber \\
	= & \frac{1}{\rhobar^{(\gamma-1)N+1}}  \partial_x\Bigl\{ \rhobar^{(\gamma-1)(N+1)+1} \big[ -\gamma (1+\eta)^{-2\gamma+2} (1+\eta+x\eta_x)^{-\gamma-1} G_{N,\tau} + H_{N,1} \big] \Big\} \nonumber \\&
    + H_{N,3}, \nonumber
\end{align}
for $1 \le N \le N_0$, where $H_{N,1}$ is defined in \lemref{lem-Pxt} and 
\begin{align*}
&H_{N,3}\\
=& \mathcal{O} (1) \epsilon \sum_{j=3}^{N+1} \bar\rho^{(\gamma-1)(j-N-\frac{1}{2} )} (  x^2\abs{ \partial_x^{j} \eta_\tau } +   x\abs{ \partial_x^{j-1} \eta_\tau }  +  x^2 \abs{ \partial_x^{j} \eta} +  x \abs{ \partial_x^{j-1} \eta}) \\
	    &+ \mathcal{O} (1) \epsilon \sum_{j=3}^{N} \bar\rho^{(\gamma-1)(j-N+\frac{1}{2} )} ( x\abs{ \partial_x^{j} \eta_\tau } +  \abs{ \partial_x^{j-1} \eta_\tau }  +  x \abs{ \partial_x^{j} \eta} +  \abs{ \partial_x^{j-1} \eta})  \\
        &+\mathcal{O}(1) \epsilon \bar\rho^{(\gamma-1)\frac{-N+1}{2}}(x\abs{\partial_x^2\eta_\tau}+\abs{\partial_x\eta_\tau}+ x\abs{\partial_x^2\eta}+\abs{\partial_x\eta})\\
        & + C (1+\eta)^2 ( \rhobar^{\gamma-1} )_x  \partial_x^{N-1} \Qfrak_\tau -2\rhobar^{\gamma-1} \eta_x (1+\eta) (x\partial_x^{N}+N\partial_x^{N-1}) \Qfrak_\tau
   \\
   & 
   + \partial_x \Big\{ C(1+\eta)^2 \Bigl[  ( \rhobar^{\gamma-1} )_x \partial_x^{N-2} \Qfrak_\tau + ( \rhobar^{\gamma-1} )_{xx}  \partial_x^{N-3} \Qfrak_\tau + ( \rhobar^{\gamma-1} )_{xx} x \partial_x^{N-2} \Qfrak_\tau \Bigr] 
   \\
   &\qquad + \bigl[ x\partial_x^{N-1} + N \partial_x^{N-2} , (1+\eta)^2 \bigr] \bigl\{ \bar\rho^{\gamma-1} \Qfrak_{x\tau} + \frac{\gamma}{\gamma-1} ( \rhobar^{\gamma-1} )_x \Qfrak_\tau \bigr\} \Big\} 
\end{align*}
are lower-order terms. Consequently, it follows from integration by parts that
\begin{align*}
L_3  = & -(2\bar\mu+\bar\lambda) \alpha^{3 - 3 \gamma + r_1} \int  \rhobar^{(\gamma-1)  (N+1)+1} (1+\eta)^{-2\gamma+2} (1+\eta+x\eta_x)^{-\gamma-1}  G_{N,\tau}^2\,dx\\ 
        & + C(2\bar\mu+\bar\lambda) \alpha^{3 - 3 \gamma + r_1}  \int   \rhobar^{(\gamma-1)(N+1)+1} G_{N,\tau} H_{N,1} \,dx\\
        &- C(2\bar\mu+\bar\lambda)\alpha^{3 - 3 \gamma + r_1} \int \bar\rho^{(\gamma - 1 ) N + 1} G_{N-1,\tau} H_{N,3} \,dx \\
        & + C(2\bar\mu+\bar\lambda)\alpha^{3 - 3 \gamma + r_1} \rhobar^{(\gamma-1)  (N+1)+1} G_{N,\tau} G_{N-1,\tau}|^{x=1}_{x=0} ~~(\text{vanishes})\\
        &- C(2\bar\mu+\bar\lambda) \alpha^{3 - 3 \gamma + r_1} \rhobar^{(\gamma-1)  (N+1)+1} H_{N,1} G_{N-1,\tau}|^{x=1}_{x=0} ~~(\text{vanishes}) \\
    =: & L_{31} + L_{32} + L_{33},
\end{align*}
where
\begin{align} 
&\begin{aligned} \label{ineq-L31}
\int_0^\tau L_{31}  \,d\tau' \le -\frac{2\bar\mu+\bar\lambda}{2} \int_0^\tau \alpha^{3 - 3 \gamma + r_1} \int  \rhobar^{(\gamma-1)(N+1)+1} G_{N,\tau}^2\,dx \, d\tau',
\end{aligned}
\\
&\begin{aligned} \label{ineq-L32}
\int_0^\tau L_{32} \,d\tau' 
\le & \omega(2\bar\mu+\bar\lambda) \int_0^\tau \alpha^{3 - 3 \gamma + r_1} \int  \rhobar^{(\gamma-1)(N+1)+1} G_{N,\tau}^2\,dx \, d\tau'\\
& + C_{\omega}\epsilon(2\bar\mu+\bar\lambda) \int_0^\tau  \alpha^{3 - 3 \gamma + r_1}  \int   \rhobar^{(\gamma-1)(N+1)+1} \\
& \qquad\cdot \big[ x^4 (\partial_x^{N+1}\eta)^2 + x^2 (\partial_x^N\eta)^2 +(\partial_x^{N-1}\eta)^2\big]\,dx\,d\tau'\\
\le & \omega(2\bar\mu+\bar\lambda) \mathcal{D}_N+C_{\omega} \epsilon (2\bar\mu+\bar\lambda) \int_0^\tau \alpha^{-1} \mathcal{E}_N \,d\tau'
\end{aligned}
\intertext{where $\omega$ is a small constant to be determined later and}
&\begin{aligned} \nonumber
\int_0^\tau L_{33} \,d\tau' \le 
& C(2\bar\mu+\bar\lambda) \int_0^\tau \alpha^{3 - 3 \gamma + r_1} \int \bar\rho^{(\gamma - 1 ) N + 1} G_{N-1,\tau}^2\,dx \, d\tau' \\
    & + C(2\bar\mu+\bar\lambda)  \int_0^\tau \alpha^{3 - 3 \gamma + r_1}\int \bar\rho^{(\gamma - 1 ) N + 1} H_{N,3}^2 \,dx \, d\tau' \\
\le & C(2\bar\mu+\bar\lambda) \Big(\int_0^\tau \alpha^{-1} \mathcal{E}_{N}\,d\tau' +  \int_0^\tau \alpha^{3 - 3 \gamma + r_1}  \int \bar\rho^{(\gamma - 1 ) N + 1} H_{N,3}^2 \,dx \, d\tau' \Big).
\end{aligned}
\end{align}
Through a similar computation, we obtain the following control of $H_{N,3}$
\begin{align*}
\abs{H_{N,3}} \le & C \big( x^2 \abs{\partial_x^{N}\eta_\tau} + x \abs{\partial_x^{N-1}\eta_\tau}  + \abs{\partial_x^{N-2}\eta_\tau} + \abs{\partial_x^{N-3}\eta_\tau} \big)\\
&+C \epsilon \sum_{j=3}^{N+1} \bar\rho^{(\gamma-1)(j-N-\frac{1}{2} )} (  x^2\abs{ \partial_x^{j} \eta_\tau } +   x\abs{ \partial_x^{j-1} \eta_\tau }  +  x^2 \abs{ \partial_x^{j} \eta} +  x \abs{ \partial_x^{j-1} \eta}) \\
	    &+C\epsilon \sum_{j=3}^{N} \bar\rho^{(\gamma-1)(j-N+\frac{1}{2} )} ( x\abs{ \partial_x^{j} \eta_\tau } +  \abs{ \partial_x^{j-1} \eta_\tau }  +  x \abs{ \partial_x^{j} \eta} +  \abs{ \partial_x^{j-1} \eta})  \\
        &+C \epsilon \bar\rho^{(\gamma-1)\frac{-N+1}{2}}(x\abs{\partial_x^2\eta_\tau}+\abs{\partial_x\eta_\tau}+ x\abs{\partial_x^2\eta}+\abs{\partial_x\eta}).
\end{align*}
Then the Hardy inequality yields
\begin{equation} \label{ineq-L33}
\int_0^\tau L_{33} \,d\tau' \le  C(2\bar\mu+\bar\lambda)\left( \int_0^\tau \alpha^{-1} \mathcal{E}_{N}\,d\tau' +   \epsilon \mathcal{D}_{N}  \right).
\end{equation} 

\textit{Step 3. Estimate of $L_4$.}  
A procedure analogous to the estimate of $L_2$ yields that for $2 \le N \le N_0$
\begin{equation} \label{tran-L4}
\begin{aligned}
	&(x\partial_x^{N-1} + N\partial_x^{N-2}) \left\{ [ \rhobar^\gamma (\Qfrak+1)]_x \frac{( 1 + \eta) \eta_\tau }{\bar\rho } \right\} \\
	= &
	(x\partial_x^{N-1} + N\partial_x^{N-2})\left\{ \bigl[ \bar\rho^{\gamma-1} \Qfrak_x + \frac{\gamma}{\gamma-1} ( \rhobar^{\gamma-1} )_x (\Qfrak+1) ] (1+\eta) \eta_\tau \right\} \\
	= & \Bigl[ \rhobar^{\gamma-1}  (x\partial_x^{N}+N\partial_x^{N-1}) \Qfrak
	+ ( \rhobar^{\gamma-1} )_x x (N+\frac{1}{\gamma-1}) \partial_x^{N-1} \Qfrak \Bigr] (1+\eta)\eta_\tau
	\\
	& + \Bigl [ C ( \rhobar^{\gamma-1} )_x  \partial_x^{N-2} \Qfrak 
	+ C ( \rhobar^{\gamma-1} )_{xx} \partial_x^{N-3} \Qfrak +  C ( \rhobar^{\gamma-1} )_{xx} x \partial_x^{N-2} \Qfrak \Bigr] (1+\eta)\eta_\tau \\
	& +  \bigl[ x\partial_x^{N-1} + N \partial_x^{N-2} , (1+\eta)\eta_\tau \bigr] \bigl\{\bar\rho^{\gamma-1} \Qfrak_x + \frac{\gamma}{\gamma-1} ( \rhobar^{\gamma-1} )_x \Qfrak \bigr\}\\
    &\quad + \frac{\gamma}{\gamma-1} (x\partial_x^{N-1} + N\partial_x^{N-2}) \big[( \rhobar^{\gamma-1} )_x (1+\eta) \eta_\tau \big],
\end{aligned} 
\end{equation}
then for $1 \le N \le N_0$
\begin{align*}
   &(x\partial_x^{N}+(N+1)\partial_x^{N-1}) \left\{ [ \rhobar^\gamma (\Qfrak+1)]_x \frac{( 1 + \eta) \eta_\tau }{\bar\rho } \right\}
	\\
   =& \frac{1}{\rhobar^{(\gamma-1)N+1}} \partial_x \Big\{ \bigl[\rhobar^{(\gamma-1)(N+1)+1}   (x\partial_x^{N}+N\partial_x^{N-1}) \Qfrak \bigr] (1 + \eta)\eta_\tau \Big\}\\
   &
   -  C (1+\eta)\eta_\tau ( \rhobar^{\gamma-1} )_x  \partial_x^{N-1} \Qfrak -  \rhobar^{\gamma-1} (\eta_x \eta_\tau + ( 1 + \eta ) \eta_{x\tau} )(x\partial_x^{N}+N\partial_x^{N-1}) \Qfrak
   \\
   &
   + \partial_x \Big\{ C (1+\eta)\eta_\tau \Bigl[  ( \rhobar^{\gamma-1} )_x \partial_x^{N-2} \Qfrak + ( \rhobar^{\gamma-1} )_{xx}  \partial_x^{N-3} \Qfrak + ( \rhobar^{\gamma-1} )_{xx} x \partial_x^{N-2} \Qfrak \Bigr] 
   \\
   &\qquad  + \bigl[ x\partial_x^{N-1} + N \partial_x^{N-2} , (1+\eta)\eta_\tau \bigr] \bigl\{ \bar\rho^{\gamma-1} \Qfrak_x + \frac{\gamma}{\gamma-1} ( \rhobar^{\gamma-1} )_x \Qfrak \bigr\}  \\
   &\qquad    + \frac{\gamma}{\gamma-1} (x\partial_x^{N-1} + N\partial_x^{N-2}) \big[( \rhobar^{\gamma-1} )_x (1+\eta) \eta_\tau \big] \Big\} \\
	= &  \frac{1}{\rhobar^{(\gamma-1)N+1}} \partial_x \big\{ -\gamma \rhobar^{(\gamma-1)(N+1)+1} \big[ (1+\eta)^{-2\gamma+1} (1+\eta+x\eta_x)^{-\gamma-1} \eta_\tau G_N  \big] \big\} + H_{N,4} .  
\end{align*}
A similar procedure to \lemref{lem-Px} implies that the lower-order term
\begin{align*}
H_{N,4} = & \mathcal{O}(1) \epsilon  \sum_{j=3}^{N+1} \bar\rho^{(\gamma-1)(j-N-\frac{1}{2})} ( x^2 \abs{ \partial_x^j \eta } + x \abs{ \partial_x^{j - 1} \eta } ) \\
	    & 
	    +\mathcal{O}(1) \epsilon  \sum_{j=3}^{N} \bar\rho^{(\gamma-1)(j-N+\frac{1}{2})} ( x \abs{ \partial_x^j \eta } +  \abs{ \partial_x^{j - 1} \eta } ) \\
        &+\mathcal{O}(1) \epsilon \bar\rho^{(\gamma-1)\frac{-N+1}{2}}(x\abs{\partial_x^2\eta}+\abs{\partial_x\eta})\\ 
        &   + \partial_x \Big\{ C (1+\eta)\eta_\tau \Bigl[  ( \rhobar^{\gamma-1} )_x \partial_x^{N-2} \Qfrak + ( \rhobar^{\gamma-1} )_{xx}  \partial_x^{N-3} \Qfrak + ( \rhobar^{\gamma-1} )_{xx} x \partial_x^{N-2} \Qfrak \Bigr] 
   \\
   &\qquad  + \bigl[ x\partial_x^{N-1} + N \partial_x^{N-2} , (1+\eta)\eta_\tau \bigr] \bigl\{ \bar\rho^{\gamma-1} \Qfrak_x + \frac{\gamma}{\gamma-1} ( \rhobar^{\gamma-1} )_x \Qfrak \bigr\}  \\
   &\qquad    + \frac{\gamma}{\gamma-1} (x\partial_x^{N-1} + N\partial_x^{N-2}) \big[( \rhobar^{\gamma-1} )_x (1+\eta) \eta_\tau \big] \Big\} .
\end{align*}
Therefore, integration by parts shows that
\begin{align*}
L_4 = & -4\bar\mu \gamma  \alpha^{3 - 3 \gamma + r_1} \int \rhobar^{(\gamma-1)(N+1)+1} (1+\eta)^{-2\gamma+1} (1+\eta+x\eta_x)^{-\gamma-1} \eta_\tau G_N G_{N,\tau} \,dx \\
 & - 4\bar\mu  \alpha^{3 - 3 \gamma + r_1} \int \rhobar^{(\gamma-1)N+1} G_{N-1,\tau} H_{N,4} \,dx\\
 & + C \bar\mu \alpha^{3 - 3 \gamma + r_1}  \rhobar^{(\gamma-1)(N+1)+1} \eta_\tau G_N G_{N-1,\tau} |^{x=1}_{x=0} ~~(\text{vanishes})\\
  \le & \omega(2\bar\mu+\bar\lambda) \alpha^{3 - 3 \gamma + r_1}  \int \rhobar^{(\gamma-1)(N+1)+1} G_{N,\tau}^2 \,dx \\
  & + C_{\omega}\epsilon \frac{\bar\mu^2}{2\bar\mu+\bar\lambda} \alpha^{3 - 3 \gamma + r_1}  \int \rhobar^{(\gamma-1)(N+1)+1} G_{N}^2 \,dx\\
  & + C\bar\mu \alpha^{3 - 3 \gamma + r_1} \int \rhobar^{(\gamma-1)N+1} (G_{N-1,\tau}^2 + H_{N,4}^2) \,dx.
\end{align*}

Similarly, we have the following control of $H_{N,4}$
\begin{align*}
\abs{H_{N,4}} \le 
        &C \big( x^2 \abs{\partial_x^{N}\eta_\tau} + x \abs{\partial_x^{N-1}\eta_\tau}  + \abs{\partial_x^{N-2}\eta_\tau} \big)\\
&+C \epsilon \sum_{j=3}^{N+1} \bar\rho^{(\gamma-1)(j-N-\frac{1}{2} )} (  x^2\abs{ \partial_x^{j} \eta_\tau } +   x\abs{ \partial_x^{j-1} \eta_\tau }  +  x^2 \abs{ \partial_x^{j} \eta} +  x \abs{ \partial_x^{j-1} \eta}) \\
	    &+C\epsilon \sum_{j=3}^{N} \bar\rho^{(\gamma-1)(j-N+\frac{1}{2} )} ( x\abs{ \partial_x^{j} \eta_\tau } +  \abs{ \partial_x^{j-1} \eta_\tau }  +  x \abs{ \partial_x^{j} \eta} +  \abs{ \partial_x^{j-1} \eta})  \\
        &+C \epsilon \bar\rho^{(\gamma-1)\frac{-N+1}{2}}(x\abs{\partial_x^2\eta_\tau}+\abs{\partial_x\eta_\tau}+ x\abs{\partial_x^2\eta}+\abs{\partial_x\eta}).
\end{align*}

It follows from the Hardy inequality that
\begin{equation}
\label{ineq-L4}
\int_0^\tau L_4 \,d\tau' \le  \big[\omega(2\bar\mu + \bar\lambda) +C\epsilon\bar\mu\big] \mathcal{D}_N + C_{\omega} \Big (\frac{\epsilon \bar\mu^2}{2\bar\mu + \bar\lambda} + \bar\mu \Big) \int_0^\tau \alpha^{-1} \mathcal{E}_{N}\,d\tau'. 
\end{equation} 

\textit{Step 4. Estimate of $L_1$.}   
\begin{equation} \label{ineq-L1-imp}
\begin{aligned}
&\int_0^\tau L_1 \,d\tau'  \\
\le & C \int_0^\tau \alpha^{\frac{9-9\gamma}{2}+r_1} \tilde\alpha \int\bar\rho^{(\gamma-1)N+1}\left\{[x\partial_x^{N}+(N+1)\partial_x^{N-1}] \big(x\eta(1+\eta)\big)\right\}^2\,dx\,d\tau' \\
&  + C \int_0^\tau \alpha^{\frac{3-3\gamma}{2}} \mathcal{E}_N \,d\tau',
\end{aligned}
\end{equation}
where
\begin{equation} \label{tran-L1}
\begin{aligned}
&\abs{[x\partial_x^{N}+(N+1)\partial_x^{N-1}] (x\eta(1+\eta))}\\
=&\mathcal{O}(1) \sum_{k=0}^2  x^{2-k} [ \abs{\partial_x^{N-k}\eta} + \sum_{j=1}^{N-k} \abs{\partial_x^{N-k-j}\eta}\abs{\partial_x^j\eta} ]\\
=&\mathcal{O}(1) \sum_{k=0}^2 x^{2-k} \Big[ \abs{\partial_x^{N-k}\eta} + \epsilon \abs{\partial_x^{N-k-1}\eta}   + \epsilon \bar\rho^{-1/2(\gamma-1)}\abs{\partial_x^{N-k-2}\eta}  \\
&\qquad\qquad\qquad\quad+ \epsilon  \sum_{j=3}^{N-k} \bar\rho^{(\gamma-1) ( -N+k+j+\frac{3}{2} )} \abs{\partial_x^{j}\eta} \Big].
\end{aligned}
\end{equation}

Then the Hardy inequality shows that
\begin{equation}\label{ineq-L1}
\begin{aligned}
\int_0^\tau L_1\,d\tau' \le &  C \int_0^\tau \alpha^{\frac{3-3\gamma}{2}} \mathcal{E}_N \,d\tau' +C_{\omega} \int_0^\tau \alpha^{\frac{9-9\gamma}{2}+r_1} \tilde\alpha \sum_{j=0}^{N} \int \bar\rho^{(\gamma-1)N+1} x^4 (\partial_x^{N-j}\eta)^2\,dxd\tau'
\\
\le & 
C \int_0^\tau \alpha^{\frac{3-3\gamma}{2}} \mathcal{E}_N \,d\tau'.
\end{aligned}
\end{equation}

Combining \eqref{eq-Lag-2}, \eqref{ineq-L21}, \eqref{ineq-L22}, \eqref{ineq-L31}, \eqref{ineq-L32}, \eqref{ineq-L33}, \eqref{ineq-L4} and \eqref{ineq-L1},  for $r_1 \le \min \{ 3\gamma-4, 1 \}$, and taking $\omega$ sufficiently small, we can derive \eqref{ineq-energy1}. 
\hfill$\Box$

\section{Proof of main theorems} 
This section is devoted to proving \autoref{thm-stab} and \autoref{thm-vanish}. 

\subsection{The existence} 
Observing the rapid growth of $\alpha$ \eqref{ineq-alpha1}, a combination of \eqref{ineq-energy1} with an iterative process and Gr\"onwall's inequality yields that under the a priori assumption \eqref{def-apriori-assumx}, \eqref{def-apriori-assumt} and \eqref{def-apriori-assum1} with sufficiently small $\epsilon$,  
\begin{equation} \label{ineq-total}
     \mathcal{E}_{N_0}+ (1-r_1) \mathcal{E}_{N_0}^{(1)} + (4-3\gamma+r_1) \mathcal{E}_{N_0}^{(2)} +(2\bar\mu+\bar\lambda)\mathcal{D}_{N_0} \le C \mathcal{E}_{in},
\end{equation}
holds for any $r_1 \le \min \{ 3\gamma-4,1\}$.
Based on the aforementioned energy estimates \eqref{ineq-total}, we can derive more estimates of $\eta_{\tau}$ and $\eta_{\tau\tau}$.

\begin{lemma} We further have the following estimates of $\eta$ and $\eta_{\tau\tau}$:
\begin{enumerate} [label = \rmfamily(\roman*),ref = \rmfamily(\roman*)]
    \item For any fixed  $r_1< 1, r_1 \le 3\gamma-4$, any $1 \le N \le N_0-1$ and any time $T>0$,
\begin{equation} \label{ineq-etatt1}
\int_0^\tau \alpha^{1+r_1} \int \bar\rho^{(\gamma-1)N+1}x^4(\partial_x^{N}\eta_{\tau\tau})^2\,dx d\tau' \le C [1+ (2\bar\mu+\bar\lambda)^2]\mathcal{E}_{in}.
\end{equation}

\item For any fixed  $r_1 \le  \min\{3\gamma-4,1\}$  any $1 \le N \le N_0-2$ and any time $T>0$,
\begin{equation} \label{ineq-etatt2}
\sup_{0 \le \tau \le T} \alpha^{1+r_1} \int \bar\rho^{(\gamma-1)N+1}x^4(\partial_x^{N}\eta_{\tau\tau})^2\,dx \le C [1+ (2\bar\mu+\bar\lambda)^2]\mathcal{E}_{in}.
\end{equation}

\item For any time $T>0$, $0 \le N \le N_0-1$ and $r_2<0$,
\begin{equation} \label{ineq-eta1}
\begin{aligned}
\sup_{0 \le \tau \le T}\int\bar\rho^{(\gamma-1)(N+1)+1} x^4 (\partial_x^{N+1}\eta)^2\,dx  \le &C\mathcal{E}_{in},\\
\int _0^T \alpha^{2r_2}\int\bar\rho^{(\gamma-1)(N+1)+1} x^4 (\partial_x^{N+1}\eta)^2\,dx d\tau
    \le & C\mathcal{E}_{in}.
\end{aligned}
\end{equation}

\item For any time $T>0$,
\begin{equation} \label{ineq-H2}
\begin{alignedat}{3}
&\sup_{0 \le \tau \le T} \norm{\eta}{H^2}^2 &&\le C\mathcal{E}_{in};\\
&\sup_{0 \le \tau  < T } \alpha^{1+r_1} \norm{\eta_\tau}{H^2}^2  &&\le  C\mathcal{E}_{in}
&&\quad \text{for any } ~r_1 \le \min \{3\gamma-4,1 \};\\
&\norm{\alpha^{r_2}\eta}{L^2([0,T],H^2)}^2  && \le C\mathcal{E}_{in}
&& \quad\text{for any } ~r_2 < 0;\\
&\norm{\alpha^{\frac{1+r_1}{2}}\eta_\tau}{L^2([0,T],H^2)}^2  &&\le  C\mathcal{E}_{in}
&&\quad\text{for any } ~r_1 <1,~r_1 \le 3\gamma-4 ;\\
&\norm{ \alpha^{\frac{4-3\gamma+r_1}{2}}\eta}{L^2([0,T],H^2)}^2 &&\le  C\mathcal{E}_{in}   
&&\quad \text{for any } ~r_1 \le 1, ~r_1 < 3\gamma-4;\\
&\norm{\alpha^{\frac{1+r_1}{2}}\eta_{\tau\tau}}{L^2([0,T];H^1)}^2 &&\le   C_{r_1} [1+ (2\bar\mu+\bar\lambda)^2]\mathcal{E}_{in}
&& \quad\text{for any } ~r_1 < 1, ~r_1\le 3\gamma-4;\\
&\sup_{0 \le \tau \le T} \norm{\alpha^{\frac{1+r_1}{2}}\eta_{\tau\tau}}{H^1}^2 &&\le   C [1+ (2\bar\mu+\bar\lambda)^2]\mathcal{E}_{in}
&& \quad\text{for any } ~r_1 \le \min \{3\gamma-4,1 \}.
\end{alignedat}
\end{equation}
\end{enumerate}
\end{lemma}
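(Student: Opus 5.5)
The plan is to derive every item from the closed bound \eqref{ineq-total}, together with the equation \eqref{eq-Lag-0} solved for $\eta_{\tau\tau}$ and the weighted inequalities of Section 3.

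\emph{Items (i) and (ii): estimates on $\eta_{\tau\tau}$.} Dividing \eqref{eq-Lag-0} by $\alpha x$ gives
\begin{equation*}
\eta_{\tau\tau}=-\frac{\alpha_\tau}{\alpha}\eta_\tau+c_1\alpha^{2-3\gamma}\tilde\alpha(1+\eta)\eta-A\alpha^{2-3\gamma}\tilde\alpha\frac{(1+\eta)^2}{x\bar\rho}(\bar\rho^\gamma\Qfrak)_x+(\text{viscous terms}),
\end{equation*}
where the viscous terms carry the prefactor $(2\bar\mu+\bar\lambda)\alpha^{2-3\gamma}$ or $\bar\mu\alpha^{2-3\gamma}$.

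\begin{enumerate}
\item Apply $\partial_x^N$, multiply by $\bar\rho^{(\gamma-1)N+1}x^4\alpha^{1+r_1}$ and integrate.
\item The term $\alpha_\tau\alpha^{-1}\partial_x^N\eta_\tau$ is bounded by $\beta_2^2\alpha^{1+r_1}\int\bar\rho^{(\gamma-1)N+1}x^4(\partial_x^N\eta_\tau)^2$. This is bounded pointwise in time by $\mathcal E_N$, and it is time-integrable via $\mathcal E_N^{(1)}$ (using $\alpha_\tau\ge\beta_1\alpha$), which needs $r_1<1$.
\item For the pressure term, expand $\partial_x^N[\bar\rho^{-1}(\bar\rho^\gamma\Qfrak)_x]$ using \eqref{eq-alpha2}$_1$ and \lemref{lem-Px}. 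This produces $\bar\rho^{\gamma-1}\partial_x^{N+1}\Qfrak$ plus lower-order terms. By \eqref{ineq-Px1} and the Hardy inequality (\lemref{lem-Hardy}), its weighted norm is controlled by $\int\bar\rho^{(\gamma-1)(N+2)+1}(x^4(\partial_x^{N+2}\eta)^2+x^2(\partial_x^{N+1}\eta)^2)$. That quantity lies in $\mathcal E_{N+1}$, which is admissible because $N\le N_0-1$.
\item The time prefactor is $\alpha^{1+r_1}\alpha^{4-6\gamma}\tilde\alpha^2\le\alpha^{3-3\gamma+r_1}\tilde\alpha\cdot\alpha^{3-3\gamma}$. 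The extra factor $\alpha^{3-3\gamma}$ is bounded, so the supremum bound follows. It is also integrable, since $\alpha\ge\alpha_0e^{\beta_1\tau}$, which gives the $L^2$-in-time bound.
\item The viscous terms are treated the same way with $\eta_\tau$ in place of $\eta$. Pointwise in time they are controlled through $\mathcal E_{N+1}$. For the sup bound, one derivative fewer is available, so $N\le N_0-2$ is required, and the viscous factor is squared, producing $(2\bar\mu+\bar\lambda)^2$. The time-integrated version uses $\mathcal D_{N+1}$ together with the boundedness of $\alpha^{1+r_1}\alpha^{2(2-3\gamma)}\alpha^{-(3-3\gamma+r_1)}$.
\end{enumerate}

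\emph{Item (iii): bounds on $\eta$ itself.} These come from the pressure part of $\mathcal E_N$, which only gives $\alpha^{3-3\gamma+r_1}\tilde\alpha\int\cdots\le C\mathcal E_{in}$; that weight decays when $r_1<3\gamma-4$. To remove the weight, take $r_1=3\gamma-4$, which is admissible when $3\gamma-4\le1$. Then $\alpha^{3-3\gamma+r_1}\tilde\alpha\ge\beta_3$, and the unweighted supremum follows. If instead $3\gamma-4>1$, write $\partial_x^{N+1}\eta(\tau)=\partial_x^{N+1}\eta_0+\int_0^\tau\partial_x^{N+1}\eta_\tau$ and use the $\mathcal E_N$ kinetic control: with $r_1=1$, the term $\alpha^{2}\int\bar\rho^{(\gamma-1)N+1}x^4(\partial_x^N\eta_\tau)^2$ is bounded, and the exponential growth of $\alpha$ makes $\int_0^\infty\alpha^{-1}d\tau<\infty$. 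The weight mismatch $\bar\rho^{(\gamma-1)(N+1)+1}$ versus $\bar\rho^{(\gamma-1)N+1}$ is harmless, since the $\eta_\tau$ weight is weaker. The $L^2$-in-time bound with $\alpha^{2r_2}$, $r_2<0$, then follows from the supremum bound and the integrability of $\alpha^{2r_2}$.

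\emph{Item (iv): $H^2$ and $H^1$ bounds.} The unweighted bounds follow from the weighted ones by repeated application of the Hardy inequality. Near $x=\bar R$, each step trades one power of $\bar\rho\sim(\bar R-x)^{1/(\gamma-1)}$ weight for one derivative. Since $N_0\ge4+\lceil1/(\gamma-1)\rceil$, starting from weight $\bar\rho^{(\gamma-1)(N_0+1)+1}$ leaves enough derivatives to reach weight $1$ at the level of $\partial_x^2$. Near $x=0$, the $x^4$ and $x^2$ weights are removed by Hardy with $k<1$ and with $k>1$ respectively, and \lemref{lem-embedding} handles the pointwise quantities; evenness ensures that the traces at the origin cause no difficulty. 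Each line of \eqref{ineq-H2} then inherits its time weight and range of $r_1$ from (i)--(iii) and from $\mathcal E^{(1)}_{N_0}$ and $\mathcal E^{(2)}_{N_0}$.

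The main obstacle is the bookkeeping in this final step. One must count exactly how many Hardy steps reduce $\bar\rho^{(\gamma-1)(N+1)+1}x^4$ to an unweighted $H^2$ norm, and check that $N_0-2$ derivatives suffice for the $\eta_{\tau\tau}$ sup bound. I expect this to fix the requirement $N_0=\max\{4+\lceil\frac1{\gamma-1}\rceil,6\}$.
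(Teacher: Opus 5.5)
Your overall plan matches the paper's: bound $\eta_{\tau\tau}$ through the equation and \eqref{ineq-total}, integrate in time for $\eta$, then use Hardy for the unweighted norms. Items (iii)--(iv) are essentially fine. In (iii), your case split is a legitimate alternative to the paper's single Minkowski argument with $r_1=\min\{3\gamma-3,1\}-1$: for $\gamma\le 5/3$ you take $r_1=3\gamma-4$, so $\alpha^{3-3\gamma+r_1}\tilde\alpha=\tilde\alpha/\alpha\ge\beta_3$ gives the bound directly, and for $\gamma>5/3$ you use Minkowski in time with $r_1=1$.

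\textbf{The gap is in step 3 of (i)--(ii), at the center $x=0$.} After dividing by $\alpha x$, the pressure term is $x^{-1}P$ with $P=(1+\eta)^2\bar\rho^{-1}(\bar\rho^\gamma\Qfrak)_x$. You then expand $\partial_x^N[\bar\rho^{-1}(\bar\rho^\gamma\Qfrak)_x]$ and claim a bound by $\int\bar\rho^{(\gamma-1)(N+2)+1}(x^4(\partial_x^{N+2}\eta)^2+x^2(\partial_x^{N+1}\eta)^2)\,dx$. That is the bound for $x^{-1}\partial_x^N P$, not for $\partial_x^N(x^{-1}P)$. The two differ by the Leibniz terms $c_{N,k}\,x^{-1-k}\partial_x^{N-k}P$, $1\le k\le N$. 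Against the weight $x^4$, the terms with $k\ge3$ behave like $x^{-4}(\partial_x^{N-3}P)^2$ or worse near $x=0$. So once $N\ge 3$ they are generically not separately integrable, and the sum is finite only through a cancellation coming from the oddness of $P$. This is exactly the coordinate singularity the paper cites as the reason plain $\partial_x^N$ fails. Applying "the Hardy inequality" term by term does not resolve it.

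There are two ways to close this.

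\emph{The paper's way.} Do not divide by $x$; instead apply $x\partial_x^N+(N+1)\partial_x^{N-1}$ to \eqref{eq-Lag-0}, i.e.\ use \eqref{eq-Lag-1}.
\begin{itemize}
\item Since $(x\partial_x^N+(N+1)\partial_x^{N-1})(x\eta_{\tau\tau})=G_{N-1,\tau\tau}$, the left side becomes $\alpha G_{N-1,\tau\tau}+\alpha_\tau G_{N-1,\tau}+\cdots$.
\item Every right-hand term has already been rewritten with no $1/x$ in \eqref{tran-L2}, \eqref{tran-L3}, \eqref{tran-L4}, \eqref{tran-L1}. For example, the pressure part squared against $\bar\rho^{(\gamma-1)N+1}$ is bounded by $\bar\rho^{(\gamma-1)(N+2)+1}G_{N+1}^2+\bar\rho^{(\gamma-1)N+1}(G_N^2+H_{N,2}^2)$.
\item $x^4(\partial_x^N\eta_{\tau\tau})^2$ is then recovered from $G_{N-1,\tau\tau}^2$ by the coercivity in \lemref{lem-GN}, with its lower-order remainder handled inductively in $N$.
\end{itemize}

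\emph{Keeping your route.} Isolate the singular factor explicitly.
\begin{itemize}
\item By $A(\bar\rho^\gamma)_x=-c_1\bar\rho x$ and \eqref{eq-Qfrak-1}, $x^{-1}P=(1+\eta)^2[\bar\rho^{\gamma-1}x^{-1}\Qfrak_x-\frac{c_1}{A}\Qfrak]$.
\item Also $x^{-1}\Qfrak_x=-\gamma(1+\eta)^{-2\gamma}(1+\eta+x\eta_x)^{-\gamma-1}[\eta_{xx}+4x^{-1}\eta_x+2(1+\eta)^{-1}\eta_x^2]$. So the only singular quantity is $x^{-1}\eta_x$, and $x^{-1}\eta_{x\tau}$ in the viscous terms.
\item By evenness, $\partial_x^N(x^{-1}\eta_x)=x^{-N-1}\int_0^x y^N\partial_y^{N+2}\eta\,dy$.
\item The classical Hardy inequality for the averaging operator $F\mapsto x^{-1-a}\int_0^x y^aF\,dy$ with $a=N-2>-\frac12$, together with the monotonicity of $\bar\rho$, gives $\int\bar\rho^{m}x^4(\partial_x^N(x^{-1}\eta_x))^2\,dx\le C\int\bar\rho^{m}x^4(\partial_x^{N+2}\eta)^2\,dx$ for $N\ge2$. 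The case $N=1$ must be checked by hand.
\end{itemize}
This restores your claimed level $\mathcal E_{N+1}$. The paper's route reuses decompositions already proved for the energy estimates. Yours avoids the coercivity lemma, but only if this averaging step is actually written out.

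\textbf{A smaller slip in step 5.} The viscous term contains $\partial_x^{N+2}\eta_\tau$, which pointwise in time sits in the kinetic part of $\mathcal E_{N+2}$, not $\mathcal E_{N+1}$. That is the precise reason (ii) requires $N\le N_0-2$; the paper indeed bounds it by $\mathcal E_{N+2}$.
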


\begin{proof}
It follows from  \eqref{eq-Lag-1} and \eqref{ineq-GN1} that $\eta_{\tau\tau}$ can be estimated by 
\begin{align*}
&\int_0^\tau \alpha^{1+r_1} \int \bar\rho^{(\gamma-1)N+1}x^4(\partial_x^{N}\eta_{\tau\tau})^2\,dx d\tau' \\
\le & C \int_0^\tau \alpha^{1+r_1} \int \bar\rho^{(\gamma-1)N+1} \Big[(x\partial_x^{N} + (N+1)\partial_x^{N-1})(x\eta_{\tau\tau}) \Big]^2\,dx d\tau' \\
\le &C \Big\{ \int_0^\tau \alpha^{-1+r_1} \alpha_\tau^2\int \bar\rho^{(\gamma-1)N+1} \Big[(x\partial_x^{N} + (N+1)\partial_x^{N-1})(x\eta_{\tau}) \Big]^2\,dx d\tau'  \\
&+ \int_0^\tau \alpha^{5-6\gamma+r_1} \tilde\alpha^2 \int \bar\rho^{(\gamma-1)N+1} \Big[(x\partial_x^{N} + (N+1)\partial_x^{N-1})\big(x\eta(1+\eta)\big) \Big]^2\,dx  d\tau' \\
&+\int_0^\tau \alpha^{5-6\gamma+r_1} \tilde\alpha^2 \int\bar\rho^{(\gamma-1)N+1} \Big\{(x\partial_x^{N} + (N+1)\partial_x^{N-1}) \Big[ (\bar\rho^\gamma\mathfrak{Q})_x \frac{(1+\eta)^2}{\bar\rho}\Big] \Big\}^2\,dx d\tau' \\
&+(2\bar\mu+\bar\lambda)^2 \int_0^\tau \alpha^{5-6\gamma+r_1} \int\bar\rho^{(\gamma-1)N+1} \\
&\qquad\qquad\qquad\quad \cdot \Big\{(x\partial_x^{N} + (N+1)\partial_x^{N-1}) \Big[ (\bar\rho^\gamma\mathfrak{Q}_\tau)_x \frac{(1+\eta)^2}{\bar\rho}\Big] \Big\}^2\,dx d\tau' \\
&+\bar\mu^2 \int_0^\tau \alpha^{5-6\gamma+r_1} \int\bar\rho^{(\gamma-1)N+1} \\
&\qquad\quad\quad\quad \cdot \Big\{(x\partial_x^{N} + (N+1)\partial_x^{N-1}) \Big[ [\bar\rho^\gamma(\mathfrak{Q}+1)]_x \frac{(1+\eta)\eta_\tau}{\bar\rho}\Big] \Big\}^2\,dx  d\tau' \Big\}.
\end{align*}
Then \eqref{tran-L2}, \eqref{tran-L3}, \eqref{tran-L4}, \eqref{tran-L1} and the Hardy inequality yield that
\begin{align*}
&\int_0^\tau \alpha^{1+r_1} \int \bar\rho^{(\gamma-1)N+1}x^4(\partial_x^{N}\eta_{\tau\tau})^2\,dx d\tau'\\
\le & C  \Big\{ \int_0^\tau \alpha^{1+r_1} \int \bar\rho^{(\gamma-1)N +1} G_{N-1,\tau}^2\,dx d\tau' \\
&+ \int_0^\tau \alpha^{5-6\gamma+r_1} \tilde\alpha^2 \int \bar\rho^{(\gamma-1)(N+2)+1} G_{N+1}^2 + \bar\rho^{(\gamma-1)N +1} (G_{N}^2 + H_{N,2}^2)\,dx d\tau'  \\
&+ (2\bar\mu+\bar\lambda)^2 \int_0^\tau \alpha^{5-6\gamma+r_1} \int \bar\rho^{(\gamma-1)(N+2)+1} \big( G_{N+1,\tau}^2 +(\partial_x H_{N,1})^2 \big)\,dx d\tau'\\
&+ (2\bar\mu+\bar\lambda)^2 \int_0^\tau \alpha^{5-6\gamma+r_1} \int \bar\rho^{(\gamma-1)N +1} (G_{N,\tau}^2 +H_{N,1}^2 + H_{N,3}^2) \,dx d\tau'\\
&+ \bar\mu^2 \epsilon\int_0^\tau \alpha^{5-6\gamma+r_1} \int \bar\rho^{(\gamma-1)(N+2)+1} G_{N+1}^2 + \bar\rho^{(\gamma-1)N +1} (G_{N}^2 + H_{N,4}^2) \,dx d\tau'\\
&+ \int_0^\tau \alpha^{5-6\gamma+r_1} \tilde\alpha^2 \int \bar\rho^{(\gamma-1)N+1} \Big[(x\partial_x^{N} + (N+1)\partial_x^{N-1})\big(x\eta(1+\eta)\big) \Big]^2\,dx  d\tau' \Big\}  \\
\le &  C \Big\{  \mathcal{E}_{N}^{(1)} + [1+ (2\bar\mu+\bar\lambda)^2] \int_0^\tau\alpha^{3-3\gamma} \mathcal{E}_{N+1} \,d\tau'   + (2\bar\mu+\bar\lambda)^2  \mathcal{D}_{N+1} \Big\} \\
\le & C_{r_1}  [1+ (2\bar\mu+\bar\lambda)^2] \mathcal{E}_{in}
\end{align*}
for any fixed $r_1< 1, r_1 \le 3\gamma-4$ and any $0 \le N \le N_0-1$. This proves \eqref{ineq-etatt1}.

Similarly, it holds that
\begin{align*}
& \alpha^{1+r_1} \int \bar\rho^{(\gamma-1)N+1}x^4(\partial_x^{N}\eta_{\tau\tau})^2\,dx\\
\le & C  \Big\{ \alpha^{1+r_1} \int \bar\rho^{(\gamma-1)N +1} G_{N-1,\tau}^2\,dx  \\
&+\alpha^{5-6\gamma+r_1} \tilde\alpha^2 \int \bar\rho^{(\gamma-1)(N+2)+1} G_{N+1}^2 + \bar\rho^{(\gamma-1)N +1} (G_{N}^2 + H_{N,2}^2)\,dx   \\
&+ (2\bar\mu+\bar\lambda)^2 \alpha^{5-6\gamma+r_1} \int \bar\rho^{(\gamma-1)(N+2)+1} \big( G_{N+1,\tau}^2 +(\partial_x H_{N,1})^2 \big)\,dx \\
&+ (2\bar\mu+\bar\lambda)^2  \alpha^{5-6\gamma+r_1} \int \bar\rho^{(\gamma-1)N +1} (G_{N,\tau}^2 + H_{N,1}^2 + H_{N,3}^2) \,dx \\
&+ \bar\mu^2 \epsilon\alpha^{5-6\gamma+r_1} \int \bar\rho^{(\gamma-1)(N+2)+1} G_{N+1}^2 + \bar\rho^{(\gamma-1)N +1} (G_{N}^2 + H_{N,4}^2) \,dx '\\
&+  \alpha^{5-6\gamma+r_1} \tilde\alpha^2 \int \bar\rho^{(\gamma-1)N+1} \Big[(x\partial_x^{N} + (N+1)\partial_x^{N-1})\big(x\eta(1+\eta)\big) \Big]^2\,dx  \Big\}  \\
\le &  C [1+(2\bar\mu+\bar\lambda)^2 ] \mathcal{E}_{N+2} \le C [1+ (2\bar\mu+\bar\lambda)^2] \mathcal{E}_{in}
\end{align*}
for any fixed $r_1 \le \min\{3\gamma-4,1\}$ and any $1 \le N \le N_0-2$. This proves \eqref{ineq-etatt2}. 

Moreover, Bochner's inequality yields that for any  $1 \le N \le N_0-1$,
\begin{align*}
&\int\bar\rho^{(\gamma-1)(N+1)+1} x^4 (\partial_x^{N+1}\eta)^2\,dx\\
=&\int \bar\rho^{(\gamma-1)(N+1)+1} x^4 \Big(\partial_x^{N+1}\eta_0+\int_0^\tau \partial_x^{N+1}\eta_\tau\,d\tau'\Big)^2\,dx \\
\le & C\int \bar\rho^{(\gamma-1)(N+1)+1} x^4 (\partial_x^{N+1}\eta_0 )^2\,dx + C\Big(\int_0^\tau \norm{\bar\rho^{(\gamma-1)\frac{(N+1)}{2}+\frac{1}{2}}x^2 \partial_x^{N+1} \eta_\tau}{L_2} \,d\tau'\Big)^2,
\end{align*}
where $\eqref{ineq-total}_1$ together with the Hardy inequality yields that 
\begin{align*}
&\int\bar\rho^{(\gamma-1)(N+1)+1} x^4 (\partial_x^{N+1}\eta_\tau)^2\,dx\\
\le& C\alpha^{-\min\{3\gamma-3,1\}} \mathcal{E}_{N_0} ~(\text{with $r_1=\min\{3\gamma-3,1\}-1$ in $\mathcal{E}_{N_0}$})\\
\le & C\alpha^{-\min\{3\gamma-3,1\}} \mathcal{E}_{in}.
\end{align*}
This along with the rapid growth of $\alpha$ in \eqref{ineq-alpha1} leads to
\begin{equation*}
 \int\bar\rho^{(\gamma-1)(N+1)+1} x^4 (\partial_x^{N+1}\eta)^2\,dx \le  C  \mathcal{E}_{in} + C \Big( \int_0^\tau \alpha^{-\min\{3\gamma-3,1\}/2}\,d\tau' \Big)^2 \cdot\mathcal{E}_{in} \le C  \mathcal{E}_{in}.
\end{equation*}
And thereby for any $r_2<0$, we have
\begin{align*}
    &\int _0^T \alpha^{2r_2}\int\bar\rho^{(\gamma-1)(N+1)+1} x^4 (\partial_x^{N+1}\eta)^2\,dx d\tau\\
    \le & C\int _0^T \alpha^{2r_2}\,d\tau  \mathcal{E}_{in} \le C\mathcal{E}_{in},
\end{align*}
This implies \eqref{ineq-eta1}. 
\eqref{ineq-H2} just follows from the Hardy inequality for $N_0 \geq 4 + \lceil \frac{1}{\gamma-1} \rceil$. 
\end{proof}

Now we are ready to verify the a priori assumptions \eqref{def-apriori-assumx}, \eqref{def-apriori-assumt} and \eqref{def-apriori-assum1}. It then follows from \lemref{lem-Hardy} and \lemref{lem-embedding} that for $\gamma>1$ and $2 \le k \le N_0-3$, 
\begin{align} \label{ineq-assumex}
&\begin{aligned}
    &\sup_{0 \le \tau \le T} \norm{ \bar\rho^{( \gamma - 1 )( k - \frac{3}{2})} \partial_x^{k} \eta }{\Lnorm{\infty}}^2 \\
   \le & C \sup_{0 \le \tau \le T} \int \bar\rho^{ ( \gamma - 1 )(2 k - 2)} (\partial_x^{k+1} \eta)^2 + \bar\rho^{ ( \gamma - 1 )(2 k - 4)} (\partial_x^{k} \eta)^2 \,dx\\ 
\le & C \sup_{0 \le \tau \le T}  \int \bar\rho^{ ( \gamma - 1 )(2 k - 4+2(N_0-k))} x^4 \sum_{i=k}^{N_0}  (\partial_x^{i} \eta)^2 \,dx \\
\le & C \sup_{0 \le \tau \le T} \int \bar\rho^{( \gamma - 1 )N_0+1} x^4 \sum_{i=0}^{N_0} (\partial_x^{i} \eta)^2 \,dx \\
\le & C \sum_{i=0}^{N_0} \Big[ \sup_{0 \le \tau \le T} \int \bar\rho^{( \gamma - 1 )N_0+1} x^4  (\partial_x^{i} \eta_0)^2 \,dx + \Big(\int_0^T \norm{\bar\rho^{(\gamma-1)\frac{N_0}{2}+\frac{1}{2}}x^2 \partial_x^{i} \eta_\tau}{L_2} \,d\tau'\Big)^2 \Big]\\
\le & C\mathcal{E}_{in} + C\Big(\int_0^T \alpha^{-\min\{3\gamma-3,2\}/2}\mathcal{E}_{N_0}^{1/2}\,d\tau'\Big)^2~(\text{with $r_1=\min\{3\gamma-4,1\}$ in $\mathcal{E}_{N_0}$ } )\\
\le & C\mathcal{E}_{in},
\end{aligned}\\
\intertext{where
$N_0 \geq 4 + \lceil \frac{1}{\gamma-1} \rceil$ are applied in 3rd inequality. And in the same manner,  }
&\begin{aligned} \label{ineq-assumet}
    &\sup_{0 \le \tau \le T} \norm{ \bar\rho^{( \gamma - 1 )( k - \frac{3}{2})} \partial_x^{k} \eta_\tau }{\Lnorm{\infty}}^2 \\
\le & C \sup_{0 \le \tau \le T} \int \bar\rho^{( \gamma - 1 )N_0+1} x^4 \sum_{i=1}^{N_0} (\partial_x^{i} \eta_\tau)^2 \,dx \le   C\mathcal{E}_{N_0}(T)~(\text{with $ r_1=-1$ in $\mathcal{E}_{N_0}$ } )\\
\le &C\mathcal{E}_{in},
\end{aligned}\\
\intertext{and}
&\begin{aligned} \label{ineq-assume1}
    &\sup_{0 \leq \tau \leq T} \alpha^{\min\{\frac{3\gamma-3}{2},1 \}} \norm{ (\eta_\tau,\eta_{x\tau}) }{\Lnorm{\infty}}^2 \\
\le & C \sup_{0 \le \tau \le T} \alpha^{\min\{\frac{3\gamma-3}{2},1 \}} \int \bar\rho^{( \gamma - 1 )N_0+1} x^4 \sum_{i=1}^{N_0} (\partial_x^{i} \eta_\tau)^2 \,dx \\
\le &   C\mathcal{E}_{N_0}(T)~(\text{with $ r_1=\min\{3\gamma-4,1\}$ in $\mathcal{E}_{N_0}$ } ) \le C\mathcal{E}_{in}.
\end{aligned}
\end{align}   
A similar argument together with the standard Sobolev embedding $H^1(I) \hookrightarrow L^\infty(I) $ yields 
\begin{equation*}
\begin{aligned}
   &\sup_{0 \le \tau \le T} \norm{ (\eta, \eta_x) }{\Lnorm{\infty}}^2 + \sup_{0 \le \tau \le T} \norm{ \bar\rho^{( \gamma - 1 )(N_0-\frac{7}{2})} x \partial_x^{N_0-2} \eta }{\Lnorm{\infty}}^2  \le C\mathcal{E}_{in},\\
   &\sup_{0 \le \tau \le T} \norm{ (\eta_\tau, \eta_{x\tau}) }{\Lnorm{\infty}}^2 + \sup_{0 \le \tau \le T} \norm{ \bar\rho^{( \gamma - 1 )(N_0-\frac{7}{2})} x \partial_x^{N_0-2} \eta_\tau }{\Lnorm{\infty}}^2  \le C\mathcal{E}_{in}.
\end{aligned}
\end{equation*}  
This verifies \eqref{def-apriori-assumx}, \eqref{def-apriori-assumt}, and  \eqref{def-apriori-assum1} for small enough $\mathcal{E}_{in}$. 

Based on the energy estimates established above, the local well-posedness of strong solutions to equation \eqref{eq-Lag} can be established by adapting the framework of Li-Wang-Xin \cite{Li.W.X2025}, at least for sufficiently small initial energy $\mathcal{E}_{in}$. More precisely, one can construct an iterative sequence ${\eta^{(n)}}$ by successively solving a linearized degenerate parabolic problem using a refined Galerkin method analogous to that introduced in \cite{Li.W.X2025}. To this end, one considers a weighted $H^1$ space with a weight function capturing the degeneracy with an associated degenerate-singular elliptic operator. Such an operator yields a set of eigenfunctions that form an orthogonal basis of the weighted space.
Using this basis, each $\eta^{(n)}$ can be solved and shown to satisfy uniform energy estimates analogous to those in \propref{prop-basic} and \propref{prop-high}. It should be noted that, thanks to the reflection invariance of the equation and the even symmetry of the initial data (see \rmkref{rmk-trace}), every $\eta^{(n)}$ preserves the even symmetry, thereby all boundary terms arising from integration by parts at the origin vanish. These uniform estimates imply that the sequence $\{\eta^{(n)}\}$ is compact in an appropriate weighted energy space. Extracting a convergent subsequence yields a weak solution of the original equation \eqref{eq-Lag}. By further applying weighted interpolation inequalities, the convergence can be improved, finally yielding a local strong solution with the desired regularity.

Then the global existence of strong solutions can be concluded through a standard continuation argument from \eqref{ineq-total}, \eqref{ineq-etatt1}, and \eqref{ineq-etatt2}. 
Therefore, the proof \autoref{thm-stab} is completed.

\subsection{The inviscid limit} 

Noting that the energy estimates \eqref{ineq-energy1} are uniform with respect to bounded $\bar\mu$ and $2\bar\mu+3\bar\lambda$ (see \rmkref{rmk-visc2}), the inviscid limit, \eqref{conv}, then can be easily verified.
Now to prove \autoref{thm-vanish}, it remains to derive the convergence rate \eqref{ineq-conv1}.

Let $\eta^{\bar\mu,\bar\lambda}$ be the strong solution to the problem \eqref{eq-Lag} with initial data \eqref{data-vanish} and $\eta^0$ the solution to the same problem with $\bar\mu=2\bar\mu + 3\bar\lambda=0$ and with the same initial data. 
Based on \lemref{lem-check-alpha}, by setting 
\begin{equation}
    \label{def-xi}
    \xi:=\eta^{\bar\mu,\bar\lambda}-\eta^0,
\end{equation} 
we conclude the following estimates of $\xi$ which shows the convergence rates with respect to the viscosity coefficients.
\begin{lemma} \label{lem-conv-rate}
For $0 \le N \le N_0-2$, any time $T$ and any $r_1 \le \min\{3\gamma-4,1 \}$, the following estimates hold:
     \begin{equation} \label{ineq-conv-rate1}
     \begin{aligned}
        \sup_{0\le \tau\le T} (\alpha^0)^{4-3\gamma+r_1}\int \bar\rho^{(\gamma-1)N+1} x^4(\partial_x^{N+1}\xi)^2\,dx \le& C_T (2\bar\mu+\bar\lambda)^2 \mathcal{E}_{in},\\
         \sup_{0\le \tau\le T} (\alpha^0)^{1+r_1}\int \bar\rho^{(\gamma-1)N+1} x^4(\partial_x^N\xi_{\tau})^2\,dx \le & C_T(2\bar\mu+\bar\lambda)^2 \mathcal{E}_{in},\\
         \sup_{0\le \tau\le T} (\alpha^0)^{4-3\gamma+r_1}\norm{\xi}{H^1}^2 \le& C_T (2\bar\mu+\bar\lambda)^2 \mathcal{E}_{in},\\
         \sup_{0\le \tau\le T} (\alpha^0)^{1+r_1}\norm{\xi_\tau}{H^1}^2 \le &C_T (2\bar\mu+\bar\lambda)^2 \mathcal{E}_{in},\\
        \end{aligned}
     \end{equation}
\end{lemma}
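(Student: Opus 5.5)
We derive an equation for $\xi$ by subtracting the equation \eqref{eq-Lag-0} for $\eta^0$ (with $\alpha=\alpha^0$, $\tilde\alpha=\alpha^0$, no viscous terms) from the one for $\eta^{\bar\mu,\bar\lambda}$ (with $\alpha=\alpha^{\bar\mu,\bar\lambda}$). The equation is normalized by the inviscid background, so every coefficient is written as its $\alpha^0$-version plus a difference. Schematically,
\begin{equation*}
\alpha^0 x\xi_{\tau\tau}+\alpha^0_\tau x\xi_\tau-c_1(\alpha^0)^{4-3\gamma}(\cdots)x\xi+A(\alpha^0)^{4-3\gamma}\frac{(1+\eta^0)^2}{\bar\rho}\big(\bar\rho^\gamma(\Qfrak^{\bar\mu,\bar\lambda}-\Qfrak^0)\big)_x=\mathcal{F},
\end{equation*}
where the forcing $\mathcal{F}$ collects three contributions. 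The first is the background mismatch terms $(\alpha^{\bar\mu,\bar\lambda}-\alpha^0)x\eta^{\bar\mu,\bar\lambda}_{\tau\tau}$, $(\alpha^{\bar\mu,\bar\lambda}_\tau-\alpha^0_\tau)x\eta^{\bar\mu,\bar\lambda}_\tau$, and $[(\alpha^{\bar\mu,\bar\lambda})^{3-3\gamma}\tilde\alpha^{\bar\mu,\bar\lambda}-(\alpha^0)^{4-3\gamma}](\cdots)$. The second is the difference of the $(1+\eta)^2$ prefactors, multiplied by the pressure of $\eta^{\bar\mu,\bar\lambda}$. The third is the full viscous right-hand side of \eqref{eq-Lag-0} evaluated at $\eta^{\bar\mu,\bar\lambda}$.

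By \lemref{lem-check-alpha}, on $[0,T]$ each background mismatch coefficient is $O(C_T(2\bar\mu+3\bar\lambda))$ relative to its $\alpha^0$-counterpart. The viscous terms carry an explicit factor $2\bar\mu+\bar\lambda$ or $\bar\mu$. The factor $2\bar\mu+3\bar\lambda$ is bounded by $3(2\bar\mu+\bar\lambda)$, since $2\bar\mu+\bar\lambda-\frac{2\bar\mu+3\bar\lambda}{3}=\frac43\bar\mu\ge0$. Similarly $\bar\mu\le\frac34(2\bar\mu+\bar\lambda)$. Hence $\mathcal{F}=(2\bar\mu+\bar\lambda)\cdot(\text{quantities bounded by the a priori energies of }\eta^{\bar\mu,\bar\lambda})$. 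To close, the forcing must be controlled at one more derivative than the $\xi$-energy. This is why the lemma is stated only for $N\le N_0-2$. At that range we can invoke \eqref{ineq-total}, the $\eta_{\tau\tau}$ bounds \eqref{ineq-etatt2}, and $\mathcal{D}_{N_0}$ for $\eta^{\bar\mu,\bar\lambda}$. These bound $\mathcal{F}$ and its $N+1$ weighted derivatives by $C_T(2\bar\mu+\bar\lambda)\mathcal{E}_{in}^{1/2}$ in the appropriate norm, pointwise in $\tau$ or in $L^2_\tau$.

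Next we linearize the pressure difference. Write $\Qfrak^{\bar\mu,\bar\lambda}-\Qfrak^0=-\gamma(1+\eta^0)^{-2\gamma}(1+\eta^0+x\eta^0_x)^{-\gamma-1}(x\xi_x+3\xi)+\epsilon\,O(|\xi|+|x\xi_x|)$, by the mean value theorem and the smallness of both solutions. We then repeat the higher-order scheme of \propref{prop-high} on $\xi$ with no viscous dissipation on the left. Apply $x\partial_x^{N}+(N+1)\partial_x^{N-1}$ and multiply by $(\alpha^0)^{r_1}\bar\rho^{(\gamma-1)N+1}[x\partial_x^N+(N+1)\partial_x^{N-1}](x\xi_\tau)$. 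For $N=0$, use instead the basic multiplier $(\alpha^0)^{r_1}(1+\eta^0)^2x^3\xi_\tau$ of \propref{prop-basic}. Using \eqref{tran-L2} and the identities for $G_N$, this produces
\begin{equation*}
\frac{d}{d\tau}\Big[(\alpha^0)^{1+r_1}\!\int\bar\rho^{(\gamma-1)N+1}G_{N-1,\tau}(\xi)^2+(\alpha^0)^{4-3\gamma+r_1}\!\int\bar\rho^{(\gamma-1)(N+1)+1}G_N(\xi)^2\Big]\le C\,(\text{l.o.t.})+\Big|\int(\alpha^0)^{r_1}\bar\rho^{(\gamma-1)N+1}G_{N-1,\tau}(\xi)\,[\cdots]\mathcal{F}\Big|.
\end{equation*}
The lower-order terms are handled exactly as $L_1,L_{22}$ via \lemref{lem-Jacobx}, \lemref{lem-Px} and the Hardy inequality, now with coefficients involving $\eta^0$ and $\eta^{\bar\mu,\bar\lambda}$.

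The forcing term is estimated by Cauchy--Schwarz against the kinetic part, giving $C\,\mathcal{E}_\xi+C_T(2\bar\mu+\bar\lambda)^2\mathcal{E}_{in}$. Gr\"onwall on $[0,T]$ uses $\xi(0)=\xi_\tau(0)=0$, and the $T$-dependence is absorbed into $C_T$. Here $\mathcal{E}_\xi$ denotes the sum over $j\le N$ of the bracketed quantities. \lemref{lem-GN} then converts $G_N$ into individual control of $x^4(\partial_x^{N+1}\xi)^2$ and $x^2(\partial_x^N\xi)^2$, with the lower-order remainder absorbed by induction on $N$. This yields the first two lines of \eqref{ineq-conv-rate1} (with the weight $\bar\rho^{(\gamma-1)N+1}$ obtained by dropping powers of $\bar\rho$). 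The $H^1$ bounds follow from the Hardy inequality, as in \eqref{ineq-H2}, once $N_0-2\ge 2+\lceil\frac1{\gamma-1}\rceil$.

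The main obstacle is the viscous forcing. It contains $(\bar\rho^\gamma\Qfrak^{\bar\mu,\bar\lambda}_\tau)_x$, which after applying the $N$-th order operator involves $G_{N+1,\tau}(\eta^{\bar\mu,\bar\lambda})$, i.e.\ $N+2$ derivatives of $\eta_\tau$. There are two ways to handle it. The first is to integrate by parts onto the test function, as in $L_3$. This gives $\int(2\bar\mu+\bar\lambda)(\alpha^0)^{3-3\gamma+r_1}\bar\rho^{(\gamma-1)(N+1)+1}G_{N,\tau}(\eta^{\bar\mu,\bar\lambda})G_{N,\tau}(\xi)$. But $G_{N,\tau}(\xi)$ is not in the $\xi$-energy, because the inviscid $\xi$-equation has no dissipation. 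I therefore prefer the second way: do not integrate by parts, and bound the term directly by $(2\bar\mu+\bar\lambda)\big(\mathcal{D}_{N+1}\text{-density}\big)^{1/2}$ times the kinetic energy of $\xi$. This is admissible precisely because $N+1\le N_0-1$, and $\int_0^T$ of the dissipation density is bounded by $\mathcal{E}_{in}/(2\bar\mu+\bar\lambda)$. Using only this would lose half a power of $2\bar\mu+\bar\lambda$, so I would instead use the pointwise-in-time bound. That bound comes from the equation itself, which expresses the viscous term through $\eta_{\tau\tau}$, $\eta_\tau$ and the pressure, as in the proof of \eqref{ineq-etatt2}. Concretely, $(2\bar\mu+\bar\lambda)\|\text{viscous term}\|\le C(2\bar\mu+\bar\lambda)\sqrt{\mathcal{E}_{N+2}}$ via $G_{N+1,\tau}$ bounds from $\mathcal{E}_{N_0}$ with $N+2\le N_0$. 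Either way the factor $2\bar\mu+\bar\lambda$ is retained linearly.
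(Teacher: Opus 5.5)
Your scheme is the paper's. You subtract the two equations, apply $x\partial_x^N+(N+1)\partial_x^{N-1}$, and test with $\bar\rho^{(\gamma-1)N+1}[x\partial_x^N+(N+1)\partial_x^{N-1}](x\xi_\tau)$ under the inviscid weights $(\alpha^0)^{1+r_1}$ and $(\alpha^0)^{4-3\gamma+r_1}$. You bound the viscous term of $\eta^{\bar\mu,\bar\lambda}$ by Cauchy--Schwarz against the kinetic part rather than integrating by parts, since $\xi$ has no dissipation. You then close by Gr\"onwall from zero data and unpack $G_N(\xi)$ with Lemma~\ref{lem-GN} and Hardy.

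The only structural difference is normalization. The paper divides each equation by its own $\alpha$ before subtracting, so it never meets $(\alpha^{\bar\mu,\bar\lambda}-\alpha^0)x\eta^{\bar\mu,\bar\lambda}_{\tau\tau}$ and does not need \eqref{ineq-etatt2}. Your normalization also works.

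Your treatment of the viscous term is right, and it makes explicit a step the paper leaves tacit. The paper's final inequality contains $(2\bar\mu+\bar\lambda)^2\mathcal{D}^{\bar\mu,\bar\lambda}_{N+1}$, and \eqref{ineq-total} alone only gives $\mathcal{D}_{N+1}\le C\mathcal{E}_{in}/(2\bar\mu+\bar\lambda)$. The squared rate needs $\mathcal{D}_{N+1}\le C\sup_\tau\mathcal{E}_{N+2}\le C\mathcal{E}_{in}$. This holds because the level-$(N+1)$ dissipation density is $\alpha^{2-3\gamma}$ times a Hardy-bounded multiple of the kinetic part of $\mathcal{E}_{N+2}$, and $N+2\le N_0$. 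You do not need to go through the equation or $\eta_{\tau\tau}$ for this.

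One step is wrong as written. The second contribution you place in $\mathcal{F}$ is $[(1+\eta^{\bar\mu,\bar\lambda})^2-(1+\eta^0)^2]\,A(\alpha^0)^{4-3\gamma}\bar\rho^{-1}(\bar\rho^\gamma\Qfrak^{\bar\mu,\bar\lambda})_x$. This equals $(2+\eta^{\bar\mu,\bar\lambda}+\eta^0)\,\xi$ times the pressure term of $\eta^{\bar\mu,\bar\lambda}$. It carries no viscosity factor, so ``$\mathcal{F}=(2\bar\mu+\bar\lambda)\cdot(\dots)$'' is false, and this piece cannot be bounded by $C_T(2\bar\mu+\bar\lambda)\mathcal{E}_{in}^{1/2}$.

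It must go on the Gr\"onwall side. So must the terms where derivatives fall on the $\eta^0$-dependent coefficient and on the remainder in your linearization of $\Qfrak^{\bar\mu,\bar\lambda}-\Qfrak^0$. These terms are not handled ``exactly as $L_1$, $L_{22}$''. Lemmas~\ref{lem-Jacobx} and \ref{lem-Px} concern a single function. Here the $N$-th order operator produces products $\partial_x^k\xi\cdot\partial_x^{N+2-k}\eta$ with $\eta\in\{\eta^0,\eta^{\bar\mu,\bar\lambda}\}$. The worst is $\bar\rho^{\gamma-1}x^2\,\xi\,\partial_x^{N+2}\eta$, where the background carries one more derivative than $\xi$ has in its energy.

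These are exactly the paper's $\check H_{N,5}$ and the last line of its bound on $\check H_{N,2}$. The paper puts the background factor in weighted $L^\infty$ and $\xi$ in weighted $L^2$ via Hardy. Specifically, it uses $\sup_\tau(\alpha^0)^{\min\{5-3\gamma,0\}}\|\bar\rho^{(\gamma-1)N}G^0_{N+1}\|_{L^\infty}^2\le C\mathcal{E}_{in}$ and smallness of $\|\bar\rho^{(\gamma-1)(N_0-\frac52)}x^2\partial_x^{N_0-1}\eta\|_{L^\infty}$. Both come from Lemma~\ref{lem-embedding} and require $G_{N+2}$ of the backgrounds in $L^2$. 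This is a second reason for the restriction $N\le N_0-2$, besides the viscous forcing.

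Finally, ``dropping powers of $\bar\rho$'' goes the wrong way. Since $\bar\rho^{(\gamma-1)(N+1)+1}\le C\bar\rho^{(\gamma-1)N+1}$, your $G_N$-energy, like the paper's $\check{\mathcal{E}}_N$, controls $x^2\partial_x^{N+1}\xi$ only with the heavier weight $\bar\rho^{(\gamma-1)(N+1)+1}$. The lighter weight in the first line of the statement follows from level $N+1$ by the boundary Hardy inequality when $N\le N_0-3$. At $N=N_0-2$ only the heavier weight is available, in the paper's proof as well.
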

\begin{proof}
Denote $\mathcal{E}_N^{\bar\mu,\bar\lambda},\mathcal{E}_N^{(1),\bar\mu,\bar\lambda},\mathcal{E}_N^{(2),\bar\mu,\bar\lambda},\mathcal{D}_N^{\bar\mu,\bar\lambda}$ (resp. $\mathcal{E}_N^0,\mathcal{E}_N^{(1),0},\mathcal{E}_N^{(2),0},\mathcal{D}_N^0$) the energy and dissipation functionals defined in \eqref{def-energy} with $\eta, \alpha$ replaced by $\eta^{\bar\mu,\bar\lambda},\alpha^{\bar\mu,\bar\lambda}$(resp. $\eta^0, \alpha^0$). Denote $G_N^{\bar\mu,\bar\lambda}$ (resp. $G_N^0; \check G_N$) the function $G_N$ with $\eta$ replaced by $\eta^{\bar\mu,\bar\lambda}$ (resp. $\eta^0; \xi$).
Define
\begin{equation*} 
\begin{aligned}
		\mathcal{\check E}_N(\tau):=\sum_{j=0}^N & \Bigl( (\alpha^0)^{1+r_1} \int \bar\rho^{(\gamma-1)j+1} x^4(\partial_x^j\xi_{\tau})^2\,dx\\
		&+(\alpha^0)^{4-3\gamma+r_1}\int\bar\rho^{(\gamma-1)(j+1)+1} \bigl( x^4 (\partial_x^{j+1}\xi)^2 + x^2(\partial_x^j\xi)^2 \bigr)\,dx \Bigr).
	\end{aligned}
\end{equation*}
Then with the definition, \eqref{def-xi}, $\xi$ satisfies
\begin{equation} \label{eq-xi}
\begin{aligned}
&x \xi_{\tau\tau} +  \frac{\alpha^0_\tau}{\alpha^0} x\xi_\tau - c_1(\alpha^0)^{3-3\gamma}  (1+\eta^0+\eta^{\bar\mu,\bar\lambda}) x\xi \\
&+  A(\alpha^0)^{3-3\gamma} \Big[\frac{(1+\eta^{\bar\mu,\bar\lambda})^2}{\bar\rho}(\bar\rho^\gamma\mathfrak{Q}^{\bar\mu,\bar\lambda} )_x  - \frac{(1+\eta^0)^2}{\bar\rho} (\bar\rho^\gamma\mathfrak{Q}^0)_x  \Big]\\
 = & -\frac{2\bar\mu+\bar\lambda}{\gamma} (\alpha^{\bar\mu,\bar\lambda})^{2-3\gamma} \frac{(1+\eta^{\bar\mu,\bar\lambda})^2}{\bar\rho} (\bar\rho^\gamma\mathfrak{Q^{\bar\mu,\bar\lambda}_\tau})_x \\
 &- 4\bar\mu (\alpha^{\bar\mu,\bar\lambda})^{2-3\gamma} \dfrac{(1+\eta^{\bar\mu,\bar\lambda})\eta_\tau^{\bar\mu,\bar\lambda}}{\bar\rho} [\bar\rho^\gamma ( \mathfrak{Q}^{\bar\mu,\bar\lambda} + 1 ) ]_x \\
 & - \Big(\frac{\alpha_\tau^{\bar\mu,\bar\lambda}}{\alpha^{\bar\mu,\bar\lambda}}-\frac{\alpha_\tau^0}{\alpha^0} \Big) x\eta^{\bar\mu,\bar\lambda}_\tau + c_1\big[(\alpha^{\bar\mu,\bar\lambda})^{2-3\gamma} \tilde\alpha^{\bar\mu,\bar\lambda} -(\alpha^0)^{3-3\gamma}\big](1+\eta^{\bar\mu,\bar\lambda}) x\eta^{\bar\mu,\bar\lambda} \\
 &-A \big[(\alpha^{\bar\mu,\bar\lambda})^{2-3\gamma} \tilde\alpha^{\bar\mu,\bar\lambda} -(\alpha^0)^{3-3\gamma}\big] \frac{(1+\eta^{\bar\mu,\bar\lambda})^2}{\bar\rho} (\bar\rho^\gamma\mathfrak{Q}^{\bar\mu,\bar\lambda})_x . 
\end{aligned}
\end{equation}    
with the initial data 
\begin{equation} \label{data-xi}
\begin{aligned}
&\bar\rho^{(\gamma-1)\frac{N+1}{2}+\frac{1}{2}}x^2\partial_x^{N+1}\xi(x,0)=\bar\rho^{(\gamma-1)\frac{N+1}{2}+\frac{1}{2}}x\partial_x^{N}\xi(x,0)=0,\\
&\bar\rho^{(\gamma-1)\frac{N}{2}+\frac{1}{2}}x^2\partial_x^{N}\xi_\tau(x,0)=0,    
\end{aligned}
\end{equation}
for $0 \le N \le N_0-2$. Since \eqref{def-apriori-assumx}, \eqref{def-apriori-assumt} and \eqref{def-apriori-assum1} hold for both $\eta^{\bar\mu,\bar\lambda}$ and $\eta^0$, then for any $ \tau \in (0,T) $, 
\begin{align}
	\label{ineq-xi-infx}
    &\begin{aligned}
    \sup_{0 \leq \tau \leq T}  \norm{ (\xi,\xi_x) }{\Lnorm{\infty}}^2 &+ \sup_{0 \leq \tau \leq T}\sum_{k=2}^{N_0-3} \norm{  \bar\rho^{( \gamma - 1 )( k - \frac{3}{2})} \partial_x^{k} \xi }{\Lnorm{\infty}}^2 \\
    &+ \sup_{0 \leq \tau \leq T} \norm{  \bar\rho^{( \gamma - 1 )(N_0 - \frac{7}{2})} x\partial_x^{N_0-2} \xi }{\Lnorm{\infty}}^2  < C\epsilon,
    \end{aligned}\\
&\begin{aligned} \label{ineq-xi-inft}
    \sup_{0 \leq \tau \leq T}  \norm{ (\xi_\tau,\xi_{x\tau}) }{\Lnorm{\infty}}^2 &+ \sup_{0 \leq \tau \leq T}\sum_{k=2}^{N_0-3} \norm{  \bar\rho^{( \gamma - 1 )( k - \frac{3}{2})} \partial_x^{k} \xi_\tau }{\Lnorm{\infty}}^2 \\
    &+ \sup_{0 \leq \tau \leq T} \norm{  \bar\rho^{( \gamma - 1 )(N_0-\frac{7}{2})} x\partial_x^{N_0-2} \xi_\tau }{\Lnorm{\infty}}^2  < C\epsilon,
\end{aligned}\\
\intertext{and}
&\begin{aligned} \label{ineq-xi-inf1}
    \sup_{0 \leq \tau \leq T} \alpha^{\min\{ (3\gamma-3)/2, 1\}} \norm{ (\xi_\tau,\xi_{x\tau}) }{\Lnorm{\infty}}   < \epsilon.
\end{aligned}
\end{align}
Similarly, we apply the operator $x\partial_x^N+(N+1)\partial_x^{N-1}$ for $0 \le N \le N_0-2$ to \eqref{eq-xi} and get
\begin{equation} \label{eq-xi-high}
\begin{aligned}
& [x\partial_x^{N} + (N+1) \partial_x^{N-1}] (x\xi_{\tau\tau})+ \frac{\alpha^0_\tau}{\alpha^0}[x\partial_x^{N} + (N+1) \partial_x^{N-1}] (x\xi_{\tau})\\ &
- c_1(\alpha^0)^{3-3\gamma}    [x\partial_x^{N} + (N+1) \partial_x^{N-1}] \Big( (1+\eta^0+\eta^{\bar\mu,\bar\lambda}) x \xi \Big)\\ &
+ A(\alpha^0)^{3-3\gamma}  [x\partial_x^{N} + (N+1) \partial_x^{N-1}] \Bigg\{ \frac{(1+\eta^{\bar\mu,\bar\lambda})^2}{\bar\rho}(\bar\rho^\gamma\mathfrak{Q}^{\bar\mu,\bar\lambda})_x \\
& \qquad- \frac{(1+\eta^0)^2}{\bar\rho}(\bar\rho^\gamma\mathfrak{Q}^0)_x  \Bigg\}\\ 
 = & -\frac{2\bar\mu+\bar\lambda}{\gamma} (\alpha^{\bar\mu,\bar\lambda})^{2-3\gamma} [x\partial_x^{N} + (N+1) \partial_x^{N-1}]  \left\{ \frac{(1+\eta^{\bar\mu,\bar\lambda})^2}{\bar\rho} ( \bar\rho^\gamma \mathfrak{Q}_\tau^{\bar\mu,\bar\lambda} )_x  \right\}\\&
 - 4\bar\mu  (\alpha^{\bar\mu,\bar\lambda})^{2-3\gamma} [x\partial_x^{N} + (N+1) \partial_x^{N-1}] \left\{ \frac{(1+\eta^{\bar\mu,\bar\lambda})\eta_\tau^{\bar\mu,\bar\lambda}}{\bar\rho }[\bar\rho^\gamma ( \mathfrak{Q}^{\bar\mu,\bar\lambda} + 1 ) ]_x  \right\}\\
  & - \Big(\frac{\alpha_\tau^{\bar\mu,\bar\lambda}}{\alpha^{\bar\mu,\bar\lambda}}-\frac{\alpha_\tau^0}{\alpha^0} \Big) [x\partial_x^{N} + (N+1) \partial_x^{N-1}] (x\eta^{\bar\mu,\bar\lambda}_\tau)\\
  &+ c_1\big[(\alpha^{\bar\mu,\bar\lambda})^{2-3\gamma} \tilde\alpha^{\bar\mu,\bar\lambda} -(\alpha^0)^{3-3\gamma}\big][x\partial_x^{N} + (N+1) \partial_x^{N-1}] \Big((1+\eta^{\bar\mu,\bar\lambda}) x\eta^{\bar\mu,\bar\lambda}\Big)\\
 &-A \big[(\alpha^{\bar\mu,\bar\lambda})^{2-3\gamma} \tilde\alpha^{\bar\mu,\bar\lambda} -(\alpha^0)^{3-3\gamma}\big] [x\partial_x^{N} + (N+1) \partial_x^{N-1}] \left\{\frac{(1+\eta^{\bar\mu,\bar\lambda})^2}{\bar\rho} (\bar\rho^\gamma\mathfrak{Q}^{\bar\mu,\bar\lambda})_x \right\}. 
\end{aligned} 
\end{equation}
Multiply \eqref{eq-xi-high} by $(\alpha^0)^{1+r_1}\bar\rho^{(\gamma-1)N+1} [x\partial_x^{N} + (N+1) \partial_x^{N-1}](x\xi_\tau)$ and integrate, then we arrive at 
\begin{align*}
& \frac{d}{d \tau} \frac{1}{2}  (\alpha^0)^{1 + r_1} \int \bar\rho^{ (\gamma - 1)N + 1}  \check G_{N-1,\tau} ^2 \, dx\\ 
+ & \frac{1 - r_1}{2} (\alpha^0)^{r_1} \alpha^0_\tau \int \bar\rho^{ (\gamma - 1)N + 1 }  \check G_{N-1,\tau} ^2 \,dx\\
= & c_1 (\alpha^0)^{4 - 3\gamma + r_1}  \int \bar\rho^{ (\gamma - 1) N + 1 } \check G_{N-1,\tau} \cdot [x\partial_x^{N} + (N+1) \partial_x^{N-1}] \Big( (1+\eta^0+\eta^{\bar\mu,\bar\lambda}) x\xi \Big)\, dx\\
&-  A (\alpha^0)^{4 - 3 \gamma + r_1}  \int \bar\rho^{(\gamma - 1 ) N + 1} \check G_{N-1,\tau}  \\
&\qquad\cdot [x\partial_x^{N} + (N+1) \partial_x^{N-1}] \left\{ \frac{(1+\eta^{\bar\mu,\bar\lambda})^2}{\bar\rho}(\bar\rho^\gamma\mathfrak{Q}^{\bar\mu,\bar\lambda})_x  - \frac{(1+\eta^0)^2}{\bar\rho}(\bar\rho^\gamma\mathfrak{Q}^0)_x \right\} \, dx \\
& -  \frac{2\bar\mu + \bar\lambda }{\gamma} (\alpha^{\bar\mu,\bar\lambda})^{2 - 3 \gamma} (\alpha^0)^{1+r_1} \int \bar\rho^{(\gamma - 1) N + 1} \check G_{N-1,\tau} \\
&\qquad \cdot [x\partial_x^{N} + (N+1) \partial_x^{N-1}] \left\{ \frac{(1 + \eta^{\bar\mu,\bar\lambda})^2}{\bar\rho} (\rhobar^\gamma \Qfrak_\tau^{\bar\mu,\bar\lambda})_x   \right\} \, dx \\ &
- 4 \bar\mu (\alpha^{\bar\mu,\bar\lambda})^{2 - 3 \gamma} (\alpha^0)^{1+r_1}  \int \bar\rho^{(\gamma - 1) N + 1}\check G_{N-1,\tau} \\
& \qquad\cdot [x\partial_x^{N} + (N+1) \partial_x^{N-1}] \left\{\frac{( 1 + \eta^{\bar\mu,\bar\lambda}) \eta_\tau^{\bar\mu,\bar\lambda} }{\bar\rho } [ \rhobar^\gamma (\Qfrak^{\bar\mu,\bar\lambda}+1)]_x  \right \} \, dx \\
& - \Big(\frac{\alpha_\tau^{\bar\mu,\bar\lambda}}{\alpha^{\bar\mu,\bar\lambda}}-\frac{\alpha_\tau^0}{\alpha^0} \Big) (\alpha^0)^{1 + r_1} \int \bar\rho^{ (\gamma - 1)N + 1}  \check G_{N-1,\tau} [x\partial_x^{N} + (N+1) \partial_x^{N-1}] (x\eta^{\bar\mu,\bar\lambda}_\tau)\, dx \\
  &+ c_1\big[(\alpha^{\bar\mu,\bar\lambda})^{2-3\gamma} \tilde\alpha^{\bar\mu,\bar\lambda} -(\alpha^0)^{3-3\gamma}\big] (\alpha^0)^{1 + r_1} \int \bar\rho^{ (\gamma - 1)N + 1}  \check G_{N-1,\tau} \\
& \qquad\cdot [x\partial_x^{N} + (N+1) \partial_x^{N-1}] \Big((1+\eta^{\bar\mu,\bar\lambda}) x\eta^{\bar\mu,\bar\lambda}\Big)\, dx \\
 &-A \big[(\alpha^{\bar\mu,\bar\lambda})^{2-3\gamma} \tilde\alpha^{\bar\mu,\bar\lambda} -(\alpha^0)^{3-3\gamma}\big] (\alpha^0)^{1 + r_1} \int \bar\rho^{ (\gamma - 1)N + 1}  \check G_{N-1,\tau} \\
& \qquad\cdot [x\partial_x^{N} + (N+1) \partial_x^{N-1}] \left\{\frac{(1+\eta^{\bar\mu,\bar\lambda})^2}{\bar\rho} (\bar\rho^\gamma\mathfrak{Q}^{\bar\mu,\bar\lambda})_x \right\}\, dx \\
=:  & \check L_1 + \check L_2 + \check L_3 + \check L_4 + \check L_5 + \check L_6 + \check L_7.
\end{align*}

Likewise,
\begin{align*}
   &(x\partial_x^{N}+(N+1)\partial_x^{N-1})\left\{ (\bar\rho^\gamma\mathfrak{Q}^{\bar\mu,\bar\lambda})_x \frac{(1+\eta^{\bar\mu,\bar\lambda})^2}{\bar\rho} - (\bar\rho^\gamma\mathfrak{Q}^0)_x \frac{(1+\eta^0)^2}{\bar\rho}\right\}
	\\
	= &  \frac{1}{\rhobar^{(\gamma-1)N+1}} \partial_x \big\{ -\gamma \rhobar^{(\gamma-1)(N+1)+1} \big[ (1+\eta^{\bar\mu,\bar\lambda})^{-2\gamma+2} (1+\eta^{\bar\mu,\bar\lambda}+x\eta_x^{\bar\mu,\bar\lambda})^{-\gamma-1}  G_N^{\bar\mu,\bar\lambda}   \\
    &\qquad - (1+\eta^0)^{-2\gamma+2} (1+\eta^0+x\eta_x^0)^{-\gamma-1}  G_N^0  \big] \big\} + \check H_{N,2}\\
    =&  \frac{1}{\rhobar^{(\gamma-1)N+1}} \partial_x \big\{  -\gamma\rhobar^{(\gamma-1)(N+1)+1} (1+\eta^{\bar\mu,\bar\lambda})^{-2\gamma+2} (1+\eta^{\bar\mu,\bar\lambda}+x\eta_x^{\bar\mu,\bar\lambda})^{-\gamma-1}  \check G_N    \big\} \\
    &+ \check H_{N,5}+\check H_{N,2},
\end{align*}
where 
\begin{align*}
\check H_{N,5}= & \frac{1}{\rhobar^{(\gamma-1)N+1}} \partial_x \big\{  -\gamma\rhobar^{(\gamma-1)(N+1)+1}  \big[(1+\eta^{\bar\mu,\bar\lambda})^{-2\gamma+2} (1+\eta^{\bar\mu,\bar\lambda}+x\eta_x^{\bar\mu,\bar\lambda})^{-\gamma-1} \\
&- (1+\eta^0)^{-2\gamma+2} (1+\eta^0+x\eta_x^0)^{-\gamma-1} \big]  G_N^0 \big\},
\end{align*}
and
\begin{align*}
    \check H_{N,2}= &  \frac{1}{\rhobar^{(\gamma-1)N+1}}  \partial_x \Big\{ \bigl[\rhobar^{(\gamma-1)(N+1)+1}   (x \partial_x^{N}+N \partial_x^{N-1}) \Qfrak^{\bar\mu,\bar\lambda} \bigr] (1 + \eta^{\bar\mu,\bar\lambda})^2 \Big\} \\
			& + \frac{\gamma}{\rhobar^{(\gamma-1)N+1}}  \partial_x \Big\{  \rhobar^{(\gamma-1)(N+1)+1}  (1+\eta^{\bar\mu,\bar\lambda})^{-2\gamma+2} (1+\eta^{\bar\mu,\bar\lambda}+x\eta_x^{\bar\mu,\bar\lambda})^{-\gamma-1} G_N^{\bar\mu,\bar\lambda}   \Big\}   \\ 
             & -\frac{1}{\rhobar^{(\gamma-1)N+1}}  \partial_x \Big\{ \bigl[\rhobar^{(\gamma-1)(N+1)+1}   (x \partial_x^{N}+N \partial_x^{N-1}) \Qfrak^0 \bigr] (1 + \eta^0)^2 \Big\} \\
			& -  \frac{\gamma}{\rhobar^{(\gamma-1)N+1}}  \partial_x \Big\{  \rhobar^{(\gamma-1)(N+1)+1}  (1+\eta^0)^{-2\gamma+2} (1+\eta^0+x\eta_x^0)^{-\gamma-1} G_N^0   \Big\}   \\ 
   &
   -  C (1+\eta^{\bar\mu,\bar\lambda})^2 ( \rhobar^{\gamma-1} )_x  \partial_x^{N-1} \Qfrak^{\bar\mu,\bar\lambda} - 2 \rhobar^{\gamma-1} \eta_x^{\bar\mu,\bar\lambda} (1+\eta^{\bar\mu,\bar\lambda}) (x\partial_x^{N}+N\partial_x^{N-1}) \Qfrak^{\bar\mu,\bar\lambda}
   \\
   &
   + \partial_x \Big\{ C (1+\eta^{\bar\mu,\bar\lambda})^2 \Bigl[  ( \rhobar^{\gamma-1} )_x \partial_x^{N-2} \Qfrak^{\bar\mu,\bar\lambda} + ( \rhobar^{\gamma-1} )_{xx}  \partial_x^{N-3} \Qfrak^{\bar\mu,\bar\lambda} \\
   &\qquad+ ( \rhobar^{\gamma-1} )_{xx} x \partial_x^{N-2} \Qfrak^{\bar\mu,\bar\lambda} \Bigr] 
   \\
   &\qquad  + \bigl[ x\partial_x^{N-1} + N \partial_x^{N-2} , (1+\eta^{\bar\mu,\bar\lambda})^2 \bigr] \bigl\{ \bar\rho^{\gamma-1} \Qfrak_x^{\bar\mu,\bar\lambda} + \frac{\gamma}{\gamma-1} ( \rhobar^{\gamma-1} )_x \Qfrak^{\bar\mu,\bar\lambda} \bigr\} \Big\} \\
   &+  C (1+\eta^0)^2 ( \rhobar^{\gamma-1} )_x  \partial_x^{N-1} \Qfrak^0 + 2 \rhobar^{\gamma-1} \eta_x^0 (1+\eta^0) (x\partial_x^{N}+N\partial_x^{N-1}) \Qfrak^0
   \\
   &\
   - \partial_x \Big\{ C (1+\eta^0)^2 \Bigl[  ( \rhobar^{\gamma-1} )_x \partial_x^{N-2} \Qfrak^0 + ( \rhobar^{\gamma-1} )_{xx}  \partial_x^{N-3} \Qfrak^0 + ( \rhobar^{\gamma-1} )_{xx} x \partial_x^{N-2} \Qfrak^0 \Bigr] 
   \\
   &\qquad  + \bigl[ x\partial_x^{N-1} + N \partial_x^{N-2} , (1+\eta^0)^2 \bigr] \bigl\{ \bar\rho^{\gamma-1} \Qfrak_x^0 + \frac{\gamma}{\gamma-1} ( \rhobar^{\gamma-1} )_x \Qfrak^0 \bigr\} \Big\} 
\end{align*}
is the lower order term. Taylor's expansion yields that 
\begin{align*}
\abs{\check H_{N,2}}  
\le & C \big( x^2 \abs{\partial_x^{N}\xi} + x \abs{\partial_x^{N-1}\xi}  + \abs{\partial_x^{N-2}\xi} + \abs{\partial_x^{N-3}\xi} \big)\\
& +C \epsilon  \sum_{j=2}^{N+1} \bar\rho^{(\gamma-1)(j-N-\frac{1}{2})} ( x^2 \abs{ \partial_x^j \xi } + x \abs{ \partial_x^{j - 1} \xi } ) \\
	    & 
	    +C \epsilon  \sum_{j=2}^{N} \bar\rho^{(\gamma-1)(j-N+\frac{1}{2})} ( x \abs{ \partial_x^j \xi } +  \abs{ \partial_x^{j - 1} \xi } ) \\
        & + C (x\abs{\partial_x\xi} + \abs{\xi}) \big(\abs{x\eta_{xx}^{\bar\mu,\bar\lambda}}+\abs{x\eta_{xx}^0}+\abs{\eta_{x}^{\bar\mu,\bar\lambda}}+\abs{\eta_{x}^0} \big) \\
        & \qquad\cdot \big(x^2\abs{\partial_x^{N+1}\eta^{\bar\mu,\bar\lambda}} + x^2\abs{\partial_x^{N+1}\eta^{0}} \big) ,\\
\abs{\check H_{N,5}} \le & C\bar\rho^{\gamma-1}(\abs{\xi}+\abs{x\xi_x})\abs{G_{N+1}^0}+ C(\abs{\xi}+\abs{x\xi_x})\abs{G_{N}^0} \\
&+ C\bar\rho^{\gamma-1}(\abs{\xi_x}+\abs{x\xi_{xx}})\abs{G_{N}^0}.
\end{align*}
Following an analogous argument as in \eqref{ineq-assumex}, we can derive
\begin{align*}
&\sup_{0\le\tau\le T} (\alpha^0)^{\min\{5-3\gamma, 0\}}\norm{\bar\rho^{(\gamma-1)N}G_{N+1}^0}{L^\infty}^2 \\
\le & C \sup_{0\le\tau\le T} (\alpha^0)^{\min\{5-3\gamma, 0\}} \int \bar\rho^{(\gamma-1)(2N+1)} (G_{N+2}^0)^2+ \bar\rho^{(\gamma-1)(2N-1)}  (G_{N+1}^0)^2\,dx\\
\le & C \sum_{i=N}^{N_0-2}\sup_{0\le\tau\le T} (\alpha^0)^{\min\{5-3\gamma, 0\}} \int \bar\rho^{(\gamma-1)(2N_0-3)} (G_{i+2}^0)^2\,dx\\
\le & C \mathcal{E}_{N_0}^0 ~(\text{with $r_1=\min\{3\gamma-4,1\}$ in $\mathcal{E}_{N_0}^0$ } )\le C\mathcal{E}_{in}  < \epsilon
\end{align*}
for $0 \le N \le N_0-2$. Similarly, we have
\begin{align*}
\sup_{0\le\tau\le T}(\alpha^0)^{\min\{5-3\gamma, 0\}}\norm{\bar\rho^{(\gamma-1)(N-1)}G_{N}^0}{L^\infty}^2 < \epsilon,        \\
\sup_{0\le\tau\le T} \norm{\bar\rho^{(\gamma-1)(N_0-\frac{5}{2})} x^2 (\partial_x^{N_0-1}\eta^{\bar\mu,\bar\lambda},\partial_x^{N_0-1}\eta^0)}{L^\infty}^2 < \epsilon,
\end{align*}  
for small $\mathcal{E}_{in}$ and $0 \le N \le N_0-2$. It then follows from integration by parts and the Hardy inequality that 
\begin{align*}
   \int_0^\tau \check L_2 \,d\tau' 
   \le & - \frac{A \gamma}{4}  (\alpha^0)^{4 - 3 \gamma + r_1}  \int \rhobar^{(\gamma-1)(N+1)+1} \check G_N^2 \, dx  \\
    & - \frac{A \gamma}{4}  \int_0^\tau ( -(\alpha^0)^{4 - 3 \gamma + r_1}  )_\tau \int \rhobar^{(\gamma-1)(N+1)+1}  \check G_N^2 \, dx d\tau' \\
    &+C \epsilon \int_0^\tau (\alpha^0)^{ 4 - 3 \gamma + r_1-\min\{(3\gamma-3)/2,1\}}  \int \rhobar^{(\gamma-1)(N+1)+1}  \check G_N^2 \, dx d\tau' \\
    &+C\int_0^\tau (\alpha^0)^{4-3\gamma+r_1} \Big\{\int \bar\rho^{(\gamma - 1 ) N + 1} (\check G_{N-1,\tau})^2\,dx \Big\}^{\frac{1}{2}} \\
    &\qquad\qquad\cdot\Big\{\int \bar\rho^{(\gamma - 1 ) N + 1} \big[(\check H_{N,2})^2+ (\check H_{N,5})^2 \big]\,dx \Big\}^{\frac{1}{2}}\,d\tau'\\
    & +C \int_0^\tau (\alpha^0)^{4 - 3 \gamma + r_1}  \rhobar^{(\gamma-1)(N+1)+1} \check G_N \check G_{N-1,\tau}|^{x=1}_{x=0} \,d\tau' ~~(\text{vanishes})\\
    \le & - \frac{A \gamma}{4}  (\alpha^0)^{4 - 3 \gamma + r_1} \int \rhobar^{(\gamma-1)(N+1)+1} \check G_N^2 \, dx  \\
    & - \frac{A \gamma}{4}  \int_0^\tau ( -(\alpha^0)^{4 - 3 \gamma + r_1} )_\tau \int \rhobar^{(\gamma-1)(N+1)+1}  \check G_N^2 \, dx d\tau' \\
    &+C \epsilon \int_0^\tau (\alpha^0)^{4 - 3 \gamma + r_1-\min\{(3\gamma-3)/2,1\}}  \int \rhobar^{(\gamma-1)(N+1)+1}  \check G_N^2 \, dx d\tau' \\
    & + C\int_0^\tau (\alpha^0)^{\max\{ (3-3\gamma)/2,~-1}\}\mathcal{\check E}_{N}\,d\tau'.
\end{align*}
Also,
\begin{align*}
&\int_0^\tau \check L_1 \,d\tau' \\
\le & C \int_0^\tau (\alpha^0)^{\frac{11-9\gamma}{2}+r_1} \int\bar\rho^{(\gamma-1)N+1}\big\{[x\partial_x^{N}+(N+1)\partial_x^{N-1}] \big(x(1+\eta^0+\eta^{\bar\mu,\bar\lambda})\xi\big)\big\}^2\,dx d\tau'\\
& + C\int_0^\tau (\alpha^0)^{\frac{3-3\gamma}{2}} \mathcal{\check E}_N \,d\tau',\\
\intertext{where}
&\abs{[x\partial_x^{N}+(N+1)\partial_x^{N-1}] \Big(x(1+\eta^0+\eta^{\bar\mu,\bar\lambda})\xi\Big)}\\
=&\mathcal{O}(1) \sum_{k=0}^2  x^{2-k} \Big[ \abs{\partial_x^{N-k}\xi} + \sum_{j=1}^{N-k} \Big(\abs{\partial_x^j\eta^0}+ \abs{\partial_x^j\eta^{\bar\mu,\bar\lambda}}\Big) \abs{\partial_x^{N-k-j}\xi} \Big]\\
=&\begin{cases}
  \mathcal{O}(1) \sum_{k=0}^2 x^{2-k} \Big[ \abs{\partial_x^{N-k}\xi} + \epsilon \abs{\partial_x^{N-k-1}\xi}  \\
  \qquad\qquad\qquad\quad+ \epsilon \sum_{j=2}^{N-k}\bar\rho^{(\gamma-1) ( -j+\frac{3}{2} )} \abs{\partial_x^{N-k-j}\xi}  \Big], & \text{if } 0 \le N \le N_0-3,\\
  \mathcal{O}(1) \sum_{k=0}^2 x^{2-k} \Big[ \abs{\partial_x^{N_0-k-2}\xi} + \epsilon \abs{\partial_x^{N_0-k-3}\xi}    \\
  \qquad\qquad\qquad\quad + \epsilon \sum_{j=2}^{N_0-k-3}\bar\rho^{(\gamma-1) ( -j+\frac{3}{2} )} \abs{\partial_x^{N_0-k-j-2}\xi}  \Big]\\
  \quad + \mathcal{O}(1)\epsilon\sum_{k=1}^2 x^{2-k}  \bar\rho^{(\gamma-1) ( -N_0+k+\frac{7}{2} )} \abs{\xi} \\
  \quad+ \mathcal{O}(1) x\bar\rho^{(\gamma-1) ( -N_0+\frac{7}{2} )}\abs{\xi}, & \text{if } N =N_0-2.
\end{cases}
\end{align*} 
Then the Hardy inequality shows that
\begin{align*}
\int_0^\tau \check L_1\,d\tau' \le & C \int_0^\tau (\alpha^0)^{\frac{11-9\gamma}{2}+r_1} \sum_{j=0}^{N} \int \bar\rho^{(\gamma-1)N+1} x^4 (\partial_x^{N-j}\xi)^2\,dxd\tau' \\
&+ C\int_0^\tau (\alpha^0)^{\frac{3-3\gamma}{2}} \mathcal{\check E}_N \,d\tau'\\
\le &  C\int_0^\tau (\alpha^0)^{\frac{3-3\gamma}{2}} \mathcal{\check E}_N \,d\tau'.\\
\intertext{A similar argument yields that}
\int_0^\tau \check L_3\,d\tau' \le& C \int_0^\tau (\alpha^0)^{2-3\gamma}\mathcal{\check E}_{N}d\tau'\\
&+C_T (2\bar\mu+\bar\lambda)^2 \int_0^\tau   (\alpha^{\bar\mu,\bar\lambda})^{4-6\gamma} (\alpha^0)^{-1+3\gamma+r_1}   \int \bar\rho^{(\gamma - 1) N + 1}\\
&\qquad\cdot \Big\{[x\partial_x^{N} + (N+1) \partial_x^{N-1}] \big\{  (\rhobar^\gamma \Qfrak_\tau^{\bar\mu,\bar\lambda})_x  \frac{(1 + \eta^{\bar\mu,\bar\lambda})^2}{\bar\rho} \Big\}\Big\}^2 \, dx \\
\le &C \int_0^\tau (\alpha^0)^{2-3\gamma}\mathcal{\check E}_{N}d\tau' +C_T (2\bar\mu+\bar\lambda)^2 \Big(\mathcal{D}_{N+1}^{\bar\mu,\bar\lambda}+\int_0^\tau \mathcal{E}_{N+1}^{\bar\mu,\bar\lambda}\,d\tau' \Big),\\
\int_0^\tau \check L_4\,d\tau' \le& C \int_0^\tau(\alpha^0)^{2-3\gamma}\mathcal{\check E}_{N}d\tau'\\
&+ C_T \bar\mu^2 \int_0^\tau  (\alpha^{\bar\mu,\bar\lambda})^{4-6\gamma} (\alpha^0)^{-1+3\gamma+r_1} \int \bar\rho^{(\gamma - 1) N + 1}\\
&\qquad\cdot \Big\{[x\partial_x^{N} + (N+1) \partial_x^{N-1}] \big\{  [ \rhobar^\gamma (\Qfrak^{\bar\mu,\bar\lambda}+1)]_x \frac{( 1 + \eta^{\bar\mu,\bar\lambda}) \eta_\tau^{\bar\mu,\bar\lambda} }{\bar\rho }\Big\}\Big\}^2 \, dx \\
\le &C \int_0^\tau (\alpha^0)^{2-3\gamma}\mathcal{\check E}_{N}d\tau' +C_T\bar\mu^2 \Big(\mathcal{D}_{N+1}^{\bar\mu,\bar\lambda}+\int_0^\tau \mathcal{E}_{N+1}^{\bar\mu,\bar\lambda}\,d\tau' \Big),\\
\int_0^\tau \check L_5\,d\tau' 
\le &  C_T (2\bar\mu+3\bar\lambda) \int_0^\tau (\alpha^0)^{1+r_1} \int \bar\rho^{ (\gamma - 1)N + 1}  \abs{\check G_{N-1,\tau}} \abs{ G_{N-1,\tau}^{\bar\mu,\bar\lambda}} \, dx d\tau' \\
\le &  C \int_0^\tau (\alpha^0)^{\frac{3-3\gamma}{2}} \mathcal{\check E}_N  \,d\tau' +  C_T (2\bar\mu+3\bar\lambda)^2  \int_0^\tau \mathcal{E}_N^{\bar\mu,\bar\lambda} \, d\tau', \\
\int_0^\tau \check L_6\,d\tau' 
\le &  C_T(2\bar\mu+3\bar\lambda) \int_0^\tau  [(\alpha^0)^{4-3\gamma+r_1} + (\alpha^{\bar\mu,\bar\lambda})^{2-3\gamma} (\alpha^0)^{1+r_1} ]   \int \bar\rho^{ (\gamma - 1)N + 1}  \abs{\check G_{N-1,\tau} }\\
& \qquad\cdot \abs{[x\partial_x^{N} + (N+1) \partial_x^{N-1}] \Big((1+\eta^{\bar\mu,\bar\lambda}) x\eta^{\bar\mu,\bar\lambda}\Big)} \, dx d\tau'\\
\le &  C \int_0^\tau (\alpha^0)^{\frac{3-3\gamma}{2}} \mathcal{\check E}_N  \,d\tau' + C_T (2\bar\mu+3\bar\lambda)^2 \int_0^\tau \mathcal{ E}_{N-1}^{\bar\mu,\bar\lambda}\,d\tau',\\
\int_0^\tau \check L_7\,d\tau'
\le &  C_T(2\bar\mu+3\bar\lambda) \int_0^\tau [(\alpha^0)^{4-3\gamma+r_1} + (\alpha^{\bar\mu,\bar\lambda})^{2-3\gamma} (\alpha^0)^{1+r_1} ]   \int \bar\rho^{ (\gamma - 1)N + 1}  \abs{\check G_{N-1,\tau} }\\
& \qquad\cdot \abs{[x\partial_x^{N} + (N+1) \partial_x^{N-1}] \left\{\frac{(1+\eta^{\bar\mu,\bar\lambda})^2}{\bar\rho} (\bar\rho^\gamma\mathfrak{Q}^{\bar\mu,\bar\lambda})_x \right\}} \, dx d\tau'\\
\le &  C \int_0^\tau (\alpha^0)^{\frac{3-3\gamma}{2}} \mathcal{\check E}_N  \,d\tau' + C_T (2\bar\mu+3\bar\lambda)^2 \int_0^\tau \mathcal{ E}_{N+1}^{\bar\mu,\bar\lambda}\,d\tau'.
\end{align*} 
Combining the above estimates, we conclude that
\begin{align*}
\mathcal{\check E}_{N}  \le C\int_0^\tau \alpha^{\max\{(3-3\gamma)/2,~-1\}}\mathcal{\check E}_{N}\,d\tau' + C_T(2\bar\mu+\bar\lambda)^2  \Big(\mathcal{D}_{N+1}^{\bar\mu,\bar\lambda}+  \int_0^\tau \mathcal{E}_{N+1}^{\bar\mu,\bar\lambda} \,d\tau' \Big).
\end{align*}
Noting \eqref{data-xi} and \eqref{ineq-total}, it then follows from Gr\"onwall's inequality that for any $0 \le N \le N_0-2$ and any $r_1\le \min\{3\gamma-4,1\}$
\begin{align*}
\mathcal{\check E}_{N} \le  C_T   (2\bar\mu+\bar\lambda)^2 \mathcal{E}_{in}.
\end{align*}
This finishes the proof.
\end{proof}

\vspace{0.5cm}

\noindent\textbf{Acknowledgements}.
This work is based on the Ph.D. thesis of the first author, conducted under the supervision of the second author at the Institute of Mathematical Sciences, the Chinese University of Hong Kong. Part of the work was done when the third author visited the Institute of Mathematical Sciences at The Chinese University of Hong Kong and the first author visited South China Normal University. They thank the both institutions for their support and hospitality.
The first author is supported by the Hong Kong PhD Fellowship Scheme from RGC of Hong Kong (No. PF20-48286). The second author is supported by the the General Research Fund from RGC of Hong Kong (Project Nos. 14304325 and 14301023). The third author is supported by the National Natural Science Foundation of China (No. 12471210).
\bibliographystyle{abbrv}
\bibliography{ns-affine}

\end{document}